\documentclass[11pt,a4paper,reqno]{amsart}

\usepackage{amsmath,amsfonts,amsthm,amsopn,amssymb,cite,mathrsfs}
\usepackage{cancel,marginnote}

\usepackage{color,enumitem,graphicx}
\usepackage[colorlinks=true,urlcolor=blue,
citecolor=red,linkcolor=blue,linktocpage,pdfpagelabels]{hyperref}
\usepackage[english]{babel}

\usepackage[left=2.8cm,right=2.8cm,top=2.8cm,bottom=2.8cm]{geometry}

\usepackage{showkeys}
\usepackage[hyperpageref]{backref}

\makeindex

\numberwithin{equation}{section}
\newtheorem{theorem}{Theorem}[section]
\newtheorem{lemma}{Lemma}[section]

\newtheorem{remark}{Remark}[section]

\newtheorem{definition}{Definition}[section]

\title[two critical exponents]
{Ground State Solutions for local-nonlocal Shr\"{o}dinger equations in the presence of two critical exponents}

\thanks{Yu Su is supported by the Natural Science Research Project of Anhui Educational Committee (Grant No. 2023AH040155).}

\subjclass[2010]{35J10; 35J20}
\keywords{Schr\"{o}dinger equation; Local and nonlocal operator; Lieb's translation theorem; Existence; Regularity}

\begin{document}
\maketitle

\centerline{\scshape Yu Su$^{1}$, Hichem Hajaiej$^{2,*}$, Hongxia Shi$^{3}$,~~}
\medskip
{\footnotesize

\centerline{1. School of Mathematics and Big Data, Anhui University of Science and Technology,}
\centerline{Huainan, Anhui 232001, China}
\centerline{yusumath@qq.com}

\centerline{2. Department of Mathematics, California State University,}
\centerline{Los Angeles, California, USA}
\centerline{hhajaie@calstatela.edu}

\centerline{3. 
School of Mathematics and Computational Science, Hunan First Normal University,}
\centerline{Changsha, Hunan 410205, China}
\centerline{shihongxia5617@163.com}

}

\begin{abstract}
In this paper,
we address the existence of ground state solutions for Schr\"{o}dinger equations in the presence of local and nonlocal operators and two critical nonlinearities associated with each operator.
The situation is completely solved in the critical case  both in the local and in the nonlocal settings. 
However,
methods developed in these cases cannot extend to local-nonlocal operators when we have two critical power nonlinearities.
This unprecedented situation in PDEs presents some challenges, and its resolution will open the door to solve similar problems.
Our approach is essentially based on a subtle generalization of the Lieb translation theorem.
We will also discuss the positivity of the ground states as well as their regularity.
\end{abstract}

\section{Introduction}
We study the following  Schr\"{o}dinger equation with the local-nonlocal operator
\begin{equation}\label{S}
\begin{aligned}
-\lambda\Delta u
+\mu(-\Delta)^{s}u=f(u), \ \ x\in \mathbb{R}^{N},
\end{aligned}
\tag{$S_{\lambda,\mu}$}
\end{equation}
where $N\geqslant3$ and  $0<s<1$.
And $(-\Delta)^{s}$ is the so-called fractional
Laplacian, which can be defined, for any $u:\mathbb{R}^{N}\to\mathbb{R}$ smooth enough, by setting
$
\mathcal{F}((-\Delta)^{s}u)(\xi)
=
|\xi|^{2s}
\mathcal{F}(u)(\xi)
$,
$\xi\in\mathbb{R}^{N}$,
where $\mathcal{F}$ represents the Fourier transform, see \cite{Nezza-Palatucci-Valdinoci2012BSM}.

For $\lambda,\mu>0$,
equation \eqref{S} arises in the population dynamics model with both classical and nonlocal diffusion is discussed by Dipierro-Lippi-Valdinoci \cite{Dipierro-Lippi-Valdinoci2022AIHP}, which is motivated by the biologically relevant situation of a population following long-jump foraging patterns alternated with focused searching strategies at small scales.
Moreover, Biagi-Dipierro-Valdinoci-Vecchi \cite{Biagi-Dipierro-Valdinoci-Vecchi2021CPDE} pointed out that equation \eqref{S} can apply to study the different types of ``regional" or ``global" restrictions that reduce the spreading of a pandemic disease.
Furthermore,
Dipierro-Valdinoci \cite{Dipierro-Valdinoci2021PA} introduced \eqref{S} in the description of an ecological niche for a mixed local and nonlocal dispersal.
For more applications,
we refer the reader to Bernstein-type regularity results
\cite{Cabre-Dipierro-Valdinoci2022ARMA},
the Aubry-Mather theory for sums of different fractional Laplacians
\cite{de la Llave-Valdinoci2009Poincare}, numerics\cite{Biswas-Jakobsen-Karlsen2010SIAM-J-N-A},
probability and stochastics
\cite{Mimica2016PLMS,ChenZQ-Kim-Song-Vondracek2012TAMS}.

For $\lambda=1$ and $\mu=0$,
equation \eqref{S} becomes the classical nonlinear Schr\"{o}dinger equation 
\begin{equation}\label{S1}
\begin{aligned}
-\Delta u=f(u), \ \ x\in \mathbb{R}^{N},
\end{aligned}
\tag{$S$}
\end{equation}
which arises in quantum mechanics and quantum field theory.
We recall that the Sobolev critical exponent is defined by
\begin{equation*}
\begin{aligned}
2^{*}:=\frac{2N}{N-2}.
\end{aligned}
\end{equation*}
For subcritical nonlinearities,
there are two classical famous conditions to ensure the existence of solutions:
the Ambrosetti-Rabinowitz type conditions (in  short for A-R conditions) and Berestycki-Lions type conditions (in short for B-L conditions).
In 1973,
Ambrosetti-Rabinowitz \cite{Ambrosetti-Rabinowitz1973JFA} showed the existence of the solutions \textcolor{red}{to} equation \eqref{S1} with A-R conditions.
In 1983,
for the subcritical case,
Berestycki-Lions \cite{Berestycki-Lions1983ARMA} showed that B-L conditions are almost necessary and sufficient condition for the existence of ground state solution of equation \eqref{S1}.
Moreover,
the uniqueness result was studied in \cite{Serrin-M.X.Tang2000IUMJ}.
For the critical case $f(u)=|u|^{2^{*}-2}u$,
Aubin \cite{Aubin1976JDG} and Talenti \cite{Talenti1976AMPA} considered the existence result and the exact formula of radially symmetric positive ground state solution.
For more related works,
we refer the reader to  \cite{lions1984Pioncare}.

For $\lambda=0$ and $\mu=1$,
equation \eqref{S} becomes the fractional Schr\"{o}dinger equation 
\begin{equation}\label{FS}
\begin{aligned}
(-\Delta)^{s} u=f(u), \ \ x\in \mathbb{R}^{N}.
\end{aligned}
\tag{$FS$}
\end{equation}
When $s\in(\frac{1}{2},1)$,
it is a fractional Schr\"{o}dinger equation which describes the energy and the momentum of non-relativistic fractional quantum-mechanical particle \cite{Laskin2000PLA,Laskin2002PRE}. 
The fractional Sobolev critical exponent is defined as follows
\begin{equation*}
\begin{aligned}
2_{s}^{*}:=\frac{2N}{N-2s}.
\end{aligned}
\end{equation*}
For the subcritical cases (A-R conditions and B-L conditions), the existence of solution for equation \eqref{FS} follows from variational arguments.
Caffarelli-Silvestre \cite{Caffarelli-Silvestre2007PDE} developed a powerful extension method that transfer the nonlocal problem \eqref{FS} into a local one on a half-space.
Motivated by numerous and various applications, a lot of significant advances have been made.
See, e.g.,
\cite{Hajaiej2013JMAA,Hajaiej2013Poincare,Hajaiej2014AA} for the existence
and symmetry of minimizers,
and
\cite{Cho-Hajaiej-Hwang-Ozawa2013FE,Cho-Hajaiej-Hwang-Ozawa2014CPAA} for the Cauchy problem.
For the critical case,
Lieb \cite{Lieb1983AM} and Cotsiolis-Tavoularis \cite{Cotsiolis-Tavoularis2004JMAA} considered the existence result and the exact formula of radially symmetric positive ground state solution. 
For more related works, we refer the reader to \cite{Frank-Lenzmann2013Acta,Dipierro-Montoro-Peral-Sciunzi2016CVPDE}.

Luo-Hajaiej \cite{LuoTJ-Hajaiej2022ANS} considered the following  nonlinear Schr\"odinger equations with mixed fractional Laplacians ($0<s_{1}<s_{2}\leqslant1$)
\begin{equation}\label{MFS}
\begin{aligned}
(-\Delta)^{s_{1}} u
+(-\Delta)^{s_{2}}u+\lambda u=|u|^{p-2}u, \ \ x\in \mathbb{R}^{N},
\end{aligned}
\tag{$MFS$}
\end{equation}
When $s_{2}=1$ and $\lambda=0$,
then equation \eqref{MFS} reduces to equation \eqref{S}.
For $\lambda$ unknown,
there are two mass critical exponents
\begin{equation*}
\begin{aligned}
2+\frac{s_{1}}{N}~~\mathrm{and}~~2+\frac{s_{2}}{N}.
\end{aligned}
\end{equation*}
Luo-Hajaiej \cite{LuoTJ-Hajaiej2022ANS} investigated the existence of ground state normalized solutions for $p\in (2,2+\frac{s_{1}}{N})$,
$p=2+\frac{s_{1}}{N}$
and
$p\in (2+\frac{s_{1}}{N},2+\frac{s_{2}}{N})$ with some suitable $L^{2}$ norm,
and the nonexistence of ground state normalized solutions for $p=2+\frac{s_{2}}{N}$.
For $p\in(2+\frac{s_{2}}{N},\frac{2N}{N-2s_{2}})$,
Chergui-Gou-Hajaiej \cite{Chergui-Gou-Hajaiej2023CVPDE} considered the existence and dynamics  results for equation \eqref{MFS}.
Su-Valdinoci-Wei-Zhang
\cite{Su-Valdinoci-Wei-Zhang2022MZ,Su-Valdinoci-Wei-Zhang2025JDE}
and Dipierro-Su-Valdinoci-Zhang
\cite{Dipierro-Su-Valdinoci-Zhang2025DCDS} show the regularity results of equation \eqref{MFS} with $s_{2}=1$.
Biagi-Dipierro-Valdinoci-Vecchi
\cite{Biagi-Dipierro-Valdinoci-Vecchi2025NoDEA}
established a Brezis-Nirenberg type result for $s_{2}=1$. 
For more related works, we also refer to \cite{ChenHY-Bhakta-Hajaiej2022JDE,Hajaiej-Perera2022DIE,Bahrouni-Guo-Hajaiej2025AA,Giovannardi-Mugnai-Vecchi2023JMAA,Cangiotti-Caponi-Maione-Vitillaro2024FFAC,Cangiotti-Caponi-Maione-Vitillaro2023Milan}.

We recall the case $\lambda=\mu=1$ and  $f(u)=|u|^{p-2}u$. Then equation \eqref{S} is
\begin{equation}\label{P}
\begin{aligned}
-\Delta u
+(-\Delta)^{s}u =|u|^{p-2}u, \ \ x\in\mathbb{R}^{N}.
\end{aligned}
\tag{$P$}
\end{equation}

\begin{theorem}{\rm 
\cite{BaoQQ2024Malaysian,Chergui-Gou-Hajaiej2023CVPDE}}\label{Theorem1.1}
Let $N\geqslant3$ and $0<s<1$.
Then we have the following results.

\begin{enumerate}
\item [(i)] If  $p=2_{s}^{*}$ or $p=2^{*}$,
then equation \eqref{P} has no non-trivial solution.

\item [(ii)] If $p\in (2_{s}^{*},2^{*})$, then equation \eqref{P} has a radial ground state solution.
\end{enumerate}

\end{theorem}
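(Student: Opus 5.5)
The statement splits into a nonexistence part (i) and an existence part (ii). For (i) I would use Pohozaev-type identities; for (ii) I would use constrained minimization in the natural energy space.

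\emph{Part (i).} Set $E:=\{u\in D^{1,2}(\mathbb{R}^N): \|\nabla u\|_2^2+[u]_s^2<\infty\}$, where $[u]_s^2=\|(-\Delta)^{s/2}u\|_2^2$. Any (weak) solution $u\in E$ of \eqref{P} satisfies the Nehari identity
\begin{equation*}
\|\nabla u\|_2^2+[u]_s^2=\|u\|_p^p .
\end{equation*}
Next I would test against $x\cdot\nabla u$, or equivalently differentiate $t\mapsto I(u(\cdot/t))$ at $t=1$. This gives the Pohozaev identity
\begin{equation*}
\frac{N-2}{2}\|\nabla u\|_2^2+\frac{N-2s}{2}[u]_s^2=\frac{N}{p}\|u\|_p^p .
\end{equation*}
If $p=2^*$, then $N/p=(N-2)/2$, and subtracting $\frac{N-2}{2}$ times the Nehari identity leaves $\frac{N-2s}{2}[u]_s^2-\frac{N-2}{2}[u]_s^2=(1-s)[u]_s^2=0$. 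Hence $u\equiv0$. If $p=2_s^*$, the analogous subtraction gives $(s-1)\|\nabla u\|_2^2=0$, so again $u\equiv0$.

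The delicate step in (i) is justifying Pohozaev. This requires enough regularity and decay, for instance $u\in W^{2,q}_{loc}$ and $x\cdot\nabla u$ admissible after truncation. I would first prove regularity by a Brezis--Kato/Moser iteration, treating $(-\Delta)^s$ as a lower-order perturbation. Then I would apply the identity on cutoffs $\varphi(x/R)\,x\cdot\nabla u$ and let $R\to\infty$. For the nonlocal term I would use the Fourier-side formula: scaling gives exactly the factor $(N-2s)/2$.

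\emph{Part (ii).} For $2_s^*<p<2^*$, the interpolation/Sobolev inequalities give $\|u\|_p\le C\|u\|_{2_s^*}^{\theta}\|u\|_{2^*}^{1-\theta}\le C\|u\|_E$, so $E\hookrightarrow L^p$ continuously. I would minimize
\begin{equation*}
S_p=\inf\{\|\nabla u\|_2^2+[u]_s^2:\ u\in E,\ \|u\|_p=1\}.
\end{equation*}
By P\'olya--Szeg\H{o} for both the gradient and the fractional seminorm, the infimum may be taken over radial nonincreasing functions.

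For such a minimizing sequence, the Strauss-type decay bound $|u(x)|\le C|x|^{-(N-2)/2}\|\nabla u\|_2$ holds. Combined with boundedness in $L^{2_s^*}$ and $L^{2^*}$, it yields compactness of radial functions in $L^p$ for $2_s^*<p<2^*$. Both endpoints are excluded, which is exactly why the open interval is needed. Here I interpolate: tails are small in $L^p$ by the decay bound and the $L^{2_s^*}$ bound, and local compactness follows from Rellich.

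This compactness is the main obstacle. Noncompactness can occur through concentration (governed by the $2^*$ scale) and through vanishing/dilation to large scales (governed by the $2_s^*$ scale). Since the norm is not homogeneous under dilations, I would make sure both are ruled out. An alternative is concentration-compactness: the strict inequality in $p$ prevents loss of mass at either scale, because the $L^p$ norm is controlled by $\|u\|_{2^*}^{1-\theta}\|u\|_{2_s^*}^{\theta}$ and each factor stays bounded.

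Once the weak limit attains $S_p$, a Lagrange multiplier and rescaling $u\mapsto cu$ (exploiting $p>2$ homogeneity) give a solution of \eqref{P}. This solution attains the mountain-pass/Nehari level, so it is a radial ground state.
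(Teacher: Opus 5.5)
The paper gives no proof of this theorem. It is quoted from \cite{BaoQQ2024Malaysian,Chergui-Gou-Hajaiej2023CVPDE}, so your outline has to stand on its own, and it does. Nehari plus Pohozaev at the endpoints, and minimizing the homogeneous quotient $S_p$ over radial functions for $2_{s}^{*}<p<2^{*}$, is the standard route, and your details check out.

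\begin{itemize}
\item The subtractions correctly give $(1-s)[u]_s^2=0$ for $p=2^{*}$ and $(s-1)\|\nabla u\|_2^2=0$ for $p=2_{s}^{*}$.
\item In (ii), the radial bound gives $\int_{|x|>R}|u_n|^p\,\mathrm{d}x\leqslant CR^{-(N-2)(p-2_{s}^{*})/2}\sup_n\|u_n\|_{2_{s}^{*}}^{2_{s}^{*}}$. Rellich handles balls since $p<2^{*}$. Monotonicity is not even needed for the Strauss bound.
\item The rescaled minimizer $v=S_p^{1/(p-2)}u$ has energy $(\frac12-\frac1p)S_p^{p/(p-2)}$. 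This is a lower bound for the energy on the whole Nehari set of \eqref{P}, so $v$ is a ground state.
\end{itemize}

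The closest comparison in the paper is its own existence result, Theorem \ref{Theorem1.2}. There the paper uses the Nehari manifold, Ekeland's principle, a translation lemma, the threshold $c<c^{*}$, and then rearrangement. Your direct minimization is much lighter because it exploits two features of a single pure power strictly between the critical exponents. Homogeneity lets you scale the Lagrange multiplier away, and radial functions of $X$ embed compactly in $L^{p}$. Both are lost once $|u|^{2_{s}^{*}-2}u$ and $|u|^{2^{*}-2}u$ are present, which is why the paper needs that machinery there.

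Two points need tightening.

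\emph{Pohozaev in (i).} The Fourier identity $\langle(-\Delta)^{s}u,x\cdot\nabla u\rangle=-\frac{N-2s}{2}[u]_s^2$ presupposes that $x\cdot\nabla u$ lies in the energy space. Your regularity step (Brezis--Kato/Moser, then $W^{2,q}$ bounds) says nothing about decay at infinity. Once you insert $\varphi(x/R)$, the Fourier computation no longer applies as stated, and you must control the commutator of $(-\Delta)^{s}$ with the cutoff. The clean fix is the Caffarelli--Silvestre extension: test $\mathrm{div}(y^{1-2s}\nabla U)=0$ with $\varphi(|(x,y)|/R)\,(x,y)\cdot\nabla U$. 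The truncation errors are then bounded by $C\int_{B_{2R}^{+}\setminus B_{R}^{+}}y^{1-2s}|\nabla U|^2$, which tends to $0$ by finite energy alone. Pairing this with the local terms under the same cutoff yields Pohozaev with no decay assumption on $u$.

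\emph{The concentration-compactness aside.} The reason you give is not valid: boundedness of both factors in $\|u\|_{2^{*}}^{1-\theta}\|u\|_{2_{s}^{*}}^{\theta}$ rules out nothing. The actual mechanism is as follows. Concentrating dilates $\lambda^{(N-2)/2}\phi(\lambda\,\cdot)$ with $\lambda\to\infty$ have $L^{2_{s}^{*}}$ norm tending to $0$. Spreading dilates $\lambda^{(N-2s)/2}\phi(\lambda\,\cdot)$ with $\lambda\to0$ have $L^{2^{*}}$ norm tending to $0$. Since $\theta\in(0,1)$ strictly, the $L^p$ norm vanishes in both cases. The radial argument you actually use does not depend on this, so simply drop the aside.
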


\begin{remark}
From Theorems \ref{Theorem1.1}, it follows that $2_{s}^{*}$ and $2^{*}$ are the lower and upper critical exponents,
respectively.
This is a new phenomenon for PDEs that requires new tools we will develop in this paper.
\end{remark}

More precisely,
we will investigate the existence of solutions to equation \eqref{S} in the presence of the lower and upper critical exponents.

Namely,
we will focus on the nonlinearity:
\begin{equation*}
\begin{aligned}
f(u)=|u|^{2_{s}^{*}-2}u+\beta|u|^{p-2}u+|u|^{2^{*}-2}u
\end{aligned}
\end{equation*}
with $p\in(2_{s}^{*},2^{*})$, then equation \eqref{S} is
\begin{equation}\label{D}
\begin{aligned}
-\Delta u
+(-\Delta)^{s}u =|u|^{2_{s}^{*}-2}u+\beta|u|^{p-2}u+|u|^{2^{*}-2}u, \ \ x\in\mathbb{R}^{N}.
\end{aligned}
\tag{$D$}
\end{equation}
\begin{theorem}\label{Theorem1.2}
Let $N\geqslant3$, $0<s<1$ and $p\in(2_{s}^{*},2^{*})$. Then there exists $\bar{\beta}\in(0,+\infty)$ such that for any $\beta>\bar{\beta}$, equation \eqref{D} has a radial ground state solution.
\end{theorem}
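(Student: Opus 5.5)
We would work variationally in the space $E$ obtained as the completion of $C_c^\infty(\mathbb{R}^N)$ under
\[
\|u\|^2:=\|\nabla u\|_2^2+\|(-\Delta)^{s/2}u\|_2^2 .
\]
By the Sobolev and fractional Sobolev inequalities, $E$ embeds continuously into $L^{2^*}$ and into $L^{2_s^*}$, hence by interpolation into $L^q$ for every $q\in[2_s^*,2^*]$. The energy functional is
\[
I(u)=\tfrac12\|u\|^2-\tfrac1{2_s^*}\|u\|_{2_s^*}^{2_s^*}-\tfrac{\beta}{p}\|u\|_p^p-\tfrac1{2^*}\|u\|_{2^*}^{2^*}.
\]
It has mountain pass geometry, since all three powers are superquadratic. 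Let $c_\beta$ be the mountain pass level, which we would also identify with the infimum over the Nehari manifold (or over the Pohozaev manifold), so that any critical point at level $c_\beta$ is a ground state. We restrict to radial functions, which is justified by Schwarz symmetrization: Pólya–Szegő holds for both seminorms, so the level is unchanged. A Palais–Smale sequence at $c_\beta$ is then bounded. To see this, use $I(u_n)-\frac1p I'(u_n)u_n$ together with the fact that $p>2$ and that the critical exponents enter with coefficients $\frac12-\frac1{2^*_s}$ and $\frac12-\frac1{2^*}$ relative to $\frac1p$, after handling signs. The ordering $2_s^*<p<2^*$ gives $\frac1{2^*}<\frac1p<\frac1{2_s^*}$, so $\frac1p-\frac1{2_s^*}<0$, and the $2_s^*$ term must be controlled differently. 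Instead we would build a Cerami/PS sequence carrying the Pohozaev information via Jeanjean's monotonicity trick, or via the augmented functional $(t,u)\mapsto I(e^{t}u(e^{t}\cdot))$.

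\textbf{Main obstacle: compactness.} A bounded radial PS sequence can lose mass in two ways. It can concentrate, as a $2^*$ bubble at a point, or vanish by spreading. The spreading profile is governed by the $2_s^*$ part, since dilations $u(x/R)$ with $R\to\infty$ make $\|\nabla u\|_2$ negligible relative to the fractional term. Radial symmetry does not prevent either loss, because both are scaling losses and not translation losses. We would prove a profile decomposition in $E$, obtained by adapting Lieb's translation lemma ("if $u_n\rightharpoonup0$ but $\|u_n\|_{q}\not\to0$, then after a suitable dilation and translation the weak limit is nonzero") to two scales. Concretely, consider any sequence bounded in $E$ with $\liminf\|u_n\|_p>0$ for the intermediate exponent $p$. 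Since $\|u_n\|_p^p$ is controlled by interpolation between $\|u_n\|_{2_s^*}$ and $\|u_n\|_{2^*}$, the sequence cannot concentrate to a point or spread to infinity without killing $\|u_n\|_p$. The reason is that concentration kills the $2^*_s$ norm, spreading kills the $2^*$ norm, and both kill $L^p$ via the mixed Lieb-type inequality. Hence, up to translation (trivial for radial functions), $u_n\rightharpoonup u\neq0$.

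The remaining issue is a threshold: we must rule out the weak limit carrying less energy than $c_\beta$ while bubbles carry the rest. Each pure bubble costs at least
\[
\tfrac1N S^{N/2}\quad\text{(for the $2^*$ part)},\qquad \tfrac sN S_s^{N/(2s)}\quad\text{(for the $2_s^*$ part)},
\]
where $S$ and $S_s$ are the best Sobolev constants. These are the levels of the limit problems, which by Theorem~\ref{Theorem1.1}(i) and the scaling have no better competitors. Here the largeness of $\beta$ enters. Choosing a fixed radial $\varphi$ and testing $\max_t I(t\varphi)$, we get $c_\beta\le C\beta^{-2/(p-2)}\to0$ as $\beta\to\infty$. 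So there is $\bar\beta$ with
\[
c_\beta<\min\Bigl\{\tfrac1N S^{N/2},\ \tfrac sN S_s^{N/(2s)}\Bigr\}\quad\text{for }\beta>\bar\beta.
\]
Below this level, the Brezis–Lieb splitting shows that no bubble can split off, and that the weak limit $u$ is a critical point with $I(u)\le c_\beta$. Therefore $u$ is a ground state, and after replacing it by $|u|$ and symmetrizing it is radial. The step we expect to be hardest is the two-scale Lieb-type nonvanishing lemma: proving that a bounded sequence in $E$ with $\|u_n\|_p\not\to0$ has a nontrivial weak limit modulo translation alone, with no rescaling, precisely because of the intermediate exponent. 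We would attempt this through a refined Sobolev inequality of the form
\[
\|u\|_p\le C\|u\|^{\theta}\Bigl(\sup_{R,y}R^{-\gamma}\|u\|_{L^1(B_R(y))}\Bigr)^{1-\theta},
\]
and then show that for the mixed norm only $R\sim1$ can be extremal.
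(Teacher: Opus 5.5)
Your overall architecture is the paper's. You identify the mountain-pass level with the Nehari infimum. A test-function bound (your $c_\beta\leqslant C\beta^{-2/(p-2)}$, cleaner than Lemma~\ref{Lemma5.2}) puts $c_\beta$ below $c^{*}=\min\{\tfrac{s}{N}S_{s}^{N/(2s)},\tfrac{1}{N}S_{1}^{N/2}\}$. Then come a nonzero weak limit, lower semicontinuity, and finally $|u|$ and Schwarz symmetrization.

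The real difference is the nonvanishing step. The paper proves that $\|u_{n}\|_{2_{s}^{*}}$ and $\|u_{n}\|_{2^{*}}$ both stay away from zero, since each vanishing alone forces $c\geqslant c^{*}$. It then invokes the two-scale Theorem~\ref{Theorem3.2}. You aim instead for $\liminf\|u_{n}\|_{p}>0$ and the one-scale Lieb lemma at the intermediate exponent (Theorem~\ref{Theorem3.1}).

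Your choice is the sound one. Let your two loss mechanisms occur simultaneously: take $u_{n}=n^{(N-2)/2}\varphi(nx)+n^{-(N-2s)/2}\varphi(x/n)$ with $0\neq\varphi\in C_{0}^{\infty}(\mathbb{R}^{N})$. This sequence is bounded in $X$ and its $L^{2_{s}^{*}}$ and $L^{2^{*}}$ norms stay bounded below. Yet it tends to $0$ in $L^{2}_{loc}(\mathbb{R}^{N})$ after every translation, so the conclusion of Theorem~\ref{Theorem3.2} fails for it. Only the $L^{p}$ norm detects a profile at unit scale.

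The step you call hardest is actually standard. Suppose $\sup_{z}\int_{B(z,1)}|u_{n}|^{p}\,\mathrm{d}x\to0$ and split $u_{n}$ at frequency $|\xi|=1$. The low part is bounded in $L^{2_{s}^{*}}$, and its sup norm is dominated by $C\sup_{z}\|u_{n}\|_{L^{1}(B(z,1))}\to0$; since $p>2_{s}^{*}$, it tends to $0$ in $L^{p}$. The high part is bounded in $H^{1}(\mathbb{R}^{N})$ and locally vanishing, so Lions' lemma applies.

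Boundedness of Palais--Smale sequences likewise needs no Jeanjean or Pohozaev device. Use the multiplier $\tfrac{1}{2_{s}^{*}}$ (the smallest exponent) rather than $\tfrac{1}{p}$. Then $I(u_{n})-\tfrac{1}{2_{s}^{*}}\langle I'(u_{n}),u_{n}\rangle=\tfrac{s}{N}\|u_{n}\|_{X}^{2}+\beta(\tfrac{1}{2_{s}^{*}}-\tfrac{1}{p})\|u_{n}\|_{p}^{p}+(\tfrac{1}{2_{s}^{*}}-\tfrac{1}{2^{*}})\|u_{n}\|_{2^{*}}^{2^{*}}$ has only positive coefficients, as in Lemma~\ref{Lemma5.1}.

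The gap is that you never verify $\liminf\|u_{n}\|_{p}>0$ for your Palais--Smale sequence. Your nonvanishing lemma assumes it. Meanwhile you spend the threshold $c_{\beta}<c^{*}$ on another task, stopping bubbles from splitting off a nonzero weak limit. That task is unnecessary. Once $u_{n}\rightharpoonup u\not\equiv0$, weak continuity of $I'$ gives $I'(u)=0$, hence $u\in\mathcal{N}$ and $I(u)\geqslant c_{\beta}$. Weak lower semicontinuity of $v\mapsto I(v)-\tfrac{1}{2_{s}^{*}}\langle I'(v),v\rangle$ gives $I(u)\leqslant c_{\beta}$.

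The threshold is needed exactly to exclude $\|u_{n}\|_{p}\to0$, that is, the two-profile scenario above. A Brezis--Lieb splitting around a zero weak limit says nothing about that scenario. Here is one way to close it.

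Suppose $\|u_{n}\|_{p}\to0$. Pass to a subsequence with $\|u_{n}\|_{D^{s,2}(\mathbb{R}^{N})}^{2}\to a$, $\|u_{n}\|_{D^{1,2}(\mathbb{R}^{N})}^{2}\to b$, $\|u_{n}\|_{2_{s}^{*}}^{2_{s}^{*}}\to A$ and $\|u_{n}\|_{2^{*}}^{2^{*}}\to B$.
\begin{itemize}
\item From $\langle I'(u_{n}),u_{n}\rangle\to0$ you get $a+b=A+B$.
\item From $I(u_{n})\to c_{\beta}>0$ you get $c_{\beta}=\tfrac{s}{N}A+\tfrac{1}{N}B$, so $A+B>0$.
\item Hence either $A\geqslant a$ and $A>0$, or $B\geqslant b$ and $B>0$. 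Indeed, if $A<a$ then $B>b\geqslant0$; if $A\geqslant a$ and $A=0$ then $a=0$ and $B=b>0$.
\item In the first case Lemma~\ref{Lemma2.3} gives $a\leqslant A\leqslant S_{s}^{-2_{s}^{*}/2}a^{2_{s}^{*}/2}$ with $a>0$. Hence $a\geqslant S_{s}^{N/(2s)}$ and $c_{\beta}\geqslant\tfrac{s}{N}A\geqslant\tfrac{s}{N}S_{s}^{N/(2s)}$.
\item In the second case the same computation with $S_{1}$ gives $c_{\beta}\geqslant\tfrac{1}{N}S_{1}^{N/2}$.
\end{itemize}
Both cases contradict $c_{\beta}<c^{*}$.

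Add this step to your proof. Also, when you pass to $t|u|$ and its rearrangement, these are only Nehari minimizers; the natural-constraint argument of Lemma~\ref{Lemma4.5} shows they are critical points. With these additions your proof is complete.
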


Moreover, we establish an $L^{\infty}$ estimate result of non-negative solutions for
\begin{equation}\label{F}
\begin{aligned}
-\Delta u
+(-\Delta)^{s}u=f(u), \ \ x\in\mathbb{R}^{N},
\end{aligned}
\tag{$F$}
\end{equation}
where the nonlinearity $f(\cdot)$ satisfies the following condition
\begin{enumerate}
\item [$(F_{1})$] There exists $C>0$ such that for every $t\in \mathbb{R}$ such that
\begin{equation*}
\begin{aligned}
|f(t)t|\leqslant C(t^{2_{s}^{*}}+t^{2^{*}}).
\end{aligned}
\end{equation*}
\end{enumerate}

\begin{theorem}\label{Theorem1.3}
Let $N\geqslant3$, $0<s<1$ and $(F_{1})$ hold. Then the non-negative solutions of equations \eqref{F} are in $L^{\infty}(\mathbb{R}^{N})$.
\end{theorem}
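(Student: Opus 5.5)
The plan is a Moser iteration (Brezis--Kato type argument) in the natural energy space
\[
\mathcal{D}:=\Big\{u\in L^{2^{*}}(\mathbb{R}^{N}) : \|\nabla u\|_{2}^{2}+[u]_{s}^{2}<\infty\Big\},
\]
which by the Sobolev inequalities embeds into $L^{2_{s}^{*}}\cap L^{2^{*}}$. Let $u\geqslant 0$ be a weak solution of \eqref{F}. By $(F_{1})$ we have, for $t\geqslant0$,
\[
|f(t)|\leqslant C\big(t^{2_{s}^{*}-1}+t^{2^{*}-1}\big)=a(x)\,t ,\qquad a:=C\big(u^{2_{s}^{*}-2}+u^{2^{*}-2}\big).
\]
Here $u^{2^{*}-2}\in L^{N/2}$. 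Also $u^{2_{s}^{*}-2}\in L^{N/(2s)}$, since $(2_{s}^{*}-2)\frac{N}{2s}=2_{s}^{*}$. The two pieces will be handled with the two different Sobolev inequalities available from the two parts of the operator.

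For $\beta\geqslant1$ and $T>0$, I test the equation with $\varphi=u\,u_{T}^{2(\beta-1)}$, where $u_{T}=\min\{u,T\}$; this $\varphi$ lies in $\mathcal{D}$. The local term gives the standard lower bound
\[
\int\nabla u\cdot\nabla\varphi\geqslant \frac{c}{\beta}\int|\nabla(u\,u_{T}^{\beta-1})|^{2}.
\]
For the nonlocal term I use the convexity inequality: for $\Phi$ convex and Lipschitz with $\Phi(0)=0$, the pointwise bound $(a-b)(\Phi(a)-\Phi(b))\geqslant(\Psi(a)-\Psi(b))^{2}$ holds with $\Psi'=\sqrt{\Phi'}$. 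This shows the fractional bilinear form is \emph{nonnegative}, so it can simply be dropped if desired, or kept to control $[u\,u_{T}^{\beta-1}]_{s}^{2}$. Writing $w=u\,u_{T}^{\beta-1}$, this gives
\[
\|\nabla w\|_{2}^{2}+[w]_{s}^{2}\leqslant C\beta\int a\,w^{2}.
\]

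The main obstacle is absorbing the right-hand side at the first step, because $a$ is critical for both pieces. I would split each part of $a$ as $a=a\chi_{\{u>K\}}+a\chi_{\{u\leqslant K\}}$.

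The critical local piece is bounded by Hölder and Sobolev:
\[
\int u^{2^{*}-2}\chi_{\{u>K\}}w^{2}\leqslant \|u^{2^{*}-2}\chi_{\{u>K\}}\|_{N/2}\,\|w\|_{2^{*}}^{2}\leqslant S^{-1}\varepsilon(K)\,\|\nabla w\|_{2}^{2}.
\]
Here $\varepsilon(K)\to0$ as $K\to\infty$, so for a fixed $\beta$ this piece is absorbed by the gradient term.

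The fractional-critical piece is treated the same way:
\[
\int u^{2_{s}^{*}-2}\chi_{\{u>K\}}w^{2}\leqslant\|u^{2_{s}^{*}-2}\chi_{\{u>K\}}\|_{N/(2s)}\,\|w\|_{2_{s}^{*}}^{2}\leqslant S_{s}^{-1}\varepsilon'(K)\,[w]_{s}^{2},
\]
which is absorbed by $[w]_{s}^{2}$. This is why I keep the nonlocal term rather than dropping it.

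The bounded parts contribute at most $C K^{2^{*}-2}\int u^{2\beta}+C K^{2_{s}^{*}-2}\int u^{2\beta}$. These are finite provided $u\in L^{2\beta}$, which holds for $\beta$ with $2\beta\in[2_{s}^{*},2^{*}]$, i.e.\ $\beta=2_{s}^{*}/2>1$. Letting $T\to\infty$ then gives $u\in L^{\beta 2^{*}}$ with $\beta 2^{*}>2^{*}$. Hence $a$ now lies in $L^{q}$ for some $q>N/2$, up to splitting the region $\{u\leqslant1\}$ (where $a\leqslant 2C$ and we use $u^{2\beta}\in L^{1}$) from $\{u>1\}$ (where $a\leqslant 2Cu^{2^{*}-2}\in L^{q}$ with $q>N/2$).

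The remaining step is the standard Moser iteration. Drop the fractional term and keep $\|\nabla w\|_{2}^{2}$. Then Hölder with $a\in L^{q}$, $q>N/2$, interpolation, and Sobolev give $\|u\|_{\beta 2^{*}}\leqslant (C\beta)^{1/\beta}\|u\|_{\beta r}$ for some $r<2^{*}$. Iterating with $\beta_{k+1}=\beta_{k}2^{*}/r$ and using $\prod_{k}(C\beta_{k})^{1/\beta_{k}}<\infty$ yields $\|u\|_{\infty}<\infty$. Equivalently, one can run a De Giorgi/Brezis--Kato argument once $a\in L^{q}$ with $q>N/2$. The only delicate step is the first bootstrap above, since both exponents are critical simultaneously.
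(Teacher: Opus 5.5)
Your proof is correct and shares the paper's skeleton:
\begin{itemize}
\item the same truncated test function $u\,u_T^{2(\beta-1)}$, which is the paper's $\bar u_L$ from Lemma~\ref{Lemma6.1};
\item the same Cauchy--Schwarz inequality $(a-b)(\Phi(a)-\Phi(b))\geqslant(\Psi(a)-\Psi(b))^2$ for the nonlocal form, which is the ``Jensen'' step of Lemma~\ref{Lemma6.2};
\item the same level-set splitting to absorb the $2^*$-critical part in the first step, where your $K$ plays the role of the paper's $\bar d$.
\end{itemize}

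You differ in how you treat the $2_s^*$ term and in the format of the iteration.
\begin{itemize}
\item \emph{Your route.} You absorb the large-value part of $u^{2_s^*-2}$ into $[w]_s^2$ through the fractional Sobolev inequality, with $\beta=2_s^*/2$. You then recast the problem as a linear equation with potential $a\in L^q$, $q>N/2$, and run the standard Brezis--Kato/Moser scheme.
\item \emph{The paper's route.} It uses only the $D^{1,2}$ Sobolev inequality on the left ($t=2^*$ in Lemma~\ref{Lemma6.2}). It moves the small-value part of the $2^*$ term into the $2_s^*$ term. It picks $\mu_1=1+\frac{2^*-2_s^*}{2}$ precisely so that $\int u^{2_s^*}u_L^{2(\mu_1-1)}\leqslant\int u^{2^*}<\infty$. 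It then iterates along $\mu_{i+1}-1=\frac{2^*}{2}(\mu_i-1)$, interpolating the $2_s^*$ term between norms already controlled.
\end{itemize}
Your version is more transparent and lands in a classical setting. The paper's avoids the potential reformulation, at the price of exponent bookkeeping.

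The step you single out as the main obstacle is easier than you think. For $K\geqslant1$ one has $u^{2_s^*-2}\leqslant u^{2^*-2}$ on $\{u>K\}$, so both large-value pieces can be absorbed into $\|\nabla w\|_2^2$ alone. On $\{u\leqslant K\}$ one has $a\,w^2\leqslant CK^{2(\beta-1)}(u^{2_s^*}+u^{2^*})$, which is integrable for every $\beta\geqslant1$. So for boundedness only the upper exponent is genuinely critical, and the nonlocal term can be dropped from the start.

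Two minor points of rigor:
\begin{itemize}
\item $\Phi(t)=t\min\{t,T\}^{2(\beta-1)}$ is not convex for $\beta>1$: its derivative drops from $(2\beta-1)T^{2(\beta-1)}$ to $T^{2(\beta-1)}$ at $t=T$. Your inequality only needs $\Phi$ nondecreasing and Lipschitz, so nothing breaks. The passage from $[\Psi(u)]_s$ to $[w]_s$ follows from $g'\leqslant\sqrt{\beta}\,\Psi'$, where $g(t)=t\min\{t,T\}^{\beta-1}$.
\item In the iteration, the region $\{u\leqslant1\}$ contributes an additive constant $C\beta\int u^{2_s^*}$, not a multiple of $\|u\|_{\beta r}^{2\beta}$. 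Indeed, there $u^{2\beta}\geqslant u^{\beta r}$ because $r>2$. So the recursion should be run on $\max\{1,\|u\|_{\beta 2^*}\}$; this is routine.
\end{itemize}
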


This paper is organized as follows.
In Section 2,
we present the functional setting.
In Section 3,
we prove the Lieb translation theorem.
In Section 4, we show the Mountain pass geometry and Nehari manifold.
In Section 5,
we show Theorem \ref{Theorem1.2},
i.e., the existence of ground state solution solution.
In Section 6, we prove Theorem \ref{Theorem1.3}, i.e., the regularity of the solutions.

\section{Sobolev Space}
Let us define the homogeneous Sobolev space as follows 
\begin{equation*}
\begin{aligned}
D^{1,2}(\mathbb{R}^{N})
=
\{u\in L^{2^{*}}(\mathbb{R}^{N})|
|\nabla u|\in L^{2}(\mathbb{R}^{N})\},
\end{aligned}
\end{equation*}
its semi-norm is taken as
\begin{equation*}
\begin{aligned}
\|u\|_{D^{1,2}
(\mathbb{R}^{N})}^{2}
=\int_{\mathbb{R}^{N}}
|\nabla u|^{2}
\mathrm{d}x.
\end{aligned}
\end{equation*}
For $N\geqslant 3$ and $s\in(0,1)$, let $D^{s,2}(\mathbb{R}^{N})$ be the homogeneous fractional Sobolev space, which is the completion of $C_{0}^{\infty}(\mathbb{R}^{N})$ with the semi-norm
\begin{equation*}
\begin{aligned}
\|u\|_{D^{s,2}(\mathbb{R}^{N})}^{2}:=
\int_{\mathbb{R}^{N}}\int_{\mathbb{R}^{N}}\frac{|u(x)-u(y)|^{2}}{|x-y|^{N+2s}}\mathrm{d}x\mathrm{d}y.
\end{aligned}
\end{equation*}
The mixed Sobolev space $X$ defined by the completion of $C^{\infty}_{0}(\mathbb{R}^{N})$ under the semi-norm
\begin{equation*}
\begin{aligned}
\|u\|_{X}^{2}:=
\int_{\mathbb{R}^{N}}
|\nabla u|^{2}
\mathrm{d}x
+
\int_{\mathbb{R}^{N}}
\int_{\mathbb{R}^{N}}
\frac{|u(x)-u(y)|^{2}}{|x-y|^{N+2s}}
\mathrm{d}x
\mathrm{d}y.
\end{aligned}
\end{equation*}

\begin{lemma}\label{Lemma2.1}
$X\hookrightarrow D^{1,2}(\mathbb{R}^{N})$ and 
$X\hookrightarrow D^{s,2}(\mathbb{R}^{N})$.
\end{lemma}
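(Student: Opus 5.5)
The plan is to prove both embeddings at once. First I check the inequalities on the dense class $C_0^\infty(\mathbb{R}^N)$. Then I extend them to $X$ by completion, using the Sobolev inequalities so that the abstract completion is realized inside a space of functions.

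\emph{Step 1: norm inequalities on $C_0^\infty(\mathbb{R}^N)$.} For $u\in C_0^\infty(\mathbb{R}^N)$ both terms in $\|u\|_X^2$ are non-negative. Hence
\begin{equation*}
\|u\|_{D^{1,2}(\mathbb{R}^N)}\leqslant \|u\|_X
\quad\text{and}\quad
\|u\|_{D^{s,2}(\mathbb{R}^N)}\leqslant \|u\|_X .
\end{equation*}
The classical Sobolev inequality gives $\|u\|_{L^{2^*}}\leqslant S^{-1/2}\|\nabla u\|_{L^2}$. The fractional Sobolev inequality (see \cite{Nezza-Palatucci-Valdinoci2012BSM}) gives $\|u\|_{L^{2_s^*}}\leqslant S_s^{-1/2}\|u\|_{D^{s,2}}$. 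Combining these, $\|u\|_X$ controls both $\|u\|_{L^{2^*}}$ and $\|u\|_{L^{2_s^*}}$.

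\emph{Step 2: realizing $X$ as a function space.} Let $(u_n)\subset C_0^\infty(\mathbb{R}^N)$ be Cauchy for $\|\cdot\|_X$. By Step 1 it is Cauchy in $L^{2^*}$ and in $L^{2_s^*}$. So it converges in both spaces, and along a subsequence the two limits agree almost everywhere; call the common limit $u$. The sequence $(\nabla u_n)$ is Cauchy in $L^2$, and its limit must equal the distributional gradient $\nabla u$. Likewise
\begin{equation*}
U_n(x,y)=\frac{u_n(x)-u_n(y)}{|x-y|^{(N+2s)/2}}
\end{equation*}
is Cauchy in $L^2(\mathbb{R}^{2N})$. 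By Fatou's lemma along an a.e.-convergent subsequence, its limit is the corresponding quotient built from $u$.

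Therefore every element of $X$ is identified with a function $u\in L^{2^*}\cap L^{2_s^*}$ with $\nabla u\in L^2$ and finite Gagliardo seminorm. In particular $u\in D^{1,2}(\mathbb{R}^N)$. Since $u_n\to u$ in $D^{s,2}$ with $u_n\in C_0^\infty$, also $u\in D^{s,2}(\mathbb{R}^N)$. The map $X\to D^{1,2}(\mathbb{R}^N)$ is linear, and by density it satisfies $\|u\|_{D^{1,2}}\leqslant\|u\|_X$ for all $u\in X$, so it is continuous. The same holds for $X\to D^{s,2}(\mathbb{R}^N)$.

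\emph{Step 3: injectivity.} The inclusion maps must be injective. Suppose $u\in X$ has $\|u\|_{D^{1,2}}=0$. Then $u$ is constant and lies in $L^{2^*}$, so $u=0$. Since $\|u\|_X$ is the completed seminorm, $\|u\|_X$ vanishes as well. The same argument works for $D^{s,2}$.

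The main obstacle is Step 2, not the inequalities themselves. The difficulty is making sure the abstract completion really consists of functions, and that the two limits (gradient and Gagliardo quotient) belong to the same function $u$. This is exactly where the Sobolev embeddings are needed: they pin down a single pointwise limit in $L^{2^*}$, which prevents constants from escaping.
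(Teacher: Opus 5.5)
Your argument is correct and rests on the same observation as the paper's proof: $\|u\|_{D^{1,2}(\mathbb{R}^{N})}^{2}\leqslant\|u\|_{X}^{2}$ and $\|u\|_{D^{s,2}(\mathbb{R}^{N})}^{2}\leqslant\|u\|_{X}^{2}$ hold because both terms of $\|u\|_{X}^{2}$ are non-negative. The paper stops there, while your Steps~2--3 also use the two Sobolev inequalities to realize the abstract completion $X$ as a space of functions, with the gradient and Gagliardo limits identified through a.e.\ convergence; this makes the inclusions meaningful, a point the paper takes for granted.
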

\begin{proof}
It is easy to see that
\begin{equation*}
\begin{aligned}
\|u\|_{D^{1,2}(\mathbb{R}^{N})}^{2}
\leqslant \|u\|_{X}^{2},
\end{aligned}
\end{equation*}
and
\begin{equation*}
\begin{aligned}
\|u\|_{D^{s,2}(\mathbb{R}^{N})}^{2}
\leqslant \|u\|_{X}^{2}.
\end{aligned}
\end{equation*}
These show $X\hookrightarrow D^{1,2}(\mathbb{R}^{N})$ and 
$X\hookrightarrow D^{s,2}(\mathbb{R}^{N})$.
\end{proof}

\begin{lemma}\label{Lemma2.2}
$X$ is continuously embedding $L^{t}(\mathbb{R}^{N})$, where $t\in [2_{s}^{*},2^{*}]$, and $2_{s}^{*}:=\frac{2N}{N-2s}$ and $2^{*}:=\frac{2N}{N-2}$.
\end{lemma}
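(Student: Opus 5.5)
The plan is to combine the two endpoint Sobolev inequalities with Lemma \ref{Lemma2.1} and then interpolate. First, for $u\in C_0^\infty(\mathbb{R}^N)$, the classical Sobolev inequality gives
\begin{equation*}
\|u\|_{L^{2^*}(\mathbb{R}^N)}\leqslant S^{-1/2}\|u\|_{D^{1,2}(\mathbb{R}^N)}.
\end{equation*}
The fractional Sobolev inequality (see \cite{Nezza-Palatucci-Valdinoci2012BSM}, \cite{Cotsiolis-Tavoularis2004JMAA}) gives
\begin{equation*}
\|u\|_{L^{2_s^*}(\mathbb{R}^N)}\leqslant S_s^{-1/2}\|u\|_{D^{s,2}(\mathbb{R}^N)}.
\end{equation*}
By Lemma \ref{Lemma2.1}, both right-hand sides are bounded by $\|u\|_X$. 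So the two endpoint cases $t=2^*$ and $t=2_s^*$ hold on the dense class $C_0^\infty(\mathbb{R}^N)$, with constants depending only on $N$ and $s$.

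For an intermediate exponent $t\in(2_s^*,2^*)$, we write
\begin{equation*}
\frac1t=\frac{\theta}{2_s^*}+\frac{1-\theta}{2^*},\qquad \theta\in(0,1).
\end{equation*}
Hölder's inequality (the interpolation inequality for Lebesgue norms) then yields
\begin{equation*}
\|u\|_{L^t}\leqslant \|u\|_{L^{2_s^*}}^{\theta}\|u\|_{L^{2^*}}^{1-\theta}\leqslant S_s^{-\theta/2}S^{-(1-\theta)/2}\|u\|_X .
\end{equation*}
This gives $\|u\|_{L^t}\leqslant C_t\|u\|_X$ for all $u\in C_0^\infty(\mathbb{R}^N)$, uniformly for $t$ in the closed interval.

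Finally, we extend the inequality to all of $X$ by density. If $u_n\in C_0^\infty$ is Cauchy in $\|\cdot\|_X$, the inequality shows it is Cauchy in $L^{2_s^*}$ and in $L^{2^*}$. Hence it converges in both spaces, and the two limits coincide a.e. because a subsequence converges a.e. This identifies each element of $X$ with a unique function lying in $L^{2_s^*}\cap L^{2^*}$. The same identification gives that $X$ is a genuine normed (indeed Hilbert) space rather than only a semi-normed one, and that the map $X\to L^t$ is well defined, linear and bounded.

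The main obstacle is this identification step: one must make sure the abstract completion $X$ embeds injectively into $L^t$. The point is that $\|u\|_X=0$ forces $\|u\|_{L^{2^*}}=0$, so the semi-norm is a norm on the completion, and compatibility of the limits in the different $L^t$ spaces comes from a.e. convergence along subsequences. Everything else reduces to the classical Sobolev inequalities and Hölder's inequality.
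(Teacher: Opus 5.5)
Your argument is correct and is essentially the paper's proof: interpolate $L^t$ between $L^{2_s^*}$ and $L^{2^*}$ by H\"older's inequality, and bound both endpoints via the Sobolev inequalities and Lemma \ref{Lemma2.1}. The paper writes the H\"older exponents explicitly for $\int_{\mathbb{R}^N}|u|^t\,\mathrm{d}x$ and silently drops the Sobolev constants, which you track correctly.

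Your density paragraph goes beyond the paper and contains one slip. The implication $\|u\|_X=0\Rightarrow\|u\|_{L^{2^*}}=0$ only shows that the extended map $X\to L^t$ is well defined. (The completion is a Hilbert space anyway, since $\|\cdot\|_X$ is already a norm on $C_0^\infty(\mathbb{R}^N)$.) Injectivity needs the converse: an $X$-Cauchy sequence $u_n\in C_0^\infty(\mathbb{R}^N)$ with $u_n\to0$ in $L^{2^*}$ must satisfy $\|u_n\|_X\to0$. You get this by testing the $L^2$-limit of $\nabla u_n$ against $C_0^\infty$ vector fields, and by extracting a.e.\ convergent subsequences of the Gagliardo difference quotients.
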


\begin{proof}
Using H\"{o}lder's inequality, we have
\begin{equation*}
\begin{aligned}
\int_{\mathbb{R}^{N}}
|u|^{t}
\mathrm{d}x
\leqslant
\left(
\int_{\mathbb{R}^{N}}
|u|^{2_{s}^{*}}
\mathrm{d}x
\right)
^{\frac{(tN-2N-2t)(N-2s)}{4N(s-1)}}
\left(
\int_{\mathbb{R}^{N}}
|u|^{2^{*}}
\mathrm{d}x
\right)
^{\frac{(tN-2N-2ts)(N-2)}{4N(1-s)}}.
\end{aligned}
\end{equation*}
From Lemma \ref{Lemma2.1}, we know\begin{equation*}
\begin{aligned}
\left(
\int_{\mathbb{R}^{N}}
|u|^{2_{s}^{*}}
\mathrm{d}x
\right)^{\frac{2}{2_{s}^{*}}}
\leqslant
\|u\|_{D^{s,2}(\mathbb{R}^{N})}^{2}
\leqslant \|u\|_{X}^{2},
\end{aligned}
\end{equation*}
and
\begin{equation*}
\begin{aligned}
\left(
\int_{\mathbb{R}^{N}}
|u|^{2^{*}}
\mathrm{d}x
\right)^{\frac{2}{2^{*}}}
\leqslant
\|u\|_{D^{1,2}(\mathbb{R}^{N})}^{2}
\leqslant \|u\|_{X}^{2}.
\end{aligned}
\end{equation*}
Then we get
\begin{equation*}
\begin{aligned}
\int_{\mathbb{R}^{N}}
|u|^{t}
\mathrm{d}x
\leqslant&
\left(
\int_{\mathbb{R}^{N}}
|u|^{2_{s}^{*}}
\mathrm{d}x
\right)
^{\frac{(tN-2N-2t)(N-2s)}{4N(s-1)}}
\left(
\int_{\mathbb{R}^{N}}
|u|^{2^{*}}
\mathrm{d}x
\right)
^{\frac{(tN-2N-2ts)(N-2)}{4N(1-s)}}\\
\leqslant&\|u\|_{X}^{t}<\infty.
\end{aligned}
\end{equation*}
The proof is completed.
\end{proof}

\begin{lemma}\label{Lemma2.3}
{\rm \cite{Nezza-Palatucci-Valdinoci2012BSM}}
Let
$s\in(0,1]$
and $N>2s$. 
Then there exists a constant 
$S_{s}>0$ 
such that for
any 
$u\in D^{s,2}(\mathbb{R}^{N})$,
\begin{equation*}
\begin{aligned}
\|u\|_{L^{2_{s}^{*}}(\mathbb{R}^{N})}^{2}
\leqslant
S_{s}^{-1}
\|u\|^{2}_{D^{s,2}(\mathbb{R}^{N})}.
\end{aligned}
\end{equation*}
\end{lemma}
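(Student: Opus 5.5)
Lemma \ref{Lemma2.3} is the fractional Sobolev inequality, cited from \cite{Nezza-Palatucci-Valdinoci2012BSM}. The plan is to reproduce the standard argument: prove it first for $u\in C_0^\infty(\mathbb{R}^N)$ and then extend by density, since $D^{s,2}(\mathbb{R}^N)$ is by definition the completion of $C_0^\infty(\mathbb{R}^N)$. The case $s=1$ is the classical Gagliardo--Nirenberg--Sobolev inequality, so the real work is $s\in(0,1)$.

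\textbf{Step 1: Fourier side.} For $u\in C_0^\infty(\mathbb{R}^N)$, Plancherel's theorem gives the identity
\begin{equation*}
\int_{\mathbb{R}^{N}}\int_{\mathbb{R}^{N}}\frac{|u(x)-u(y)|^{2}}{|x-y|^{N+2s}}\,\mathrm{d}x\,\mathrm{d}y
=C(N,s)\int_{\mathbb{R}^N}|\xi|^{2s}|\mathcal{F}u(\xi)|^2\,\mathrm{d}\xi .
\end{equation*}
To see this, fix $h=x-y$. Then $\|u(\cdot+h)-u\|_{L^2}^2=\int|e^{i\xi\cdot h}-1|^2|\mathcal{F}u|^2\,\mathrm{d}\xi$. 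Integrating in $h$ against $|h|^{-N-2s}$ and using rotation invariance and homogeneity, one gets
\begin{equation*}
\int|e^{i\xi\cdot h}-1|^2|h|^{-N-2s}\,\mathrm{d}h=c\,|\xi|^{2s},
\end{equation*}
and $c$ is finite because $0<s<1$.

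\textbf{Step 2: Riesz potential.} Set $g=(-\Delta)^{s/2}u$, so that $\|g\|_{L^2}^2\simeq\|u\|_{D^{s,2}}^2$. We can then write $u=I_s g$, where $I_s g=c_{N,s}\,|x|^{-(N-s)}*g$ is the Riesz potential. The Hardy--Littlewood--Sobolev inequality gives
\begin{equation*}
\|I_s g\|_{L^{q}}\le C\|g\|_{L^2}\qquad\text{for }\tfrac1q=\tfrac12-\tfrac sN,
\end{equation*}
and this $q$ is exactly $2_s^*$. That yields the inequality with some constant $S_s>0$.

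If one wants to avoid HLS, there is a self-contained alternative: a dyadic/averaging argument as in \cite{Nezza-Palatucci-Valdinoci2012BSM}. Compare $u$ with its averages on balls, estimate the level sets $|\{|u|>2^k\}|$, and sum using the Gagliardo seminorm.

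\textbf{Step 3: Density.} For general $u\in D^{s,2}(\mathbb{R}^N)$, take $u_n\in C_0^\infty$ with $u_n\to u$ in the seminorm. The inequality makes $(u_n)$ Cauchy in $L^{2_s^*}$, so the limit lies in $L^{2_s^*}$. Fatou's lemma then passes the inequality to the limit.

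The main obstacle is Step 2: proving HLS, or the dyadic estimate, rigorously. This is where the homogeneity and scaling constraint $N>2s$ enter, since for $N\le 2s$ the potential $I_s$ is not defined by a locally integrable kernel at infinity. I would rely on the standard HLS result rather than reprove it.
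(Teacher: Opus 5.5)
Your proof is correct. Note, though, that the paper gives no argument of its own here: Lemma~\ref{Lemma2.3} is simply quoted from \cite{Nezza-Palatucci-Valdinoci2012BSM}. The proof in that reference is the elementary level-set argument you list only as an alternative, not the Fourier/Hardy--Littlewood--Sobolev one. There one works with compactly supported (and, after truncation, bounded) $u$ and sets $a_k=|\{|u|>2^k\}|$. The lower bound $\int_{\mathbb{R}^N\setminus E}|x-y|^{-N-2s}\,\mathrm{d}y\geqslant C|E|^{-2s/N}$ then controls $\sum_k a_{k+1}a_k^{-2s/N}2^{2k}$ by the Gagliardo seminorm, and a discrete summation inequality recovers $\|u\|_{L^{2_s^*}}^2$.

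Your route trades that self-containedness for brevity. Plancherel turns the seminorm into $\||\xi|^{s}\mathcal{F}u\|_{L^2}^2$, and writing $u=I_s g$ with $g\in L^2$ pushes all the difficulty into HLS, which you take as a black box. This is specific to the Hilbertian exponent $2$, but it handles $s=1$ on the same footing. It is also the route, via Lieb's sharp HLS inequality \cite{Lieb1983AM}, that gives the optimal constant $S_s$. The dyadic argument, by contrast, needs no Fourier analysis or singular-integral theory and works verbatim for $W^{s,p}$ with $sp<N$. Your density step is needed in either approach, since $D^{s,2}(\mathbb{R}^N)$ is defined as an abstract completion.

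One small correction to your closing remark. The Riesz kernel $|x|^{-(N-s)}$ is locally integrable for every $0<s<N$. What fails when $N\leqslant 2s$ is that $\frac1q=\frac12-\frac sN\leqslant 0$, so HLS has no admissible target exponent. Equivalently, the tail of the kernel is not square integrable near infinity, so $I_s g$ need not converge for $g\in L^2$.
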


\section{Lieb's translation theorem}
In this section, 
we recall the Lieb translation theorem and we prove an extention of it.

\begin{lemma}\label{Lemma3.1}
Let $N\geqslant 3$, $s\in(0,1)$ and $q\in(2_{s}^{*},2^{*})$. Then the following inequality holds
\begin{equation*}
\begin{aligned}
\int_{\mathbb{R}^{N}}|u|^{q}\mathrm{d}x\leqslant2C(N+1)^{2}
\left(\sup_{z\in \mathbb{R}^{N}}
\int_{B(z,1)}
|u|^{q}
\mathrm{d}x\right)^{\frac{q-2}{q}}
\|u\|_{X}^{2},
\end{aligned}
\end{equation*}
for all $u\in X$.
\end{lemma}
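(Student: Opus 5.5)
This is a Lions-type vanishing estimate, and I would prove it by the usual cube-covering argument. On each unit ball I use a local Sobolev inequality in which the power of the local $L^q$ norm matches the exponent $\frac{q-2}{q}$, and then I sum over the covering. The factor $(N+1)^2$ in the statement suggests counting overlaps of balls covering $\mathbb{R}^N$. Each point lies in at most a dimensional number of balls $B(z,1)$ with $z$ in a suitable lattice; taking $z\in \frac{1}{\sqrt N}\mathbb{Z}^N$-type lattices gives a bounded multiplicity, which is what I will absorb into that constant. I also note that $q>2_s^*>2$, so $\frac{q-2}{q}\in(0,1)$.

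\emph{Step 1 (local inequality).} For a unit ball $B=B(z,1)$ I split
\[
\int_B|u|^q\,\mathrm{d}x=\int_B|u|^{q-2}|u|^2\,\mathrm{d}x\le \Big(\int_B|u|^q\Big)^{\frac{q-2}{q}}\Big(\int_B|u|^q\Big)^{\frac{2}{q}} .
\]
I then need $\|u\|_{L^q(B)}^2\le C\big(\|\nabla u\|_{L^2(B)}^2+\|u\|_{L^2(B)}^2\big)$, which is the Sobolev embedding $H^1(B)\hookrightarrow L^q(B)$, valid since $q<2^*$. The catch is that $X$ does not control $\|u\|_{L^2}$ globally. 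I will instead bound the lower-order local term by a local $L^{2_s^*}$ norm, or more simply by $\|u\|_{L^q(B)}^2$ times $|B|^{\cdot}$, which is circular. The cleaner route is to use the local Sobolev inequality on $B$ with the mean subtracted, $\|u-\bar u_B\|_{L^q(B)}\le C\|\nabla u\|_{L^2(B)}$, and to bound the mean by $|\bar u_B|\le C\|u\|_{L^{2_s^*}(B)}$. Globally, the sum $\sum_z\|u\|_{L^{2_s^*}(B(z,1))}^2$ is not controlled by $\|u\|_{L^{2_s^*}}^2$, because $2/2_s^*<1$. This is the main obstacle.

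\emph{Step 2 (handling the lower-order term).} To get past it I would use the nonlocal part of $\|u\|_X$. By the fractional Poincaré/Sobolev inequality on balls, $\|u\|_{L^{2}(B)}^2$ can be replaced by the localized Gagliardo seminorm
\[
\int_B\int_{\mathbb{R}^N}\frac{|u(x)-u(y)|^2}{|x-y|^{N+2s}}\,\mathrm{d}y\,\mathrm{d}x
\]
plus a term that sums correctly. Concretely, I aim to prove for each ball
\[
\|u\|_{L^q(B)}^2\le C\Big(\int_B|\nabla u|^2\,\mathrm{d}x+\int_B\int_{\mathbb{R}^N}\frac{|u(x)-u(y)|^2}{|x-y|^{N+2s}}\,\mathrm{d}y\,\mathrm{d}x\Big).
\]
This should follow by approximating $u$ by functions in $C_0^\infty$, applying the extension of $u|_B$, and using that the interpolation between $2_s^*$ and $2^*$ in Lemma \ref{Lemma2.2} is local. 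If this fails in full generality, I would fall back on a weaker claim and prove it for functions in $X$ via a partition of unity $\{\varphi_z\}$ with $\sum\varphi_z^2=1$. I would then apply Lemma \ref{Lemma2.2} to $\varphi_z u$ and control the commutator terms $|\nabla\varphi_z|^2|u|^2$ and the fractional commutators by the $X$-norm. This commutator control is where I expect the real work to be.

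\emph{Step 3 (summation).} Multiplying the local bound by $\big(\sup_z\int_{B(z,1)}|u|^q\big)^{\frac{q-2}{q}}$ and summing over the covering $\{B(z,1)\}$ gives
\[
\int_{\mathbb{R}^N}|u|^q\,\mathrm{d}x\le \sum_z\int_{B(z,1)}|u|^q\,\mathrm{d}x\le C\Big(\sup_z\int_{B(z,1)}|u|^q\Big)^{\frac{q-2}{q}}\sum_z(\text{local energy on }B(z,1)).
\]
The bounded overlap, at most of order $(N+1)^2$ after accounting for both the local and the double-integral terms (the latter with the factor $2$ from symmetrizing $x$ and $y$), bounds the last sum by $2(N+1)^2\|u\|_X^2$. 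This yields the stated inequality.
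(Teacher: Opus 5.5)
\textbf{There is a genuine gap at Step 2, and no argument can close it, because the inequality is false as stated on $X$.}

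\emph{Step 2 fails.} Your proposed local estimate
\[
\|u\|_{L^q(B)}^2\leqslant C\left(\int_B|\nabla u|^2\,\mathrm{d}x+\int_B\int_{\mathbb{R}^N}\frac{|u(x)-u(y)|^2}{|x-y|^{N+2s}}\,\mathrm{d}y\,\mathrm{d}x\right)
\]
breaks down for functions that are nearly constant on a large region. Take $\phi\in C_0^\infty(\mathbb{R}^N)$ with $0\leqslant\phi\leqslant1$ and $\phi\equiv1$ on $B(0,1)$, and set $u=\phi(\cdot/R)$ with $R\geqslant2$.
\begin{itemize}
\item On $B=B(0,1)$ the gradient term vanishes.
\item For $x\in B$ one has $u(x)-u(y)=0$ unless $|y|\geqslant R$. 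Hence the double integral is at most $C\int_{|y|\geqslant R}|y|^{-N-2s}\,\mathrm{d}y=O(R^{-2s})$.
\item The left-hand side, however, equals $|B|^{2/q}$.
\end{itemize}

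\emph{The partition-of-unity fallback fails too,} for the reason you suspected. Consider the commutator terms
\[
\int_{\mathbb{R}^N}|\nabla\varphi_z|^2u^2\,\mathrm{d}x
\quad\text{and}\quad
\int_{\mathbb{R}^N}\int_{\mathbb{R}^N}u(y)^2\frac{|\varphi_z(x)-\varphi_z(y)|^2}{|x-y|^{N+2s}}\,\mathrm{d}x\,\mathrm{d}y .
\]
Summed over $z$, they are controlled only by $\|u\|_{L^2(\mathbb{R}^N)}^2$, and for the functions above they really are of order $R^N$. But $\|u\|_X$ does not control $\|u\|_{L^2(\mathbb{R}^N)}$.

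\emph{The same functions refute the lemma itself.} For $u_R=\phi(\cdot/R)$,
\begin{gather*}
\int_{\mathbb{R}^N}|u_R|^q\,\mathrm{d}x=R^N\|\phi\|_{L^q(\mathbb{R}^N)}^q,\qquad \sup_{z\in\mathbb{R}^N}\int_{B(z,1)}|u_R|^q\,\mathrm{d}x\leqslant|B(0,1)|,\\
\|u_R\|_X^2=R^{N-2}\|\phi\|_{D^{1,2}(\mathbb{R}^N)}^2+R^{N-2s}\|\phi\|_{D^{s,2}(\mathbb{R}^N)}^2 .
\end{gather*}
So the ratio of the left side to the right side is at least $cR^{2s}\to\infty$, whatever the constant.

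\emph{The paper's proof hides this obstacle rather than overcoming it.} It interpolates on $B(z,1)$ between $L^{2_s^*}$ and $L^{2^*}$, where the resulting powers add up to $q/2$. It then invokes
\[
\|u\|_{L^{2_s^*}(B)}^2\leqslant S_s^{-1}\int_B\int_B\frac{|u(x)-u(y)|^2}{|x-y|^{N+2s}}\,\mathrm{d}y\,\mathrm{d}x
\quad\text{and}\quad
\|u\|_{L^{2^*}(B)}^2\leqslant S_1^{-1}\int_B|\nabla u|^2\,\mathrm{d}x .
\]
These are Sobolev inequalities on a ball with no lower-order term, and they fail for constants.

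Your Step 1 diagnosis is exactly right: a local lower-order term is unavoidable, and it cannot be summed using $\|u\|_X$ alone, since $2/2_s^*<1$. The estimate does become true if $\|u\|_X^2$ is replaced by $\|u\|_X^2+\|u\|_{L^2(\mathbb{R}^N)}^2$. That is the classical Lions lemma, and your Steps 1 and 3 already prove it. On $X$ itself, any correct replacement must be compatible with the scaling $u\mapsto u(\cdot/R)$.
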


\begin{proof}
Let $u\in X$ and $q\in(2_{s}^{*},2^{*})$.
From Lemma \ref{Lemma2.3},
we have that
\begin{equation}\label{3.1}
\begin{aligned}
&\int_{B(z,1)}|u|^{q}\mathrm{d}x\\
\leqslant&
\left(
\int_{B(z,1)}
|u|^{2_{s}^{*}}
\mathrm{d}x
\right)^{\frac{2^{*}-q}{2^{*}-2_{s}^{*}}}
\left(
\int_{B(z,1)}
|u|^{2^{*}}
\mathrm{d}x
\right)^{\frac{q-2_{s}^{*}}{2^{*}-2_{s}^{*}}}\\
\leqslant&
\left(
S_{s}^{-1}
\int_{B(z,1)}\int_{B(z,1)}\frac{|u(x)-u(y)|^{2}}{|x-y|^{N+2s}}\mathrm{d}y\mathrm{d}x
\right)^{\frac{2_{s}^{*}}{2}\frac{2^{*}-q}{2^{*}-2_{s}^{*}}}
\left(
S_{1}^{-1}
\int_{B(z,1)}
|\nabla u|^{2}
\mathrm{d}x
\right)^{\frac{2^{*}}{2}\frac{q-2_{s}^{*}}{2^{*}-2_{s}^{*}}}\\
\leqslant&
S_{s}^{-\frac{2_{s}^{*}}{2}\frac{2^{*}-q}{2^{*}-2_{s}^{*}}}
S_{1}^{-\frac{2^{*}}{2}\frac{q-2_{s}^{*}}{2^{*}-2_{s}^{*}}}
\left(
\int_{B(z,1)}\int_{B(z,1)}\frac{|u(x)-u(y)|^{2}}{|x-y|^{N+2s}}\mathrm{d}y\mathrm{d}x
+
\int_{B(z,1)}
|\nabla u|^{2}
\mathrm{d}x\right)^{\frac{q}{2}}.
\end{aligned}
\end{equation}
Applying \eqref{3.1},
we know that
\begin{equation*}
\begin{aligned}
&\int_{B(z,1)}|u|^{q}\mathrm{d}x\\
=&
\left(\int_{B(z,1)}|u|^{q}\mathrm{d}x\right)^{\frac{2}{q}}
\left(\int_{B(z,1)}|u|^{q}\mathrm{d}x\right)
^{\frac{q-2}{q}}\\
\leqslant&
C
\left(
\int_{B(z,1)}\int_{B(z,1)}\frac{|u(x)-u(y)|^{2}}{|x-y|^{N+2s}}\mathrm{d}y\mathrm{d}x
+
\int_{B(z,1)}
|\nabla u|^{2}
\mathrm{d}x\right)
\left(\int_{B(z,1)}|u|^{q}\mathrm{d}x\right)
^{\frac{q-2}{q}}.
\end{aligned}
\end{equation*}
Covering $\mathbb{R}^{N}$ by balls of radius $1$,
in such a way that each point of $\mathbb{R}^{N}$ is contained in at most $N+1$ balls.
We find that
\begin{equation*}
\begin{aligned}
\int_{\mathbb{R}^{N}}|u|^{q}\mathrm{d}x
\leqslant C(N+1)
\left(\sup_{z\in \mathbb{R}^{N}}
\int_{B(z,1)}
|u|^{q}
\mathrm{d}x\right)^{\frac{q-2}{q}}
\|u\|_{X}^{2}.
\end{aligned}
\end{equation*}
\end{proof}

\begin{theorem}\label{Theorem3.1}
{\rm (Lieb's translation theorem)}
Let $N\geqslant3$, $0<s<1$, $q\in (2_{s}^{*},2^{*})$, and $\{u_{n}\}$ be a bounded sequence in $X$ satisfying:
\begin{equation*}
\begin{aligned}
\lim\limits_{n\to\infty}\int_{\mathbb{R}^{N}}|u_{n}|^{q}\mathrm{d}x>0.
\end{aligned}
\end{equation*}
Then there exists $\{z_{n}\}\subset \mathbb{R}^{N}$ such that $\{\bar{u}_{n}:=u_{n}(x+z_{n})\}$ convergence strongly to $\bar{u}\not\equiv0$ in $L^{q}_{loc}(\mathbb{R}^{N})$.
\end{theorem}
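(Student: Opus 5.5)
Lemma~\ref{Lemma3.1} gives a lower bound on local mass, which I will turn into a nonvanishing translated weak limit; compactness of the local Sobolev embedding then gives strong local convergence. Set $M:=\sup_{n}\|u_{n}\|_{X}^{2}<\infty$ and $\delta:=\lim_{n}\int_{\mathbb{R}^{N}}|u_{n}|^{q}\,\mathrm{d}x>0$. For $n$ large, $\int|u_{n}|^{q}\geqslant\delta/2$. Lemma~\ref{Lemma3.1} then yields
\begin{equation*}
\sup_{z\in\mathbb{R}^{N}}\int_{B(z,1)}|u_{n}|^{q}\,\mathrm{d}x\geqslant\Big(\frac{\delta}{4C(N+1)^{2}M}\Big)^{\frac{q}{q-2}}=:2\eta>0 .
\end{equation*}
Choose $z_{n}$ with $\int_{B(z_{n},1)}|u_{n}|^{q}\geqslant\eta$, and set $\bar u_{n}:=u_{n}(\cdot+z_{n})$. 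Thus $\int_{B(0,1)}|\bar u_{n}|^{q}\,\mathrm{d}x\geqslant\eta$ for all large $n$.

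Both seminorms in $\|\cdot\|_{X}$ are translation invariant, so $\{\bar u_{n}\}$ is bounded in $X$ by $M$. Since $X$ is a Hilbert space (the completion under a norm coming from an inner product), we may extract a subsequence with $\bar u_{n}\rightharpoonup\bar u$ weakly in $X$. By Lemma~\ref{Lemma2.1}, the same subsequence converges weakly in $D^{1,2}(\mathbb{R}^{N})$. By Lemma~\ref{Lemma2.2}, $\bar u_{n}$ is bounded in $L^{2^{*}}(\mathbb{R}^{N})$, hence in $L^{2}(B_{R})$ for every $R$.

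Next I show strong convergence on each ball $B_{R}$. The restrictions are bounded in $H^{1}(B_{R})$, since $\|\nabla\bar u_{n}\|_{L^{2}}\leqslant M^{1/2}$ and the $L^{2}(B_{R})$ bound comes from $L^{2^{*}}$ via H\"older. Rellich--Kondrachov gives compactness of $H^{1}(B_{R})\hookrightarrow L^{r}(B_{R})$ for $r<2^{*}$, and $q<2^{*}$. Taking $R=1,2,3,\dots$ and a diagonal subsequence, $\bar u_{n}\to\bar u$ strongly in $L^{q}(B_{R})$ for every $R$, that is, in $L^{q}_{loc}(\mathbb{R}^{N})$. The local limits agree with the weak limit $\bar u$, because weak convergence in $D^{1,2}$ implies distributional convergence.

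Nontriviality follows by passing to the limit on the unit ball:
\begin{equation*}
\int_{B(0,1)}|\bar u|^{q}\,\mathrm{d}x=\lim_{n}\int_{B(0,1)}|\bar u_{n}|^{q}\,\mathrm{d}x\geqslant\eta>0,
\end{equation*}
so $\bar u\not\equiv0$.

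The main obstacle is the first step: I rely on the constant in Lemma~\ref{Lemma3.1} being uniform and on $q\in(2_{s}^{*},2^{*})$ so that Lemma~\ref{Lemma3.1} applies. The remaining steps are standard compactness arguments, with the one technical point that the local $L^{2}$ bound has to come from $L^{2^{*}}$, since $X$ contains no $L^{2}$ term. The conclusion holds along a subsequence, which is the intended meaning of the statement.
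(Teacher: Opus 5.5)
\paragraph{Verdict.} Your argument follows the paper's proof step for step: a uniform lower bound on local $L^{q}$ mass from Lemma~\ref{Lemma3.1}, near-maximizing translations, then local compactness. Your compactness part is actually more careful than the paper's. The genuine gap is exactly the step you flagged, and the paper's proof has the same defect.

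\paragraph{The gap.} Lemma~\ref{Lemma3.1} is false as stated, so it cannot give $\sup_{z}\int_{B(z,1)}|u_{n}|^{q}\,\mathrm{d}x\geqslant 2\eta$. Take $0\not\equiv u\in C_{0}^{\infty}(\mathbb{R}^{N})$ and $u_{\lambda}(x):=u(\lambda x)$ with $\lambda\to0^{+}$.
\begin{itemize}
\item The left side is $\int_{\mathbb{R}^{N}}|u_{\lambda}|^{q}\,\mathrm{d}x=\lambda^{-N}\int_{\mathbb{R}^{N}}|u|^{q}\,\mathrm{d}x$.
\item The supremum over unit balls stays below $|B(0,1)|\,\|u\|_{L^{\infty}}^{q}$.
\item For $\lambda\leqslant1$, $\|u_{\lambda}\|_{X}^{2}=\lambda^{2-N}\|u\|_{D^{1,2}(\mathbb{R}^{N})}^{2}+\lambda^{2s-N}\|u\|_{D^{s,2}(\mathbb{R}^{N})}^{2}\leqslant\lambda^{2s-N}\|u\|_{X}^{2}$.
\end{itemize}
So the left side of Lemma~\ref{Lemma3.1} exceeds the right side by a factor of order $\lambda^{-2s}\to\infty$. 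The error in the lemma's proof is \eqref{3.1}: Sobolev's inequality is applied on a ball with only local seminorms on the right. That fails for any function equal to a nonzero constant on the ball. Since nothing else in your argument rules out vanishing of the local mass, the first step is unsupported.

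\paragraph{How to repair it.} The theorem itself is true, and your proof goes through once the first step is replaced by a Lions-type estimate built on local $L^{2_{s}^{*}}$ mass, which is summable in $X$.
\begin{itemize}
\item On a unit ball $B$, Sobolev--Poincar\'e gives $\|u\|_{L^{2^{*}}(B)}\leqslant C\big(\|\nabla u\|_{L^{2}(B)}+\|u\|_{L^{2_{s}^{*}}(B)}\big)$.
\item Let $p_{0}:=2+\frac{4}{N-2s}\in(2_{s}^{*},2^{*})$. This is the exponent for which interpolating between $L^{2_{s}^{*}}(B)$ and $L^{2^{*}}(B)$ puts exactly the power $2$ on the $L^{2^{*}}$ factor. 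Hence $\int_{B}|u|^{p_{0}}\,\mathrm{d}x\leqslant C\|u\|_{L^{2_{s}^{*}}(B)}^{p_{0}-2}\big(\|u\|_{L^{2_{s}^{*}}(B)}^{2}+\|\nabla u\|_{L^{2}(B)}^{2}\big)$.
\item Sum over a cover of $\mathbb{R}^{N}$ by unit balls with bounded overlap, and write $m(u):=\sup_{z}\|u\|_{L^{2_{s}^{*}}(B(z,1))}$. This gives
\[
\int_{\mathbb{R}^{N}}|u|^{p_{0}}\,\mathrm{d}x\leqslant C\Big(m(u)^{p_{0}-2_{s}^{*}}\int_{\mathbb{R}^{N}}|u|^{2_{s}^{*}}\,\mathrm{d}x+m(u)^{p_{0}-2}\int_{\mathbb{R}^{N}}|\nabla u|^{2}\,\mathrm{d}x\Big).
\]
\item For your sequence both integrals on the right are bounded. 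If $m(u_{n})\to0$ along a subsequence, then $u_{n}\to0$ in $L^{p_{0}}$. Interpolating with the $L^{2_{s}^{*}}$ and $L^{2^{*}}$ bounds gives $u_{n}\to0$ in $L^{q}$, contradicting $\int|u_{n}|^{q}\,\mathrm{d}x\geqslant\delta/2$.
\item Hence there are $z_{n}$ with $\int_{B(z_{n},1)}|u_{n}|^{2_{s}^{*}}\,\mathrm{d}x\geqslant\eta'>0$ for large $n$. H\"older on the unit ball turns this into a uniform lower bound for $\int_{B(z_{n},1)}|u_{n}|^{q}\,\mathrm{d}x$.
\end{itemize}
From there your translation, Rellich--Kondrachov and diagonal steps apply verbatim.
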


\begin{proof}
Note that $\{u_{n}\}$ is a bounded sequence in $X$.
Up to a subsequence,
we assume that
\begin{equation*}
\begin{aligned}
u_{n}\rightharpoonup u
~
\mathrm{in}
~
X,~
u_{n}\rightarrow u
~
\mathrm{a.e. ~in}
~\mathbb{R}^{N},
u_{n}\rightarrow u
~
\mathrm{in}
~
L^{q}_{loc}(\mathbb{R}^{N}).
\end{aligned}
\end{equation*}
Applying Lemma \ref{Lemma3.1} and $\lim\limits_{n\to\infty}\int_{\mathbb{R}^{N}}|u_{n}|^{q}\mathrm{d}x>0$,
there exists
$C>0$
such that 
\begin{equation*}
\begin{aligned}
\sup_{z\in \mathbb{R}^{N}}
\int_{B(z,1)}
|u|^{q}
\mathrm{d}x\geqslant C>0.
\end{aligned}
\end{equation*}
Note that $\{u_{n}\}$ is bounded in $X$ and $X\hookrightarrow L^{q}(\mathbb{R}^{N})$, 
we have that
\begin{equation*}
\begin{aligned}
\sup_{z\in \mathbb{R}^{N}}
\int_{B(z,1)}
|u|^{q}
\mathrm{d}x\leqslant\int_{\mathbb{R}^{N}}
|u_{n}|^{q}
\mathrm{d}x \leqslant C.
\end{aligned}
\end{equation*}
Hence, there exists $C_{0}$ such that 
\begin{equation*}
\begin{aligned}
C_{0}
\leqslant
\sup_{z\in \mathbb{R}^{N}}
\int_{B(z,1)}
|u|^{q}
\mathrm{d}x\leqslant C_{0}^{-1}.
\end{aligned}
\end{equation*}
From the above inequality,
there exists
$z_{n}\in \mathbb{R}^{N}$
such that
\begin{equation*}
\begin{aligned}
\int_{B(z_{n},1)}
|u_{n}|^{q}
\mathrm{d}x
\geqslant
\sup_{z\in \mathbb{R}^{N}}
\int_{B(z,1)}
|u_{n}|^{q}
\mathrm{d}x
-
\frac{C}{2n}
\geqslant
C_{1}>0.
\end{aligned}
\end{equation*}
Set $\bar{u}_{n}:=u_{n}(x+z_{n})$. Then $\|\bar{u}_{n}\|_{X}=\|u_{n}\|_{X}$ and
\begin{equation*}
\begin{aligned}
\int_{B(0,1)}
|\bar{u}_{n}|^{q}
\mathrm{d}x
\geqslant
C_{1}>0.
\end{aligned}
\end{equation*}
Up to a subsequence,
there exists
$\bar{u}$
such that
\begin{equation*}
\begin{aligned}
\bar{u}_{n}\rightharpoonup \bar{u}
\;
\mathrm{in}
~
X,
\;\;
\bar{u}_{n}\rightarrow \bar{u}
~
\mathrm{a.e. ~in}
~
\mathbb{R}^{N}.
\end{aligned}
\end{equation*}
Since the embedding
$X
\hookrightarrow
L^{q}_{loc}(\mathbb{R}^{N})$ is compact,
we deduce that
$\bar{u}\not\equiv0$.
\end{proof}

\subsection{Generalized version of Lieb's translation theorem}

Let 
$q\in[1,\infty)$
and
$\varpi\in(0,N]$,
the usual homogeneous Morrey space is defined by
\begin{equation*}
\begin{aligned}
\mathcal{M}^{q,\varpi}(\mathbb{R}^{N}):=\{u\in L^{1}_{loc}(\mathbb{R}^{N}) |\|u\|^{q}_{\mathcal{M}^{q,\varpi}(\mathbb{R}^{N})}<+\infty\}
\end{aligned}
\end{equation*}
equipped with the norm
\begin{equation*}
\begin{aligned}
\|u\|^{q}_{\mathcal{M}^{q,\varpi}(\mathbb{R}^{N})}
:=
\sup\limits_{R>0,x\in\mathbb{R}^{N}}
R^{\varpi-N}
\int_{B(x,R)}
|u(y)|^{q}
\mathrm{d}y.
\end{aligned}
\end{equation*}
\begin{lemma}
{\rm \cite{Palatucci2014CV}}
\label{Lemma3.2}
For
$N\geqslant3$
and $s\in(0,1]$,
there exists
$C>0$
such that
for
$\iota$
and
$\vartheta$
satisfying
$\frac{2}{2_{s}^{*}}\leqslant\iota<1$ and
$1\leqslant \vartheta<2_{s}^{*}$,
we have
\begin{align*}
\left(
\int_{\mathbb{R}^{N}}
|u|^{2_{s}^{*}}
\mathrm{d}x
\right)^{\frac{1}{2_{s}^{*}}}
\leqslant
C
\|u\|_{D^{s,2}(\mathbb{R}^{N})}^{\iota}
\|u\|_{\mathcal{M}^{\vartheta,\frac{\vartheta(N-2s)}{2}}(\mathbb{R}^{N})}^{1-\iota}
\end{align*}
for
$u\in D^{s,2}(\mathbb{R}^{N})$.
\end{lemma}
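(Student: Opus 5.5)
This is a refined Sobolev inequality involving Morrey norms, and I would prove it by the Palatucci--Pisante route. The first step reduces the general pair $(\iota,\vartheta)$ to one endpoint: by Hölder on balls, $\|u\|_{\mathcal{M}^{\vartheta,\frac{\vartheta(N-2s)}{2}}}\leqslant C\|u\|_{\mathcal{M}^{1,\frac{N-2s}{2}}}$ for $\vartheta\geqslant1$. Hence it suffices to prove the estimate with $\vartheta=1$ for the optimal-type exponents and then interpolate in $\iota$. The Morrey norm is always controlled by the $L^{2_s^*}$ norm, with the same scaling, since $R^{\frac{\vartheta(N-2s)}{2}-N}\int_{B(x,R)}|u|^\vartheta\leqslant C\|u\|_{L^{2_s^*}}^{\vartheta}$ by Hölder. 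Together with Lemma \ref{Lemma2.3}, this lets me raise $\iota$ toward $1$ trivially: $\|u\|^{1-\iota}_{\mathcal{M}}$ can be traded for a power of $\|u\|_{D^{s,2}}$. So the key case is the smallest exponent $\iota=\frac{2}{2_s^*}$.

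For that case I would use the improved Sobolev inequality of Gérard--Meyer--Oru type in Besov form:
\begin{equation*}
\|u\|_{L^{2_s^*}}\leqslant C\|u\|_{D^{s,2}}^{\frac{2}{2_s^*}}\|u\|_{\dot B^{-\frac{N-2s}{2}}_{\infty,\infty}}^{1-\frac{2}{2_s^*}}.
\end{equation*}
I would prove it via a Littlewood--Paley decomposition and a layer-cake splitting of $\{|u|>\lambda\}$ into low and high frequencies, choosing the cutoff frequency depending on $\lambda$. The low part is bounded pointwise by the Besov norm. The high part is handled by Chebyshev in $L^2$ through $\sum_j 2^{2js}\|\Delta_j u\|_2^2\simeq\|u\|_{D^{s,2}}^2$.

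Next I would bound the Besov norm by the Morrey norm, i.e.\ $\|u\|_{\dot B^{-\frac{N-2s}{2}}_{\infty,\infty}}\leqslant C\|u\|_{\mathcal{M}^{1,\frac{N-2s}{2}}}$. The Besov norm equals $\sup_{t>0}t^{\frac{N-2s}{4}}\|e^{t\Delta}u\|_\infty$, and I would estimate $|e^{t\Delta}u(x)|$ by splitting the Gaussian kernel into dyadic annuli of radius $2^k\sqrt t$. On each annulus the ball integral is bounded by $(2^k\sqrt t)^{N-\frac{N-2s}{2}}\|u\|_{\mathcal{M}}$, and the Gaussian decay makes the sum over $k$ converge. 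Plugging this in gives the lemma for $\vartheta=1$, $\iota=2/2_s^*$, and hence for all stated $\vartheta$ by the Hölder reduction above.

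The case $s=1$ is the same argument with $D^{1,2}$. I expect the main obstacle to be the improved Sobolev inequality, namely choosing the frequency cutoff so that both pieces scale correctly. Extending to $\iota>2/2_s^*$ also needs care: I would write $\|u\|_{\mathcal M}^{1-2/2_s^*}=\|u\|_{\mathcal M}^{1-\iota}\|u\|_{\mathcal M}^{\iota-2/2_s^*}$ and bound the second factor by $C\|u\|_{D^{s,2}}^{\iota-2/2_s^*}$.
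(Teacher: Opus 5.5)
Your route is sound. It is the standard argument for the refined Sobolev inequality of Palatucci--Pisante, which the paper quotes without proof: the G\'erard--Meyer--Oru inequality with $\dot{B}^{-\frac{N-2s}{2}}_{\infty,\infty}$, then the embedding of $\mathcal{M}^{1,\frac{N-2s}{2}}(\mathbb{R}^N)$ into that Besov space via the heat kernel split into dyadic annuli, then H\"older reductions in $\vartheta$ and $\iota$. The layer-cake/Littlewood--Paley sketch is correct; the identity $(2_s^*-2)\frac{N}{2_s^*}=2s$ is what makes the high-frequency part close. The final interpolation in $\iota$, using $\|u\|_{\mathcal{M}}\leqslant C\|u\|_{L^{2_s^*}}\leqslant C\|u\|_{D^{s,2}}$, is also correct.

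\textbf{The H\"older comparison of Morrey norms is stated backwards.} H\"older on $B(x,R)$ gives
\[
R^{\frac{N-2s}{2}-N}\int_{B(x,R)}|u|\,\mathrm{d}y\leqslant |B(0,1)|^{1-\frac{1}{\vartheta}}\Bigl(R^{\frac{\vartheta(N-2s)}{2}-N}\int_{B(x,R)}|u|^{\vartheta}\,\mathrm{d}y\Bigr)^{\frac{1}{\vartheta}},
\]
that is, $\|u\|_{\mathcal{M}^{1,\frac{N-2s}{2}}}\leqslant C\|u\|_{\mathcal{M}^{\vartheta,\frac{\vartheta(N-2s)}{2}}}$. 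This is the reverse of what you wrote.

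Your version is false in general, since the inclusion $\mathcal{M}^{\vartheta,\frac{\vartheta(N-2s)}{2}}\subset\mathcal{M}^{1,\frac{N-2s}{2}}$ is strict for $\vartheta>1$. If your version were the relevant inequality, proving the estimate for $\vartheta=1$ would not give it for $\vartheta>1$, because you cannot replace the norm on the right-hand side by a smaller one.

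With the correct direction, $\vartheta=1$ is the strongest case, and your concluding step ``hence for all stated $\vartheta$'' is justified. All constants involved ($|B(0,1)|^{1-1/\vartheta}$, $|B(0,1)|^{1/\vartheta-1/2_s^*}$, and their powers $\iota-\frac{2}{2_s^*}$) stay bounded for $\vartheta\in[1,2_s^*)$ and $\iota\in[\frac{2}{2_s^*},1)$. So you get one $C$, as the statement requires.

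\textbf{A minor point.} The quantity $\sup_{t>0}t^{\frac{N-2s}{4}}\|e^{t\Delta}u\|_{L^\infty}$ is equivalent to the $\dot{B}^{-\frac{N-2s}{2}}_{\infty,\infty}$ norm, not equal to it. In the layer-cake argument you only need
\[
\|S_j u\|_{L^\infty}\leqslant C2^{j\frac{N-2s}{2}}\|u\|_{\mathcal{M}^{1,\frac{N-2s}{2}}}.
\]
Your annulus estimate, applied to the Schwartz kernel of $S_j$ instead of the Gaussian, gives this directly. That lets you bypass the Besov space entirely.
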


By Lemma \ref{Lemma3.2}, we prove a generalized version of Lieb's translation theorem.

\begin{theorem}\label{Theorem3.2}
Suppose that $N\geqslant3$ and $s\in(0,1)$ hold.
Let $\{u_{n}\}\subset X$ be any bounded sequence satisfying:
\begin{equation*}
\begin{aligned}
\lim\limits_{n\to\infty}
\int_{\mathbb{R}^{N}}
|u_{n}|^{2_{s}^{*}}
\mathrm{d}x>0
~{\rm and}~
\lim\limits_{n\to\infty}
\int_{\mathbb{R}^{N}}
|u_{n}|^{2^{*}}
\mathrm{d}x>0.
\end{aligned}
\end{equation*}
Then there exists $\{x_{n}\}\subset \mathbb{R}^{N}$ such that $\{\bar{u}_{n}:=u_{n}(x+x_{n})\}$ converges strongly to $\bar{u}\not\equiv0$ in $L^{2}_{loc}(\mathbb{R}^{N})$.
\end{theorem}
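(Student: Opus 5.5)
The plan is to combine Lemma \ref{Lemma3.2} (Palatucci's improved Sobolev inequality with a Morrey norm) with the scaling structure of the Morrey norm. We extract from it a ball on which $u_n$ keeps a definite amount of $L^{2}$ mass, and then show that the radii of these balls cannot degenerate, because both critical norms are bounded away from zero. I would apply Lemma \ref{Lemma3.2} with $s$ and with $s=1$, taking $\vartheta=2$. The relevant Morrey spaces are then $\mathcal{M}^{2,N-2s}$ and $\mathcal{M}^{2,N-2}$, with norms
\begin{equation*}
\sup_{R,x}R^{-2s}\int_{B(x,R)}|u|^{2}\,\mathrm{d}x
\quad\text{and}\quad
\sup_{R,x}R^{-2}\int_{B(x,R)}|u|^{2}\,\mathrm{d}x .
\end{equation*}

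\textbf{Step 1: lower bounds on the Morrey norms.} Since $\{u_n\}$ is bounded in $X$, Lemma \ref{Lemma2.1} bounds it in $D^{s,2}(\mathbb{R}^N)$ and in $D^{1,2}(\mathbb{R}^N)$. The hypothesis $\liminf\|u_n\|_{L^{2_s^*}}>0$ together with Lemma \ref{Lemma3.2} gives $\|u_n\|_{\mathcal{M}^{2,N-2s}}\geqslant c>0$. Likewise, $\liminf\|u_n\|_{L^{2^*}}>0$ gives $\|u_n\|_{\mathcal{M}^{2,N-2}}\geqslant c>0$. Hence there exist $x_n\in\mathbb{R}^N$ and $R_n>0$ with $R_n^{-2s}\int_{B(x_n,R_n)}|u_n|^2\geqslant c/2$. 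Similarly there exist $y_n$ and $\rho_n$ with $\rho_n^{-2}\int_{B(y_n,\rho_n)}|u_n|^2\geqslant c/2$.

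\textbf{Step 2: controlling the radii.} By H\"older's inequality on a ball,
\begin{equation*}
\int_{B(x,R)}|u|^{2}\leqslant CR^{2}\|u\|_{L^{2^*}}^{2}
\quad\text{and}\quad
\int_{B(x,R)}|u|^{2}\leqslant CR^{2s}\|u\|_{L^{2_s^*}}^{2}.
\end{equation*}
Applying the first bound to the $s$-ball gives $c\leqslant CR_n^{2-2s}$, so $R_n\geqslant r_0>0$. Applying the second bound to the $1$-ball gives $c\leqslant C\rho_n^{2s-2}$, so $\rho_n\leqslant r_1<\infty$.

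So the nonlocal information keeps the radius from collapsing, and the local information keeps it from blowing up. The main obstacle is that these two bounds concern two different balls. I need a single ball of comparable size that carries mass. My first attempt is to interpolate the Morrey quantities. For any ball, H\"older gives $R^{-2s}\int_{B}|u|^2\leqslant CR^{2-2s}\|u\|^2_{2^*}$, so $s$-balls with $R\to0$ carry vanishing normalized mass, and this confirms $R_n\geqslant r_0$. For an upper bound on $R_n$, I would try to exploit $\|u_n\|_{L^{2_s^*}}$ bounded below, and possibly use an intermediate exponent $q\in(2_s^*,2^*)$ through Lemma \ref{Lemma2.2}.

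If this does not close, I would fall back on the following route. Interpolation between the two critical norms gives $\liminf\int|u_n|^q>0$ for some $q\in(2_s^*,2^*)$. Theorem \ref{Theorem3.1} then gives translations $z_n$ with $u_n(\cdot+z_n)\to\bar u\not\equiv0$ in $L^q_{loc}$. Local $L^q$ convergence implies $L^2_{loc}$ convergence, since the embedding $X\hookrightarrow L^2_{loc}$ is compact. The interpolation step needs care: a lower bound on both endpoint norms alone does not force a lower bound on $\int|u_n|^q$. Here I would use the unit-scale ball from Step 2 on which $\int_{B(y_n,\rho_n)}|u_n|^2\geqslant c\rho_n^2$, with $\rho_n\leqslant r_1$. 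This rules out dichotomy into only very spread-out or very concentrated pieces, and Lemma \ref{Lemma3.1} can then be used in reverse.

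\textbf{Step 3: conclusion.} Once a ball $B(x_n,R)$ with $R\in[r_0,r_1]$ and $\int_{B(x_n,R)}|u_n|^2\geqslant c'>0$ is secured, I set $\bar u_n=u_n(\cdot+x_n)$. Since $\|\bar u_n\|_X=\|u_n\|_X$, the sequence $\{\bar u_n\}$ is bounded in $X$, so up to a subsequence $\bar u_n\rightharpoonup\bar u$ in $X$. By Rellich compactness on bounded sets (from the $D^{1,2}$ part), $\bar u_n\to\bar u$ in $L^2_{loc}$. Passing to the limit gives $\int_{B(0,r_1)}|\bar u|^2\geqslant c'$, so $\bar u\not\equiv0$.
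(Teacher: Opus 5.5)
There is a genuine gap, and it sits exactly where you flag ``the main obstacle''. Nothing in your proposal produces a single ball whose radius stays in a compact subset of $(0,\infty)$ and which carries a fixed amount of $L^{2}$ mass. Equivalently, nothing gives a lower bound on $\int_{\mathbb{R}^N}|u_n|^{q}$ for some $q\in(2_s^*,2^*)$. Your bounds $R_n\geqslant r_0$ (for the $\mathcal{M}^{2,N-2s}$-balls) and $\rho_n\leqslant r_1$ (for the $\mathcal{M}^{2,N-2}$-balls) are correct, but they are compatible with $R_n\to\infty$ and $\rho_n\to0$. The fallback claim that they ``rule out dichotomy into only very spread-out or very concentrated pieces'' is false, and no argument can close this gap, because the statement fails as written. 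Take nonzero $\phi,\psi\in C_0^\infty(\mathbb{R}^N)$, $\lambda_n\to\infty$, and
\begin{equation*}
u_n(x)=\lambda_n^{\frac{N-2}{2}}\phi(\lambda_n x)+\lambda_n^{-\frac{N-2s}{2}}\psi(x/\lambda_n).
\end{equation*}
By scaling, the first piece has fixed $D^{1,2}$ and $L^{2^*}$ norms, its $D^{s,2}$ seminorm equals $\lambda_n^{s-1}[\phi]_{s}$, and its $L^{2_s^*}$ norm tends to $0$. The second piece has fixed $D^{s,2}$ and $L^{2_s^*}$ norms, its $D^{1,2}$ seminorm equals $\lambda_n^{s-1}\|\nabla\psi\|_{L^2}$, and its $L^{2^*}$ norm tends to $0$. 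Hence $\{u_n\}$ is bounded in $X$, with $\int|u_n|^{2^*}\to\int|\phi|^{2^*}>0$ and $\int|u_n|^{2_s^*}\to\int|\psi|^{2_s^*}>0$. Yet for every $\{x_n\}$ and every $R>0$,
\begin{equation*}
\|u_n(\cdot+x_n)\|_{L^2(B(0,R))}\leqslant\lambda_n^{-1}\|\phi\|_{L^2}+|B(0,R)|^{\frac12}\lambda_n^{-\frac{N-2s}{2}}\|\psi\|_{L^\infty}\to0,
\end{equation*}
so every $L^2_{loc}$ limit of a translated subsequence is $0$. In this example the $\mathcal{M}^{2,N-2s}$-norm is carried by balls of radius comparable to $\lambda_n$, and the $\mathcal{M}^{2,N-2}$-norm by balls of radius comparable to $\lambda_n^{-1}$. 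Moreover $\int|u_n|^{q}\to0$ for every $q\in(2_s^*,2^*)$, so Theorem \ref{Theorem3.1} cannot be invoked either.

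For comparison, the paper runs the same $\mathcal{M}^{2,N-2}$ argument as your Steps 1--2 and then tries to exclude $\sigma_n\to0$. It does so by a localized Sobolev inequality $\|u_n\|^{2}_{L^{2^*}(B(z,\sigma_n))}\leqslant S_1^{-1}\int_{B(z,\sigma_n)}|\nabla u_n|^2$, summed over a covering. That inequality is false: it fails for any function that is constant and nonzero on the ball. Nothing in that chain uses the special choice of $\sigma_n$, so it would give $\int_{\mathbb{R}^N}|u|^{2_s^*}\leqslant C\sigma^{N(1-2_s^*/2^*)}\|u\|_X^{2_s^*}$ for every $\sigma>0$, forcing every $u\in X$ to vanish. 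So the paper's proof has the same hole you ran into. What is actually needed is an additional hypothesis such as $\liminf_n\int_{\mathbb{R}^N}|u_n|^{q}>0$ for some $q\in(2_s^*,2^*)$. Under it, your fallback route (Theorem \ref{Theorem3.1} followed by Rellich compactness on balls) proves the conclusion. Conversely, the conclusion implies this non-vanishing, since $L^2_{loc}$ convergence together with boundedness in $L^{2^*}$ gives $L^{q}_{loc}$ convergence.
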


\begin{proof}
We divide this proof into three steps.

{\bf Step 1.}
Since
$\{u_{n}\}\subset X$
is a bounded sequence,
up to a subsequence,
we assume that
\begin{equation*}
\begin{aligned}
u_{n}\rightharpoonup u
~
\mathrm{in}
~
X,~\
u_{n}\rightarrow u
~
\mathrm{a.e. ~in}
~
\mathbb{R}^{N},\ \,
u_{n}\rightarrow u
~
\mathrm{in}
~
L^{2}_{loc}(\mathbb{R}^{N}).
\end{aligned}
\end{equation*}
It follows from Lemma \ref{Lemma3.2} and $\lim\limits_{n\to\infty}
\int_{\mathbb{R}^{N}}
|u_{n}|^{2^{*}}
\mathrm{d}x>0$ that there exists
$C>0$
such that 
\begin{equation*}
\begin{aligned}
\|u_{n}\|_{\mathcal{M}^{2,N-2}(\mathbb{R}^{N})}\geqslant C>0.
\end{aligned}
\end{equation*}

We have the following chain
\begin{equation*}
\begin{aligned}
X
\hookrightarrow
D^{1,2}(\mathbb{R}^{N})
\hookrightarrow L^{2^{*}}(\mathbb{R}^{N})\hookrightarrow \mathcal{M}^{2,N-2}(\mathbb{R}^{N}).
\end{aligned}
\end{equation*}
Then
\begin{equation*}
\begin{aligned}
\|u_{n}\|_{\mathcal{M}^{2,N-2}(\mathbb{R}^{N})}\leqslant C.
\end{aligned}
\end{equation*}
Hence, 
\begin{equation*}
\begin{aligned}
C_0
\leqslant
\|u_{n}\|_{\mathcal{M}^{2,N-2}(\mathbb{R}^{N})}\leqslant C_0^{-1}.
\end{aligned}
\end{equation*}
where $C_{0}$ is independent of $n$.
From the latter inequality,
one deduces that
there exist
$\sigma_{n} > 0$
and
$x_{n}\in \mathbb{R}^{N}$
such that
\begin{equation}\label{3.2}
\begin{aligned}
\sigma_{n}^{-2}
\int_{B(x_{n},\sigma_{n})}
|u_{n}(y)|^{2}
\mathrm{d}y
\geqslant
\|u_{n}\|_{\mathcal{M}^{2,N-2}(\mathbb{R}^{N})}^{2}
-
\frac{C}{2n}
\geqslant
C_{1}>0,
\end{aligned}
\end{equation}
where $C_{1}$ is independent of $n$.

{\bf Step 2.}
First, we show that
$\lim\limits_{n\rightarrow\infty}\sigma_{n}\not=\infty$.
We argue by contradiction, we suppose that
$\lim\limits_{n\rightarrow\infty}\sigma_{n}=\infty.$
Using the boundedness of $\{u_{n}\}$,
we get that
\begin{equation*}
\begin{aligned}
0<
\lim_{n\rightarrow\infty}
\int_{\mathbb{R}^N}
|u_{n}|^{2_{s}^{*}}
\mathrm{d}y
\leqslant C.
\end{aligned}
\end{equation*}
It follows for $N\geqslant3$ and $s\in(0,1)$ that
\begin{equation}\label{3.3}
\begin{aligned}
-2+N(1-\frac{2}{2_{s}^{*}})<0.
\end{aligned}
\end{equation}
Using \eqref{3.2} and \eqref{3.3}, we get that
\begin{equation*}
\begin{aligned}
0<
C_{1}
\leqslant&
\sigma_{n}^{-2}
\int_{B(x_{n},\sigma_{n})}
|u_{n}(y)|^{2}
\mathrm{d}y\\
\leqslant&
\sigma_{n}^{-2}
\left(
\int_{B(0,\sigma_{n})}
\mathrm{d}y
\right)^{\frac{2_{s}^{*}-2}{2_{s}^{*}}}
\left(
\int_{B(x_{n},\sigma_{n})}
|u_{n}(y)|^{2_{s}^{*}}
\mathrm{d}y
\right)^{\frac{2}{2_{s}^{*}}}\\
=&
\sigma_{n}^{-2}
\left(
\frac{\omega_{N-1}}{N}
\sigma_{n}^{N}
\right)^{\frac{2_{s}^{*}-2}{2_{s}^{*}}}
\left(
\int_{B(x_{n},\sigma_{n})}
|u_{n}(y)|^{2_{s}^{*}}
\mathrm{d}y
\right)^{\frac{2}{2_{s}^{*}}}\\
\leqslant&
C
\sigma_{n}^{-2+N(1-\frac{2}{2_{s}^{*}})}
\to 0, \ \text{as}\ n\rightarrow\infty.
\end{aligned}
\end{equation*}
This yields to a contradiction.
By the Bolzano--Weierstrass theorem, up to a subsequence, still denoted by $\{\sigma_{n}\}$,
there exists $\bar{\sigma}\in[0,\infty)$ such that
\begin{equation*}
\begin{aligned}
\lim_{n\rightarrow\infty}\sigma_{n}=\bar{\sigma}.
\end{aligned}
\end{equation*}

Second, we show that $\lim\limits_{n\rightarrow\infty}\sigma_{n}=\bar{\sigma}\not=0$.
Suppose on the contrary that
$\lim\limits_{n\rightarrow\infty}\sigma_{n}=\bar{\sigma}=0$.
Using the boundedness of $\{u_{n}\}$,
we have that
\begin{equation*}
\begin{aligned}
C
\lim_{n\rightarrow\infty}
\left(
\int_{\mathbb{R}^N}
|u_{n}|^{2_{s}^{*}}
\mathrm{d}y
\right)^{\frac{2}{2_{s}^{*}}}
\leqslant
\lim_{n\rightarrow\infty}
\|u_{n}\|_{X}^{2}
\leqslant \bar{C}.
\end{aligned}
\end{equation*}

It follows from H\"{o}lder's and Sobolev's inequalities that
\begin{equation*}
\begin{aligned}
\int_{B(0,\sigma_{n})}
|u_{n}|^{2_{s}^{*}}
\mathrm{d}y
\leqslant&
\left(
\int_{B(0,\sigma_{n})}
\mathrm{d}y
\right)^{\frac{2^{*}-2_{s}^{*}}{2^{*}}}
\left(
\int_{B(0,\sigma_{n})}
|u_{n}|^{2^{*}}
\mathrm{d}y
\right)^{\frac{2_{s}^{*}}{2^{*}}}\\
\leqslant&
S_{1}^{-\frac{2_{s}^{*}}{2}}
\left(
\int_{B(0,\sigma_{n})}
\mathrm{d}y
\right)^{\frac{2^{*}-2_{s}^{*}}{2^{*}}}
\left(
\int_{B(0,\sigma_{n})}
|\nabla u_{n}|^{2}
{\rm d}y
\right)^{\frac{2_{s}^{*}}{2}}\\
\leqslant&
S_{1}^{-\frac{2_{s}^{*}}{2}}
\left(
\int_{B(0,\sigma_{n})}
\mathrm{d}y
\right)^{\frac{2^{*}-2_{s}^{*}}{2^{*}}}
\|u_{n}\|_{D^{1,2}(\mathbb{R}^{N})}^{2_{s}^{*}-2}
\int_{B(0,\sigma_{n})}
|\nabla u_{n}|^{2}
{\rm d}y\\
\leqslant&
C
S_{1}^{-\frac{2_{s}^{*}}{2}}
\left(
\int_{B(0,\sigma_{n})}
\mathrm{d}y
\right)^{\frac{2^{*}-2_{s}^{*}}{2^{*}}}
\int_{B(0,\sigma_{n})}
|\nabla u_{n}|^{2}
{\rm d}y.
\end{aligned}
\end{equation*}
Similarly, for each $z\in \mathbb{R}^N$ we have that
\begin{equation*}
\begin{aligned}
\int_{B(z,\sigma_{n})}
|u_{n}|^{2_{s}^{*}}
\mathrm{d}y
\leqslant
CS_{1}^{-\frac{2_{s}^{*}}{2}}
\left(
\int_{B(0,\sigma_{n})}
\mathrm{d}y
\right)^{\frac{2^{*}-2_{s}^{*}}{2^{*}}}
\int_{B(z,\sigma_{n})}
|\nabla u_{n}|^{2}
{\rm d}y.
\end{aligned}
\end{equation*}
Covering
$\mathbb{R}^N$
with balls of radius
$\sigma_{n}$,
in such a way that each point of
$\mathbb{R}^N$
is contained in at most
$N + 1$
balls, one gets
\begin{equation*}
\begin{aligned}
\int_{\mathbb{R}^N}
|u_{n}|^{2_{s}^{*}}
\mathrm{d}y
\leqslant&
C(N+1)S_{1}^{-\frac{2_{s}^{*}}{2}}
\left(
\int_{B(0,\sigma_{n})}
\mathrm{d}y
\right)^{\frac{2^{*}-2_{s}^{*}}{2^{*}}}
\int_{\mathbb{R}^{N}}
|\nabla u_{n}|^{2}
{\rm d}y\\
\leqslant&
C(N+1)S_{1}^{-\frac{2_{s}^{*}}{2}}
\left(
\int_{B(0,\sigma_{n})}
\mathrm{d}y
\right)^{\frac{2^{*}-2_{s}^{*}}{2^{*}}}
\\
\leqslant&
C(N+1)S_{1}^{-\frac{2_{s}^{*}}{2}}
\sigma_{n}^{N(1-\frac{2_{s}^{*}}{2^{*}})}.
\end{aligned}
\end{equation*}
By our assumption $\lim\limits_{n\rightarrow\infty}\sigma_{n}=0$, it follows that
\begin{equation*}
\begin{aligned}
\lim\limits_{n\rightarrow\infty}
\int_{\mathbb{R}^N}
|u_{n}|^{2_{s}^{*}}
\mathrm{d}y
\leqslant&
C(N+1)S_{1}^{-\frac{2_{s}^{*}}{2}}
\lim\limits_{n\rightarrow\infty}
\sigma_{n}^{N(1-\frac{2_{s}^{*}}{2^{*}})}
=
0,
\end{aligned}
\end{equation*}
which contradicts the facts that $\lim\limits_{n\rightarrow\infty}
\int_{\mathbb{R}^N}
|u_{n}|^{
2_{s}^{*}
}
\mathrm{d}x>0$.

{\bf Step 3.}
Using $\lim\limits_{n\rightarrow\infty}\sigma_{n}=\bar{\sigma}\not=0$,
up to a subsequence,
we have
$\{\sigma_{n}\}\subset (\frac{\bar{\sigma}}{2},2\bar{\sigma})$
and
\begin{equation*}
\begin{aligned}
\frac{2^{2}}{\bar{\sigma}^{2}}
\int_{B(x_{n},2\bar{\sigma})}
|u_{n}(y)|^{2}
\mathrm{d}y
\geqslant
C_{1}>0,
\end{aligned}
\end{equation*}
which gives us:
\begin{equation}\label{3.4}
\begin{aligned}
\int_{B(x_{n},2\bar{\sigma})}
|u_{n}(y)|^{2}
\mathrm{d}y
\geqslant
\frac{C_{1}\bar{\sigma}^{2}}{2^{2s}}>0.
\end{aligned}
\end{equation}
Set $\bar{u}_{n}:=u_{n}(x+x_{n})$. Then $\{u_{n}\}\subset X$ is a bounded sequence satisfying:
\begin{equation*}
\begin{aligned}
\lim\limits_{n\to\infty}
\int_{\mathbb{R}^{N}}
|\bar{u}_{n}|^{2_{s}^{*}}
\mathrm{d}x>0
~{\rm and}~
\lim\limits_{n\to\infty}
\int_{\mathbb{R}^{N}}
|\bar{u}_{n}|^{2^{*}}
\mathrm{d}x>0.
\end{aligned}
\end{equation*}
and from \eqref{3.4},
\begin{equation}\label{3.5}
\begin{aligned}
\int_{B(0,2\bar{\sigma})}
|\bar{u}_{n}(y)|^{2}
\mathrm{d}y
\geqslant
\frac{C_{1}\bar{\sigma}^{2}}{2^{2s}}>0.
\end{aligned}
\end{equation}
Using the combinations of the embedding
$X
\hookrightarrow
D^{1,2}(\mathbb{R}^{N})
\hookrightarrow
L^{2}_{loc}(\mathbb{R}^{N})$ and \eqref{3.5},
we obtain $\bar{u}_{n}\rightharpoonup \bar{u}\not\equiv0$.
\end{proof}

\section{Mountain pass geometry and Nehari manifold}
The energy functional corresponding to the equation \eqref{D} is
\begin{equation*}
\begin{aligned}
I(u)=
\frac{1}{2}
\|u\|_{X}^{2}
-
\frac{1}{2_{s}^{*}}
\int_{\mathbb{R}^{N}}
|u|^{2_{s}^{*}}
\mathrm{d}x
-
\frac{\beta}{p}
\int_{\mathbb{R}^{N}}
|u|^{p}
\mathrm{d}x
-
\frac{1}{2^{*}}
\int_{\mathbb{R}^{N}}
|u|^{2^{*}}
\mathrm{d}x
.
\end{aligned}
\end{equation*}
For any $\phi\in X$, the critical points of $I$ satisfy
\begin{equation*}
\begin{aligned}
0=\langle I'(u),\phi\rangle
=&
\int_{\mathbb{R}^{N}}
\nabla u
\nabla \phi
\mathrm{d}x
+
\int_{\mathbb{R}^{N}}
\int_{\mathbb{R}^{N}}
\frac{(u(x)-u(y))(\phi(x)-\phi(y))}{|x-y|^{N+2s}}
\mathrm{d}x
\mathrm{d}y\\
&
-
\int_{\mathbb{R}^{N}}
|u|^{2_{s}^{*}-1}\phi
\mathrm{d}x
-\beta
\int_{\mathbb{R}^{N}}
|u|^{p-1}\phi
\mathrm{d}x
-
\int_{\mathbb{R}^{N}}
|u|^{2^{*}-1}\phi
\mathrm{d}x
.
\end{aligned}
\end{equation*}

\begin{lemma}\label{Lemma4.1}
Let $N\geqslant3$ and $0<s<1$.
Then the functional $I$ has mountain pass geometric structure.
\end{lemma}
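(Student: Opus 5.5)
We verify the two standard mountain pass conditions for $I$ on $X$: (a) there exist $\rho>0$ and $\alpha>0$ such that $I(u)\geqslant\alpha$ for all $u\in X$ with $\|u\|_{X}=\rho$; and (b) there exists $e\in X$ with $\|e\|_{X}>\rho$ and $I(e)<0$. Since $I(0)=0$, these together give the mountain pass geometry. We also note that $I\in C^{1}(X,\mathbb{R})$. This follows from Lemma \ref{Lemma2.2}, because all three exponents $2_{s}^{*}$, $p$ and $2^{*}$ lie in $[2_{s}^{*},2^{*}]$.

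For (a), we bound each nonlinear term by a power of $\|u\|_{X}$. Lemma \ref{Lemma2.1} together with Lemma \ref{Lemma2.3} (with $s$ and with $s=1$) gives
\begin{equation*}
\int_{\mathbb{R}^{N}}|u|^{2_{s}^{*}}\mathrm{d}x\leqslant S_{s}^{-\frac{2_{s}^{*}}{2}}\|u\|_{X}^{2_{s}^{*}},\qquad
\int_{\mathbb{R}^{N}}|u|^{2^{*}}\mathrm{d}x\leqslant S_{1}^{-\frac{2^{*}}{2}}\|u\|_{X}^{2^{*}}.
\end{equation*}
For the middle term, interpolation between $L^{2_{s}^{*}}$ and $L^{2^{*}}$ (as in the proof of Lemma \ref{Lemma2.2}) gives $\int|u|^{p}\leqslant C\|u\|_{X}^{p}$. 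Combining these,
\begin{equation*}
I(u)\geqslant \frac12\|u\|_{X}^{2}-C_{1}\|u\|_{X}^{2_{s}^{*}}-C_{2}\beta\|u\|_{X}^{p}-C_{3}\|u\|_{X}^{2^{*}}.
\end{equation*}
Since $2<2_{s}^{*}<p<2^{*}$, we can choose $\rho=\rho(\beta)>0$ small so that the right-hand side is at least $\frac14\rho^{2}=:\alpha>0$ whenever $\|u\|_{X}=\rho$. For fixed $\beta$ this requires no smallness of $\beta$.

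For (b), fix any $\varphi\in C_{0}^{\infty}(\mathbb{R}^{N})\setminus\{0\}$ and look at $t\mapsto I(t\varphi)$:
\begin{equation*}
I(t\varphi)=\frac{t^{2}}{2}\|\varphi\|_{X}^{2}-\frac{t^{2_{s}^{*}}}{2_{s}^{*}}\int|\varphi|^{2_{s}^{*}}-\frac{\beta t^{p}}{p}\int|\varphi|^{p}-\frac{t^{2^{*}}}{2^{*}}\int|\varphi|^{2^{*}}.
\end{equation*}
The quadratic term is dominated by the $-t^{2^{*}}$ term as $t\to\infty$, and the other two nonlinear terms are nonpositive. Hence $I(t\varphi)\to-\infty$, and we take $e=t_{0}\varphi$ with $t_{0}$ large enough that $\|e\|_{X}>\rho$ and $I(e)<0$.

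The only step needing care is (a): the constants must come from the correct embeddings, since $X$ controls both homogeneous seminorms but not the $L^{2}$ norm. Lemmas \ref{Lemma2.1}–\ref{Lemma2.3} already supply exactly these embeddings, so we expect no real obstacle. The genuine difficulty, the lack of compactness caused by the two critical exponents, is not part of this lemma.
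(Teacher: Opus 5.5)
Your proposal is correct and follows essentially the same route as the paper: a lower bound $I(u)\geqslant \frac12\|u\|_{X}^{2}-C\|u\|_{X}^{2_{s}^{*}}-C\|u\|_{X}^{p}-C\|u\|_{X}^{2^{*}}$ from the embeddings in Lemma \ref{Lemma2.2}, which gives positivity on a small sphere, followed by $I(t\varphi)\to-\infty$ along a ray because of the $t^{2^{*}}$ term. The extras you add (explicit Sobolev constants, the $C^{1}$ remark, choosing $\varphi\in C_{0}^{\infty}$) are harmless refinements of the same argument.
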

\begin{proof}
Using Lemma \ref{Lemma2.2}, one has
\begin{equation*}
\begin{aligned}
I(u)
\geqslant \frac{1}{2}\|u\|_{X}^{2}
-C\|u\|_{X}^{p}
-C\|u\|_{X}^{2_{s}^{*}}
-C\|u\|_{X}^{2^{*}}.
\end{aligned}
\end{equation*}
Recall that $2<2_{s}^{*}<p<2^{*}$. There exists a sufficiently small positive number $\rho$ such that
\begin{equation*}
\begin{aligned}
\varsigma :=
\inf_{\|u\|_{X}=\rho}
I(u)>0=I(0).
\end{aligned}
\end{equation*}
For $u\in X\setminus\{0\}$, we have
\begin{equation*}
\begin{aligned}
I(tu)
=
\frac{t^2}{2}\|u\|_{X}^{2}
-\beta\frac{t^{p}}{p}\int_{\mathbb{R}^{N}}|u|^{p}\mathrm{d}x
-\frac{t^{2_{s}^{*}}}{2_{s}^{*}}\int_{\mathbb{R}^{N}}|u|^{2_{s}^{*}}\mathrm{d}x
-\frac{t^{2^{*}}}{2^{*}}\int_{\mathbb{R}^{N}}|u|^{2^{*}}\mathrm{d}x.
\end{aligned}
\end{equation*}
From $2<2_{s}^{*}<p<2^{*}$, it follows that $I(tu)<0$ for $t$ large enough.

From the above,
we can choose $t_{0}>0$ corresponding to $u$ such that $I(t_{0}u)<0$ for $t>t_{0}$ and $\|t_{0}u\|_{X}>\rho$.
\end{proof}

We set the mountain pass level
\begin{equation*}
\begin{aligned}
c=\inf\limits_{\gamma\in \Gamma}\sup\limits_{t\in[0, 1]}I(\gamma (t))>0,
\end{aligned}
\end{equation*}
and
\begin{equation*}
\begin{aligned}
\Gamma=\{\gamma\in C\left([0, 1], X\right)| \gamma(0)=0, I(\gamma (1))<0\}.
\end{aligned}
\end{equation*}
We recall the $(PS)_{c}$ sequence as follows.
\begin{definition}
If sequence $\{u_{n}\}\subset X$ satisfies the condition
\begin{equation*}
\begin{aligned}
I(u_{n})\rightarrow c
~\mathrm{and}~I'(u_{n})\rightarrow 0
~\mathrm{in}~X^{-1},
~{\rm as}~n\to\infty.
\end{aligned}
\end{equation*}
Then $\{u_{n}\}$ is called the Palais-Smale sequence of $I$ with respect to $c$, short for $(PS)_{c}$ sequence, 
where $X^{-1}$ is the dual space of $X$.
\end{definition}

It follows from the mountain pass Theorem \cite{Ambrosetti-Rabinowitz1973JFA} and Lemma \ref{Lemma4.1} that there exists a $(PS)_{c}$ sequence.

We now set the Nehari manifold as follows
\begin{equation*}
\begin{aligned}
\mathcal{N}=\{u\in X \setminus\{0\}|\langle I'(u),u\rangle=0\}.
\end{aligned}
\end{equation*}

\begin{lemma}\label{Lemma4.2}
Let $N\geqslant3$ and $0<s<1$.
Then for any $u\in X\setminus\{0\}$, there exists a unique $t_{u}>0$ such that $t_{u}u\in \mathcal{N}$ and $I(t_{u}u)=\max\limits_{t>0}I(tu)$.
\end{lemma}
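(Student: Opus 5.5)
Fix $u\in X\setminus\{0\}$ and study the fibering map $g(t):=I(tu)$ for $t>0$. Write
\begin{equation*}
A=\|u\|_{X}^{2},\quad B=\int_{\mathbb{R}^{N}}|u|^{2_{s}^{*}}\mathrm{d}x,\quad D=\beta\int_{\mathbb{R}^{N}}|u|^{p}\mathrm{d}x,\quad E=\int_{\mathbb{R}^{N}}|u|^{2^{*}}\mathrm{d}x .
\end{equation*}
These quantities are finite by Lemma \ref{Lemma2.2}. Moreover $A>0$, and $B,E>0$ since $u\not\equiv0$. The key observation is that $tu\in\mathcal{N}$ exactly when $g'(t)=0$, because
\begin{equation*}
g'(t)=\langle I'(tu),u\rangle=\frac{1}{t}\langle I'(tu),tu\rangle .
\end{equation*}
So it suffices to show that $g$ has exactly one critical point on $(0,\infty)$ and that this point is a global maximum.

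Compute $g'(t)=t\,h(t)$ with
\begin{equation*}
h(t):=A-Bt^{2_{s}^{*}-2}-Dt^{p-2}-Et^{2^{*}-2}.
\end{equation*}
Since $2<2_{s}^{*}<p<2^{*}$ and $\beta>0$, all three exponents $2_{s}^{*}-2$, $p-2$ and $2^{*}-2$ are positive, and $B,E>0$, $D\geqslant0$. Hence $h$ is continuous on $(0,\infty)$ and strictly decreasing. Also $h(t)\to A>0$ as $t\to0^{+}$, and $h(t)\to-\infty$ as $t\to\infty$ because $E>0$. By the intermediate value theorem and strict monotonicity, there is a unique $t_{u}>0$ with $h(t_{u})=0$. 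This gives a unique $t_{u}>0$ with $t_{u}u\in\mathcal{N}$.

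For the maximality, note that $g'>0$ on $(0,t_{u})$ and $g'<0$ on $(t_{u},\infty)$. Thus $g$ increases and then decreases, so $I(t_{u}u)=\max_{t>0}I(tu)$. This is consistent with Lemma \ref{Lemma4.1}: there $g(t)>0$ for small $t$ and $g(t)\to-\infty$, so the maximum is positive and attained.

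There is no real obstacle here. The only points needing care are that every term of $I$ is finite on $X$, which follows from the continuous embedding of Lemma \ref{Lemma2.2}, and that all exponents exceed $2$ with positive coefficients, which is what makes $h$ strictly decreasing. Uniqueness rests entirely on this monotonicity.
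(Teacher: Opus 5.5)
Your proof is correct and follows the paper's argument: the paper studies $f_{2}(t)=\beta t^{p-2}\int_{\mathbb{R}^{N}}|u|^{p}\mathrm{d}x+t^{2_{s}^{*}-2}\int_{\mathbb{R}^{N}}|u|^{2_{s}^{*}}\mathrm{d}x+t^{2^{*}-2}\int_{\mathbb{R}^{N}}|u|^{2^{*}}\mathrm{d}x$, which is exactly $A-h(t)$, and uses its strict monotonicity and the intermediate value theorem to get a unique $t_{u}$ with $f_{2}(t_{u})=\|u\|_{X}^{2}$. Your sign-change argument for $g'$ also proves the maximality claim $I(t_{u}u)=\max_{t>0}I(tu)$, which the paper states but does not spell out.
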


\begin{proof}
For any $u\in X\setminus\{0\}$ and $t\in (0,\infty)$, we define
\begin{equation*}
\begin{aligned}
f_{1}(t)=I(tu)=&\frac{t^{2}}{2}\|u\|_{X}^{2}
-\beta\frac{t^{p}}{p}\int_{\mathbb{R}^{N}}|u|^{p}\mathrm{d}x
-\frac{t^{2_{s}^{*}}}{2_{s}^{*}}\int_{\mathbb{R}^{N}}|u|^{2_{s}^{*}}\mathrm{d}x
-\frac{t^{2^{*}}}{2^{*}}\int_{\mathbb{R}^{N}}|u|^{2^{*}}\mathrm{d}x.
\end{aligned}
\end{equation*}
We compute
\begin{equation*}
\begin{aligned}
f_{1}'(t)
=&t\|u\|_{X}^{2}
-\beta t^{p-1}\int_{\mathbb{R}^{N}}|u|^{p}\mathrm{d}x
-t^{2_{s}^{*}-1}\int_{\mathbb{R}^{N}}|u|^{2_{s}^{*}}\mathrm{d}x
-t^{2^{*}-1}\int_{\mathbb{R}^{N}}|u|^{2^{*}}\mathrm{d}x.
\end{aligned}
\end{equation*}
We know that $f_{1}'(\cdot)=0$ iff
\begin{equation*}
\begin{aligned}
\|u\|_{X}^{2}
=\beta t^{p-2}\int_{\mathbb{R}^{N}}|u|^{p}\mathrm{d}x
+t^{2_{s}^{*}-2}\int_{\mathbb{R}^{N}}|u|^{2_{s}^{*}}\mathrm{d}x
+t^{2^{*}-2}\int_{\mathbb{R}^{N}}|u|^{2^{*}}\mathrm{d}x.
\end{aligned}
\end{equation*}
Let
\begin{equation*}
\begin{aligned}
f_{2}(t)
=\beta t^{p-2}\int_{\mathbb{R}^{N}}|u|^{p}\mathrm{d}x
+t^{2_{s}^{*}-2}\int_{\mathbb{R}^{N}}|u|^{2_{s}^{*}}\mathrm{d}x
+t^{2^{*}-2}\int_{\mathbb{R}^{N}}|u|^{2^{*}}\mathrm{d}x.
\end{aligned}
\end{equation*}
Clearly, $\lim\limits_{t\rightarrow0}f_{2}(t)\rightarrow0$, $\lim\limits_{t\rightarrow+\infty}f_{2}(t)\rightarrow+\infty$ .
By using the intermediate value theorem, we know that there exists a $0<t_{u}<\infty$ such that
\begin{equation*}
\begin{aligned}
f_{2}(t_{u})=\|u\|_{X}^{2}.
\end{aligned}
\end{equation*}
Furthermore, it is easy to see that $f_{2}(\cdot)$ is strictly increasing on $(0,\infty)$.
Then we get the uniqueness of $t_{u}$.
And then,
\begin{equation*}
\begin{aligned}
\|u\|_{X}^{2}
=\beta t_{u}^{p-2}\int_{\mathbb{R}^{N}}|u|^{p}\mathrm{d}x
+t_{u}^{2_{s}^{*}-2}\int_{\mathbb{R}^{N}}|u|^{2_{s}^{*}}\mathrm{d}x
+t_{u}^{2^{*}-2}\int_{\mathbb{R}^{N}}|u|^{2^{*}}\mathrm{d}x,
\end{aligned}
\end{equation*}
which implies that
\begin{equation*}
\begin{aligned}
\|t_{u}u\|_{X}^{2}
=\beta
\int_{\mathbb{R}^{N}}|t_{u}u|^{p}\mathrm{d}x
+\int_{\mathbb{R}^{N}}|t_{u}u|^{2_{s}^{*}}\mathrm{d}x
+\int_{\mathbb{R}^{N}}|t_{u}u|^{2^{*}}\mathrm{d}x.
\end{aligned}
\end{equation*}
This implies that $t_{2}u\in \mathcal{N}$.
\end{proof}

\begin{lemma}\label{Lemma4.3}
Let $N\geqslant3$ and $0<s<1$.
Then we have
\begin{equation*}
\begin{aligned}
\bar{c}=\inf\limits_{u\in\mathcal{N}}I(u)>0.
\end{aligned}
\end{equation*}
\end{lemma}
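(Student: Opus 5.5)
The plan is to obtain two facts about $\mathcal{N}$. First, $\mathcal{N}$ stays uniformly away from the origin in $X$. Second, on $\mathcal{N}$ the energy controls $\|u\|_{X}^{2}$ from below.

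\textbf{Step 1 (lower bound on the norm).} Let $u\in\mathcal{N}$. The constraint $\langle I'(u),u\rangle=0$ reads
\begin{equation*}
\|u\|_{X}^{2}=\int_{\mathbb{R}^{N}}|u|^{2_{s}^{*}}\mathrm{d}x+\beta\int_{\mathbb{R}^{N}}|u|^{p}\mathrm{d}x+\int_{\mathbb{R}^{N}}|u|^{2^{*}}\mathrm{d}x .
\end{equation*}
Lemma \ref{Lemma2.2} applies with $t=2_{s}^{*},p,2^{*}$, all of which lie in $[2_{s}^{*},2^{*}]$. It gives
\begin{equation*}
\|u\|_{X}^{2}\leqslant C\bigl(\|u\|_{X}^{2_{s}^{*}}+\beta\|u\|_{X}^{p}+\|u\|_{X}^{2^{*}}\bigr).
\end{equation*}
Since $u\neq0$, we divide by $\|u\|_{X}^{2}$ and obtain
\begin{equation*}
1\leqslant C\bigl(\|u\|_{X}^{2_{s}^{*}-2}+\beta\|u\|_{X}^{p-2}+\|u\|_{X}^{2^{*}-2}\bigr).
\end{equation*}
All exponents are positive because $2<2_{s}^{*}<p<2^{*}$. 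Hence there is $\rho_{0}>0$, depending only on $N,s,p,\beta$, with $\|u\|_{X}\geqslant\rho_{0}$ for every $u\in\mathcal{N}$.

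\textbf{Step 2 (energy bound).} For $u\in\mathcal{N}$, write $I(u)=I(u)-\frac{1}{2_{s}^{*}}\langle I'(u),u\rangle$. This gives
\begin{equation*}
I(u)=\Bigl(\frac12-\frac1{2_{s}^{*}}\Bigr)\|u\|_{X}^{2}+\beta\Bigl(\frac1{2_{s}^{*}}-\frac1p\Bigr)\int_{\mathbb{R}^{N}}|u|^{p}\mathrm{d}x+\Bigl(\frac1{2_{s}^{*}}-\frac1{2^{*}}\Bigr)\int_{\mathbb{R}^{N}}|u|^{2^{*}}\mathrm{d}x .
\end{equation*}
The last two coefficients are nonnegative because $2_{s}^{*}<p<2^{*}$, so
\begin{equation*}
I(u)\geqslant\Bigl(\frac12-\frac1{2_{s}^{*}}\Bigr)\rho_{0}^{2}=\frac{s}{N}\rho_{0}^{2}>0 .
\end{equation*}
Taking the infimum over $\mathcal{N}$ gives $\bar{c}>0$. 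By Lemma \ref{Lemma4.2}, $\mathcal{N}\neq\emptyset$, so the infimum is finite.

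The choice of subtracting $\frac{1}{2_{s}^{*}}\langle I'(u),u\rangle$ is deliberate: $2_{s}^{*}$ is the smallest of the three nonlinear exponents, so every remaining term has a nonnegative sign. Subtracting with $\frac1p$ or $\frac1{2^{*}}$ would leave negative terms.

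\textbf{Expected obstacle.} There is no real difficulty; the only point requiring care is in Step 1. Because $X$ is defined via a homogeneous semi-norm, one must use Lemma \ref{Lemma2.2} (interpolating the two Sobolev inequalities) rather than an $H^{1}$ embedding. Even so, $\rho_{0}$ may depend on $\beta$, and that is harmless here since $\beta$ is fixed.
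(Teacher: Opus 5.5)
Your proposal is correct and follows essentially the same route as the paper. The paper also first gets a uniform lower bound $\|u\|_X\geqslant\rho_0>0$ on $\mathcal{N}$ from the embeddings of Lemma \ref{Lemma2.2}, and then uses $I(u)=I(u)-\frac{1}{2_s^*}\langle I'(u),u\rangle\geqslant\bigl(\frac12-\frac1{2_s^*}\bigr)\|u\|_X^2$ (as in the paper, the sign of the $p$-term tacitly uses $\beta>0$, the standing assumption of Theorem \ref{Theorem1.2}).
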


\begin{proof}
By applying $\langle I'(u),u\rangle=0$,
we know that
\begin{equation*}
\begin{aligned}
0
=\langle I'(u),u\rangle\geqslant \|u\|_{X}^{2}
-C\|u\|_{X}^{p}
-C\|u\|_{X}^{2_{s}^{*}}
-C\|u\|_{X}^{2^{*}},
\end{aligned}
\end{equation*}
which implies that
\begin{equation*}
\begin{aligned}
C\|u\|_{X}^{p-2}
+C\|u\|_{X}^{2_{s}^{*}-2}
+C\|u\|_{X}^{2^{*}-2}
\geqslant
1,
\end{aligned}
\end{equation*}
and
\begin{equation*}
\begin{aligned}
\|u\|_{X}^{2}
\geqslant C.
\end{aligned}
\end{equation*}
Then,
for $u\in \mathcal{N}$,
we get that
\begin{equation*}\label{36}
\begin{aligned}
I(u)
=&
I(u)-\frac{1}{2_{s}^{*}}\langle I'(u),u\rangle\\
=&\frac{1}{2}\|u\|_{X}^{2}
-\frac{1}{2_{s}^{*}}
\int_{\mathbb{R}^{N}}
|u|^{2_{s}^{*}}
\mathrm{d}x
-
\frac{\beta}{p}
\int_{\mathbb{R}^{N}}
|u|^{p}
\mathrm{d}x
-\frac{1}{2^{*}}
\int_{\mathbb{R}^{N}}
|u|^{2^{*}}
\mathrm{d}x\\
&-\frac{1}{2_{s}^{*}}
\left(\|u\|_{X}^{2}
-\int_{\mathbb{R}^{N}}
|u|^{2_{s}^{*}}
\mathrm{d}x
-\beta
\int_{\mathbb{R}^{N}}
|u|^{p}
\mathrm{d}x
-\int_{\mathbb{R}^{N}}
|u|^{2^{*}}
\mathrm{d}x
\right)\\
=&
\left(\frac{1}{2}-\frac{1}{ 2_{s}^{*}}\right)
\|u\|_{X}^{2}
+\beta
\left(\frac{1}{2_{s}^{*}}-\frac{1}{p}\right)
\int_{\mathbb{R}^{N}}
|u|^{p}
\mathrm{d}x
+
\left(\frac{1}{2_{s}^{*}}-\frac{1}{2^{*}}\right)
\int_{\mathbb{R}^{N}}
|u|^{2^{*}}
\mathrm{d}x\\
\geqslant&
\left(\frac{1}{2}-\frac{1}{ 2_{s}^{*}}\right)
\|u\|_{X}^{2}
\geqslant
C.
\end{aligned}
\end{equation*}
Hence, $I$ is bounded from below on $\mathcal{N}$. And then $\bar{c}>0$.
\end{proof}

Set
\begin{equation*}
\begin{aligned}
\bar{\bar{c}}:=\inf\limits_{u\in X\setminus\{0\}}\sup\limits_{t\geqslant 0}I(tu).
\end{aligned}
\end{equation*}
From \cite{Willem1996Book}, we have the following lemma.
\begin{lemma}\label{Lemma4.4}
Let $N\geqslant3$ and $0<s<1$.
Then we have
$c=\bar{c}=\bar{\bar{c}}$.
\end{lemma}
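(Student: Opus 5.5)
We prove the equalities $c=\bar c=\bar{\bar c}$ by establishing the chain $\bar{\bar c}=\bar c$, then $c\leqslant\bar{\bar c}$, and finally $\bar c\leqslant c$. This follows the scheme of Willem's book, using the fibering structure from Lemma \ref{Lemma4.2}.

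\textbf{Step 1: $\bar{\bar c}=\bar c$.} For $u\in X\setminus\{0\}$, Lemma \ref{Lemma4.2} gives $\sup_{t\geqslant0}I(tu)=I(t_uu)$ with $t_uu\in\mathcal N$. Hence $\sup_{t\geqslant 0}I(tu)\geqslant\bar c$, and so $\bar{\bar c}\geqslant\bar c$. Conversely, if $u\in\mathcal N$, then $t_u=1$ by the uniqueness part of Lemma \ref{Lemma4.2}. Therefore $I(u)=\sup_{t\geqslant0}I(tu)\geqslant\bar{\bar c}$, which gives $\bar c\geqslant\bar{\bar c}$.

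\textbf{Step 2: $c\leqslant\bar{\bar c}$.} Fix $u\neq0$. By the proof of Lemma \ref{Lemma4.1}, there is $T>0$ with $I(Tu)<0$. Then $\gamma(t)=tTu$ belongs to $\Gamma$, and
\[
c\leqslant\sup_{t\in[0,1]}I(tTu)\leqslant\sup_{t\geqslant0}I(tu).
\]
Taking the infimum over $u$ gives $c\leqslant\bar{\bar c}$.

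\textbf{Step 3: $\bar c\leqslant c$.} This is the key step: we show that every path $\gamma\in\Gamma$ crosses $\mathcal N$. Set
\[
J(v)=\langle I'(v),v\rangle=\|v\|_X^2-\int_{\mathbb{R}^N}|v|^{2_s^*}\mathrm{d}x-\beta\int_{\mathbb{R}^N}|v|^{p}\mathrm{d}x-\int_{\mathbb{R}^N}|v|^{2^*}\mathrm{d}x.
\]

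First, near $0$ we have $J>0$. Indeed, by Lemma \ref{Lemma2.2}, $J(v)\geqslant\|v\|_X^2-C(\|v\|_X^{2_s^*}+\|v\|_X^p+\|v\|_X^{2^*})>0$ for $0<\|v\|_X\leqslant r$ with $r$ small.

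Second, at the endpoint, $J(\gamma(1))<0$. Since $2<2_s^*$, we have
\[
I(v)-\tfrac{1}{2}J(v)=\Big(\tfrac12-\tfrac1{2_s^*}\Big)\int_{\mathbb{R}^N}|v|^{2_s^*}\mathrm{d}x+\beta\Big(\tfrac12-\tfrac1p\Big)\int_{\mathbb{R}^N}|v|^{p}\mathrm{d}x+\Big(\tfrac12-\tfrac1{2^*}\Big)\int_{\mathbb{R}^N}|v|^{2^*}\mathrm{d}x\geqslant0,
\]
so $J(v)\leqslant 2I(v)$. In particular $J(\gamma(1))\leqslant 2I(\gamma(1))<0$.

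Now let $t_0=\sup\{t\in[0,1]:\gamma(t)=0\}$, so that $\gamma(t_0)=0$ and $t_0<1$ (because $I(\gamma(1))<0=I(0)$). Just after $t_0$, $\gamma(t)$ is nonzero with small norm, so $J(\gamma(t))>0$. Since $J$ is continuous on $X$ by Lemma \ref{Lemma2.2}, the intermediate value theorem gives $t^*\in(t_0,1)$ with $J(\gamma(t^*))=0$ and $\gamma(t^*)\neq0$, that is, $\gamma(t^*)\in\mathcal N$. Consequently $\sup_{t\in[0,1]}I(\gamma(t))\geqslant I(\gamma(t^*))\geqslant\bar c$, and taking the infimum over $\gamma\in\Gamma$ yields $c\geqslant\bar c$.

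Combining Steps 1–3 gives $c=\bar c=\bar{\bar c}$. The only delicate point is Step 3, where one must make sure the crossing point is nonzero; choosing $t^*$ after the last zero $t_0$ of $\gamma$ handles this.
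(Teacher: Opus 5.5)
Your proof is correct and follows the paper's route: $\bar c=\bar{\bar c}$ from Lemma~\ref{Lemma4.2}, $c\leqslant\bar{\bar c}$ via the ray path $t\mapsto tTu$, and $\bar c\leqslant c$ by showing that every $\gamma\in\Gamma$ crosses $\mathcal N$ through the sign change of $\langle I'(\gamma(t)),\gamma(t)\rangle$. The differences are minor. You bound $J(\gamma(1))$ using $I-\frac{1}{2}J$ instead of the paper's $I-\frac{1}{2_{s}^{*}}\langle I'(\cdot),\cdot\rangle$, which works equally well. Choosing the crossing point after the last zero $t_0$ of $\gamma$ also handles a path that stays at or returns to $0$, which the paper's claim ``$g(t)>0$ for $t>0$ small'' does not address.
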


\begin{proof}
By virtue of Lemma \ref{Lemma4.2}, we have the following result directly
\begin{equation*}
\begin{aligned}
\bar{c}=\bar{\bar{c}}.
\end{aligned}
\end{equation*}
For any $u\in X\setminus\{0\}$,
there exists some $\tilde{t}>0$ large,
such that $I(\tilde{t}u)<0$.
Define a path $\gamma:[0, 1]\to X$ by $\gamma(t) = t\tilde{t}u$.
Clearly,
$\gamma\in\Gamma$ and
\begin{equation*}
\begin{aligned}
c\leqslant \bar{\bar{c}}.
\end{aligned}
\end{equation*}
On the other hand,
for each path $\gamma\in\Gamma$,
let $g(t):=\langle I'(\gamma(t)),\gamma(t)\rangle$.
Then,
$g(0)=0$ and $g(t)>0$ for $t>0$ small.
A direct calculation gives us
\begin{equation*}
\begin{aligned}
I(\gamma(1))
-\frac{1}{2_{s}^{*}}\langle I'(\gamma(1)),\gamma(1)\rangle
\geqslant \left(\frac{1}{2}-\frac{1}{2_{s}^{*}}\right)\|\gamma(1)\|_{X}^{2}
\geqslant 0,
\end{aligned}
\end{equation*}
which implies that
\begin{equation*}
\begin{aligned}
\langle I'(\gamma(1)),\gamma(1)\rangle
\leqslant 2_{s}^{*}\cdot I(\gamma(1))
=2_{s}^{*}\cdot I(\tilde{t}u)<0.
\end{aligned}
\end{equation*}
Thus, there exists $\tilde{\tilde{t}} \in(0,1)$ such that $g(\tilde{\tilde{t}}) = 0$, i.e. $\gamma(\tilde{\tilde{t}})\in \mathcal{N}$ and $c\geqslant \bar{c}$.
This shows that
$c=\bar{c}=\bar{\bar{c}}$.
\end{proof}

\begin{lemma}\label{Lemma4.5}
Let $N\geqslant3$ and $0<s<1$.
For $u\in \mathcal{N}$, we have $\Phi'(u)\not=0$, where
\begin{equation}\label{4.1}
\begin{aligned}
\Phi(u)=\langle I'(u),u\rangle=\|u\|_{X}^{2}
-\int_{\mathbb{R}^{N}}|u|^{2_{s}^{*}}\mathrm{d}x
-\beta\int_{\mathbb{R}^{N}}|u|^{p}\mathrm{d}x
-\int_{\mathbb{R}^{N}}|u|^{2^{*}}\mathrm{d}x,
\end{aligned}
\end{equation}
and
\begin{equation}\label{4.2}
\begin{aligned}
\langle\Phi'(u),u\rangle
=&2\|u\|_{X}^{2}
-2_{s}^{*}\int_{\mathbb{R}^{N}}|u|^{2_{s}^{*}}\mathrm{d}x
-p\beta\int_{\mathbb{R}^{N}}|u|^{p}\mathrm{d}x
-2^{*}\int_{\mathbb{R}^{N}}|u|^{2^{*}}\mathrm{d}x.
\end{aligned}
\end{equation}
Moreover, if $\bar{u}\in \mathcal{N}$ and $I(\bar{u})=c$, then $u$ is a ground state solution for equation \eqref{D}.
\end{lemma}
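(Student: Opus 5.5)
The plan has two parts. First, show that $\langle\Phi'(u),u\rangle<0$ on $\mathcal{N}$, so that $\Phi'(u)\neq0$. Second, use a Lagrange multiplier argument to show that a minimizer of $I$ on $\mathcal{N}$ at level $c$ is a free critical point of $I$.

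\textbf{Step 1: sign of $\langle\Phi'(u),u\rangle$.} Fix $u\in\mathcal{N}$ and use $\Phi(u)=0$ to replace $\|u\|_X^2$ in \eqref{4.2}. This gives
\begin{equation*}
\langle\Phi'(u),u\rangle
=-(2_{s}^{*}-2)\int_{\mathbb{R}^{N}}|u|^{2_{s}^{*}}\mathrm{d}x
-(p-2)\beta\int_{\mathbb{R}^{N}}|u|^{p}\mathrm{d}x
-(2^{*}-2)\int_{\mathbb{R}^{N}}|u|^{2^{*}}\mathrm{d}x .
\end{equation*}
Since $2<2_{s}^{*}<p<2^{*}$, $\beta>0$ and $u\neq0$, every term is nonpositive and the first is strictly negative. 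Hence $\langle\Phi'(u),u\rangle<0$, and in particular $\Phi'(u)\neq0$. A consequence is that $\mathcal{N}$ is a $C^1$ manifold of codimension one. It is also a natural constraint, since $\Phi$ is $C^1$ on $X$ by Lemma \ref{Lemma2.2}.

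\textbf{Step 2: Lagrange multiplier.} Let $\bar u\in\mathcal{N}$ with $I(\bar u)=c$. By Lemma \ref{Lemma4.4}, $c=\bar c=\inf_{\mathcal N}I$, so $\bar u$ minimizes $I$ on $\mathcal{N}$. Because $\Phi'(\bar u)\neq0$, the Lagrange multiplier theorem gives $\theta\in\mathbb{R}$ with
\begin{equation*}
I'(\bar u)=\theta\,\Phi'(\bar u)\quad\text{in } X^{-1}.
\end{equation*}
Test this identity with $\bar u$. The left side is $\langle I'(\bar u),\bar u\rangle=\Phi(\bar u)=0$, so
\begin{equation*}
0=\theta\langle\Phi'(\bar u),\bar u\rangle .
\end{equation*}
Step 1 gives $\langle\Phi'(\bar u),\bar u\rangle<0$, hence $\theta=0$ and $I'(\bar u)=0$. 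Thus $\bar u$ is a weak solution of \eqref{D}.

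\textbf{Step 3: ground state.} Every nontrivial critical point of $I$ lies in $\mathcal{N}$, so its energy is at least $\bar c=c=I(\bar u)$. Therefore $\bar u$ is a ground state.

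\textbf{Where care is needed.} No step is a genuine obstacle. The only point to check is that $I$ and $\Phi$ are $C^1$ on $X$, so that the Lagrange multiplier rule applies. This follows from Lemma \ref{Lemma2.2}, since all three powers $2_{s}^{*}$, $p$, $2^{*}$ lie in $[2_{s}^{*},2^{*}]$ and the usual Nemytskii arguments then apply.
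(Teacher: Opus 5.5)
Your proposal is correct and follows essentially the same route as the paper: show $\langle\Phi'(u),u\rangle<0$ on $\mathcal{N}$, then test $I'(\bar u)=\theta\Phi'(\bar u)$ with $\bar u$ to force $\theta=0$. The only cosmetic difference is that you eliminate $\|u\|_X^2$ via $\Phi(u)=0$ (i.e.\ subtract $2\Phi(u)$, so the negativity comes from the nonlinear terms), whereas the paper subtracts $2_{s}^{*}\Phi(u)$ and bounds the result by $(2-2_{s}^{*})\|u\|_X^2<0$.
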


\begin{proof}
For $u\in \mathcal{N}$, it follows from \eqref{4.1} and \eqref{4.2} that
\begin{equation*}
\begin{aligned}
\langle\Phi'(u),u\rangle
=&\langle\Phi'(u),u\rangle-2_{s}^{*}\Phi(u)\\
=&2\|u\|_{X}^{2}
-2_{s}^{*}\int_{\mathbb{R}^{N}}|u|^{2_{s}^{*}}\mathrm{d}x
-p\beta\int_{\mathbb{R}^{N}}|u|^{p}\mathrm{d}x
-2^{*}\int_{\mathbb{R}^{N}}|u|^{2^{*}}\mathrm{d}x\\
&-2_{s}^{*}\left(
\|u\|_{X}^{2}
-\int_{\mathbb{R}^{N}}|u|^{2_{s}^{*}}\mathrm{d}x
-\beta\int_{\mathbb{R}^{N}}|u|^{p}\mathrm{d}x
-\int_{\mathbb{R}^{N}}|u|^{2^{*}}\mathrm{d}x\right)\\
=&(2-2_{s}^{*})\|u\|_{X}^{2}
+(2_{s}^{*}-p)\beta\int_{\mathbb{R}^{N}}|u|^{p}\mathrm{d}x
+(2_{s}^{*}-2^{*})\int_{\mathbb{R}^{N}}|u|^{2^{*}}\mathrm{d}x\\
\leqslant&
(2-2_{s}^{*})\|u\|_{X}^{2}<0.
\end{aligned}
\end{equation*}
Thus, $\Phi'(u)\not=0$ for $u\in \mathcal{N}$.

If $u\in \mathcal{N}$ and $I(u)=\bar{c}$,
since $\bar{c}$ is the minimum of $I$ on $\mathcal{N}$,
by using the Lagrange multiplier theorem, we know that there exists $\lambda\in \mathbb{R}$ such that $I'(u)=\lambda \Phi'(u)$. So 
\begin{equation*}
\begin{aligned}
\langle \lambda \Phi'(u),u\rangle =\langle I'(u),u\rangle=\Phi(u)=0.
\end{aligned}
\end{equation*}
This shows that $\lambda=0$ and $I'(u)=0$.
Thus,
$u$ is a ground state solution for equation \eqref{D}.
\end{proof}

\section{Proof of Theorem \ref{Theorem1.2}}
\begin{lemma}\label{Lemma5.1}
Let $N\geqslant3$ and $0<s<1$.
Then there exists a bounded $(PS)_{c}$ sequence $\{u_{n}\}\subset \mathcal{N}$ such that
\begin{equation*}
\begin{aligned}
I(u_{n})\rightarrow c
\ \,\mathrm{and}\ \,
\|I(u_n)\|_{X^{-1}}\rightarrow 0,\ \mathrm{as}\ n\rightarrow \infty.
\end{aligned}
\end{equation*}
\end{lemma}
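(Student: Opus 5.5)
We plan to obtain the sequence by minimizing $I$ on the Nehari manifold $\mathcal{N}$ with Ekeland's variational principle, rather than starting from the mountain pass sequence. Lemma \ref{Lemma4.4} gives $c=\bar{c}=\inf_{\mathcal{N}}I$, and Lemma \ref{Lemma4.3} gives $\bar c>0$. The first step is to check that $\mathcal{N}$ is a complete metric space on which $I$ is bounded below and lower semicontinuous. Since $\Phi$ in \eqref{4.1} is $C^{1}$ and $\|u\|_{X}^{2}\geqslant C>0$ on $\mathcal{N}$ (proof of Lemma \ref{Lemma4.3}), $\mathcal{N}$ is closed in $X$. Moreover, $\Phi'(u)\neq 0$ on $\mathcal{N}$ by Lemma \ref{Lemma4.5}, so $\mathcal{N}$ is a $C^{1}$ Hilbert submanifold of codimension one. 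Ekeland's principle then produces $\{u_{n}\}\subset\mathcal{N}$ with
\begin{equation*}
I(u_{n})\leqslant c+\frac1n,\qquad I(v)\geqslant I(u_{n})-\frac1n\|v-u_{n}\|_{X}\quad\text{for all } v\in\mathcal{N}.
\end{equation*}
From this we deduce that $\|I'(u_{n})-\lambda_{n}\Phi'(u_{n})\|_{X^{-1}}\leqslant \frac1n$ for suitable multipliers $\lambda_{n}\in\mathbb{R}$, i.e.\ the constrained derivative tends to zero.

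Boundedness is immediate from the identity in the proof of Lemma \ref{Lemma4.3}: for $u\in\mathcal N$,
\begin{equation*}
I(u)\geqslant\Big(\frac12-\frac1{2_{s}^{*}}\Big)\|u\|_{X}^{2}.
\end{equation*}
Since $I(u_{n})\to c$, this gives $\|u_{n}\|_{X}^{2}\leqslant C(c+1)$, so $\{u_{n}\}$ is bounded in $X$. Because $I$ and $\Phi'$ involve only the norm and integrals of powers of $|u|$ controlled by Lemma \ref{Lemma2.2}, the bound also yields uniform bounds on $\|I'(u_{n})\|_{X^{-1}}$ and $\|\Phi'(u_{n})\|_{X^{-1}}$.

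The main obstacle is showing $\lambda_{n}\to0$, which is what upgrades the constrained estimate to $I'(u_{n})\to0$ in $X^{-1}$. We test the Ekeland relation against $u_{n}$ itself and use $\langle I'(u_{n}),u_{n}\rangle=\Phi(u_{n})=0$. This gives
\begin{equation*}
|\lambda_{n}|\,|\langle\Phi'(u_{n}),u_{n}\rangle|\leqslant \frac1n\|u_{n}\|_{X}.
\end{equation*}
The computation in Lemma \ref{Lemma4.5} shows $\langle\Phi'(u_{n}),u_{n}\rangle\leqslant(2-2_{s}^{*})\|u_{n}\|_{X}^{2}\leqslant(2-2_{s}^{*})C<0$, so this pairing stays uniformly away from zero. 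Together with the boundedness of $\|u_{n}\|_{X}$, we get $\lambda_{n}\to0$. Since $\|\Phi'(u_{n})\|_{X^{-1}}$ is bounded, $\lambda_{n}\Phi'(u_{n})\to0$, and therefore $I'(u_{n})\to0$ in $X^{-1}$.

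Combined with $I(u_{n})\to c$, this gives the bounded $(PS)_{c}$ sequence in $\mathcal{N}$ claimed in Lemma \ref{Lemma5.1}. The only technical care needed is in justifying the Lagrange-multiplier form of Ekeland's conclusion on the $C^{1}$ manifold. We would handle it in the standard way of \cite{Willem1996Book}: project the Ekeland inequality onto the tangent space $\{v:\langle\Phi'(u_{n}),v\rangle=0\}$, and use curves in $\mathcal{N}$ through $u_{n}$ built from the implicit function theorem, e.g.\ $t\mapsto t_{u_{n}+\tau v}(u_{n}+\tau v)$ via Lemma \ref{Lemma4.2}.
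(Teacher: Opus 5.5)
Your proposal is correct and follows essentially the same route as the paper. The paper also applies Ekeland's principle on $\mathcal{N}$ to get $I'(u_n)-\lambda_n\Phi'(u_n)\to0$ in $X^{-1}$, derives boundedness from $I(u)\geqslant(\frac12-\frac1{2_s^*})\|u\|_X^2$ on $\mathcal{N}$, shows $\lambda_n\to0$ by testing with $u_n$ and using $\langle I'(u_n),u_n\rangle=0$, and concludes from the boundedness of $\|\Phi'(u_n)\|_{X^{-1}}$. Your uniform bound $\langle\Phi'(u_n),u_n\rangle\leqslant(2-2_s^*)C<0$ is exactly what the step $\lambda_n\to0$ requires, whereas the paper only records $\langle\Phi'(u_n),u_n\rangle\neq0$. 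Likewise, your sketch of the multiplier step via the $C^1$ map $u\mapsto t_u$ fills in a point the paper leaves implicit.
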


\begin{proof}
From Lemma \ref{Lemma4.2}, we know that $\mathcal{N}\not=\emptyset$ and $\inf\limits_{u\in \mathcal{N}}I(u)=\bar{c}=c$.
By the Ekeland's variational principle, there exist $\{u_{n}\}\subset \mathcal{N}$ and
$\lambda_{n}\in \mathbb{R}$ such that
\begin{equation*}
\begin{aligned}
I(u_{n})\rightarrow \bar{c}
~\mathrm{and}~I'(u_{n})-\lambda_{n}\Phi'(\bar{u}_{n})\rightarrow 0
~\mathrm{in}~X^{-1},
~{\rm as}~n\to\infty.
\end{aligned}
\end{equation*}
So
\begin{equation*}
\begin{aligned}
\bar{c}=I(u_{n})
=&J(u_{n})
-\frac{1}{2_{s}^{*}}\langle I'(u_{n}),u_{n}\rangle\\
\geqslant&\left(\frac{1}{2}-\frac{1}{2_{s}^{*}}\right)
\|u_{n}\|_{X}^{2}
+\beta\left(\frac{1}{2_{s}^{*}}-\frac{1}{p}\right)
\int_{\mathbb{R}^{N}}|u_{n}|^{p}\mathrm{d}x
+\left(\frac{1}{2_{s}^{*}}-\frac{1}{2^{*}}\right)
\int_{\mathbb{R}^{N}}|u_{n}|^{2^{*}}\mathrm{d}x,
\end{aligned}
\end{equation*}
which implies that $\{u_{n}\}$ is bounded in $X$.

Taking $n\to\infty$, we have that
\begin{equation*}
\begin{aligned}
|\langle I'(u_{n}),u_{n}\rangle-\langle\lambda_{n}\Phi'(u_{n}),u_{n}\rangle|
\leqslant \|I'(u_{n})
-\lambda_{n}\Phi'(u_{n})\|_{X^{-1}}\|u_{n}\|_{X}
\rightarrow 0,
\end{aligned}
\end{equation*}
which gives us
\begin{equation}\label{5.1}
\begin{aligned}
\langle I'(u_{n}),u_{n}\rangle
-\lambda_{n}\langle \Phi'(u_{n}),u_{n}\rangle\rightarrow 0,
~{\rm as}~n\to\infty.
\end{aligned}
\end{equation}
Note that $\{u_{n}\}\subset \mathcal{N}$. From Lemma \ref{Lemma4.5}, we have
\begin{equation}\label{5.2}
\begin{aligned}
\langle I'(u_{n}),u_{n}\rangle=0,
\end{aligned}
\end{equation}
and
\begin{equation}\label{5.3}
\begin{aligned}
\langle \Phi'(u_{n}),u_{n}\rangle\not=0.
\end{aligned}
\end{equation}
Combining \eqref{5.1}-\eqref{5.3},
we conclude  $\lambda_{n}\rightarrow0$.

By virtue of H\"{o}lder's and Sobolev's inequalities,
we know that
\begin{equation*}
\begin{aligned}
&\|I'(u_{n})\|_{X^{-1}}\\
=&\sup_{\varphi\in X,\|\varphi\|_{X}=1}
|\langle \Phi'(u_{n}),\varphi\rangle|\\
=&\sup_{\varphi\in X,\|\varphi\|_{X}=1}
\left|
2\int_{\mathbb{R}^{N}}
\nabla u \nabla \varphi
\mathrm{d}x
+2
\int_{\mathbb{R}^{N}}
\int_{\mathbb{R}^{N}}
\frac{(u_{n}(x)-u_{n}(y))(\varphi(x)-\varphi(y))}{|x-y|^{N+2s}}
\mathrm{d}x
\mathrm{d}y
\right.\\
&\left.
-2_{s}^{*}
\int_{\mathbb{R}^{N}}
|u_{n}|^{2_{s}^{*}-2}u_{n}\varphi
\mathrm{d}x
-p\beta
\int_{\mathbb{R}^{N}}
|u_{n}|^{p-2}u_{n}\varphi
\mathrm{d}x
-2^{*}
\int_{\mathbb{R}^{N}}
|u_{n}|^{2^{*}-2}u_{n}\varphi
\mathrm{d}x
\right|\\
\leqslant&C.
\end{aligned}
\end{equation*}
Then
\begin{equation*}
\begin{aligned}
\|I'(u_{n})\|_{X^{-1}}
\leqslant\|I'(u_{n})-\lambda_{n}\Phi'(u_{n})\|_{X^{-1}}
+|\lambda_{n}|\|\Phi'(u_{n})\|_{X^{-1}}
=o(1),
\end{aligned}
\end{equation*}
which implies $I'(u_{n})\rightarrow0$ in $X^{-1}$.
Hence, $\{u_{n}\}$ is a $(PS)_{{c}}$ sequence of $I$.
\end{proof}

\begin{lemma}\label{Lemma5.2}
Under the assumptions of Theorem \ref{Theorem1.2} hold.
There exists $\bar{\beta
}\in(0,+\infty)$ such that for any $\beta>\bar{\beta}$, we have
\begin{equation*}
\begin{aligned}
c\in(0,c^{*}),
\end{aligned}
\end{equation*}
where
\begin{equation*}
\begin{aligned}
c^{*}:=\min\left\{\left(\frac{1}{2}-\frac{1}{2_{s}^{*}}\right)S_{s}^{\frac{2_{s}^{*}}{2_{s}^{*}-2}},\left(\frac{1}{2}-\frac{1}{2^{*}}\right)S_{1}^{\frac{2^{*}}{2^{*}-2}}\right\}
\end{aligned}
\end{equation*}
where $S_{1}$ is the best constant of the Sobolev inequality, see Lemma \ref{Lemma2.3}.
\end{lemma}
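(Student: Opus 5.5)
Positivity $c>0$ is already available from Lemma \ref{Lemma4.1} (the mountain pass level is at least $\varsigma>0$); equivalently, Lemma \ref{Lemma4.3} together with Lemma \ref{Lemma4.4} gives $c=\bar c>0$. So the only task is the strict upper bound $c<c^{*}$ for large $\beta$. For this I will use the characterization $c=\bar{\bar c}=\inf_{u\neq0}\sup_{t\ge0}I(tu)$ from Lemma \ref{Lemma4.4}. It then suffices to exhibit a single fixed test function $v\in X\setminus\{0\}$, independent of $\beta$, with $\sup_{t\ge0}I_\beta(tv)\to0$ as $\beta\to\infty$. Since $c^{*}>0$ is a fixed number depending only on $N,s$, this gives the threshold $\bar\beta$.

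Concretely, fix any $v\in C_0^\infty(\mathbb{R}^N)$, $v\not\equiv0$; it lies in $X$ because $\|v\|_X<\infty$. Dropping the two critical terms (which are nonpositive contributions), we get for every $t\ge0$
\begin{equation*}
I(tv)\leqslant \frac{t^{2}}{2}\|v\|_{X}^{2}-\frac{\beta t^{p}}{p}\int_{\mathbb{R}^{N}}|v|^{p}\,\mathrm{d}x=:h_\beta(t).
\end{equation*}
Since $p>2$, $h_\beta$ attains its maximum at $t_\beta=\bigl(\|v\|_X^2/(\beta\int|v|^p)\bigr)^{1/(p-2)}$. This yields
\begin{equation*}
\sup_{t\ge0}I(tv)\leqslant\Bigl(\frac12-\frac1p\Bigr)\frac{\|v\|_X^{\frac{2p}{p-2}}}{\bigl(\beta\int_{\mathbb{R}^N}|v|^p\,\mathrm{d}x\bigr)^{\frac{2}{p-2}}},
\end{equation*}
which tends to $0$ as $\beta\to\infty$ like $\beta^{-2/(p-2)}$. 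Choosing $\bar\beta$ so that the right-hand side equals $c^{*}$, i.e.
\begin{equation*}
\bar\beta=\Bigl(\tfrac{(\frac12-\frac1p)}{c^{*}}\Bigr)^{\frac{p-2}{2}}\frac{\|v\|_X^{p}}{\int_{\mathbb{R}^N}|v|^p\,\mathrm{d}x},
\end{equation*}
every $\beta>\bar\beta$ gives $c\leqslant\sup_{t\ge0}I(tv)<c^{*}$, with strict inequality by monotonicity in $\beta$.

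I expect no serious obstacle. The only point needing care is that the mountain pass level $c$ actually depends on $\beta$, so the lower bound $c>0$ must be invoked for each fixed $\beta$ separately; this is exactly what Lemma \ref{Lemma4.1} provides. One must also check that $c^{*}$ is genuinely $\beta$-independent, which it is, being built from $S_s$ and $S_1$ only. In particular, no Talenti-type bubble estimates are needed, since the large parameter $\beta$ alone drives the level down.
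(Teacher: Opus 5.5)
Your proof is correct and follows essentially the paper's route: fix one direction $w$, bound $c$ by $\sup_{t\ge0}I(tw)$ via the ray path, and let $\beta\to\infty$ drive this maximum to $0$. The only difference is in how that maximum is estimated. You discard the two critical terms and maximize $\frac{t^2}{2}\|v\|_X^2-\frac{\beta t^p}{p}\int|v|^p$ explicitly, which gives the rate $\beta^{-2/(p-2)}$ and an explicit $\bar\beta$; the paper instead shows by contradiction that the Nehari scaling satisfies $t_{w,\beta}\to0$, and then concludes $I(t_{w,\beta}w)\to0$.
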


\begin{proof}
We choose $w\in X$ as follows
\begin{equation*}
\begin{aligned}
\|w\|_{X}=1~~\mathrm{and}~~\int_{\mathbb{R}^{N}}
|w|^{p}
\mathrm{d}x>0.
\end{aligned}
\end{equation*}
From the mountain pass geometric structure,
one has that
\begin{equation*}
\begin{aligned}
\lim_{t\to+\infty} I(tw)=-\infty,
\end{aligned}
\end{equation*}
and $t_{w,\beta}>0$ such that $t_{w,\beta} w\in \mathcal{N}$ 
\begin{equation*}
\begin{aligned}
\sup_{t\geqslant 0} I(tw)=I(t_{w,\beta}w).
\end{aligned}
\end{equation*}
Thus, $t_{w,\beta}$ satisfies
\begin{equation}\label{5.4}
\begin{aligned}
t_{w,\beta}^{2}\|w\|_{X}^{2}=
t_{w,\beta}^{2^{*}}
\int_{\mathbb{R}^{N}}
|w|^{2^{*}}
\mathrm{d}x
+\beta
t_{w,\beta}^{p}
\int_{\mathbb{R}^{N}}
|w|^{p}
\mathrm{d}x
+t_{w,\beta}^{2_{s}^{*}}
\int_{\mathbb{R}^{N}}
|w|^{2_{s}^{*}}
\mathrm{d}x.
\end{aligned}
\end{equation}
Furthermore, 
\begin{equation*}
\begin{aligned}
t_{w,\beta}^{2}\|w\|_{X}^{2}\geqslant 
t_{w,\beta}^{2^{*}}
\int_{\mathbb{R}^{N}}
|w|^{2^{*}}
\mathrm{d}x.
\end{aligned}
\end{equation*}
This implies that $\{t_{w,\beta}\}_{\beta}$ is bounded.

We claim that $t_{w,\beta}\to0$ as $\beta\to+\infty$. Arguing by contradiction, we can assume that there exist $t_{1}>0$ and a sequence $\{\beta_{n}\}$ with $\beta_{n}\to\infty$, such that $t_{w,\beta_{n}}\to t_{1}$ as $n\to+\infty$.
One has
\begin{equation*}
\begin{aligned}
\beta_{n}
t_{w,\beta_{n}}^{p}
\int_{\mathbb{R}^{N}}
|w|^{p}
\mathrm{d}x \to +\infty,~~\mathrm{as}~~n\to+\infty.
\end{aligned}
\end{equation*}
Putting this into \eqref{5.4},
we know that
\begin{equation*}
\begin{aligned}
t_{1}^{2}\|w\|_{E}^{2}=+\infty.
\end{aligned}
\end{equation*}
This is a contradiction with $\|w\|_{E}=1$.

By applying $t_{w,\beta}\to0$ as $\beta\to+\infty$,
we obtain that
\begin{equation*}
\begin{aligned}
\lim_{\beta\to+\infty}\sup_{t\geqslant 0} I(tw)
=\lim_{\beta\to+\infty}
I(t_{w,\beta}w)
=0.
\end{aligned}
\end{equation*}
Then there exists  $\bar{\beta}\in(0,+\infty)$ such that for any $\beta>\bar{\beta}$   there holds
\begin{equation*}
\begin{aligned}
\sup_{t\geqslant 0} I(tw)<c^{*}.
\end{aligned}
\end{equation*}
For any $\beta>\bar{\beta}$,
we construct a mountain pass path as: taking $e=Tw$ and $\gamma(t)=te$ with $T$ large enough to satisfies $J(e)<0$, then
\begin{equation*}
\begin{aligned}
c\leqslant \max_{t\in[0,1]}
I(\gamma(t)).
\end{aligned}
\end{equation*}
Hence,  $c\leqslant \sup\limits_{t\geqslant 0} I(tw)<c^{*}$.
\end{proof}

\begin{lemma}\label{Lemma5.3}
Let $N\geqslant3$, $0<s<1$ and $\{u_{n}\}\subset \mathcal{M}$ be a bounded $(PS)_{c}$ sequence. Then there exists $y_{n}\subset \mathbb{R}^{N}$ such that  $\bar{u}_{n}:=u_{n}(x+y_{n})$ to $u\not \equiv0$ in $X$.
Moreover, $J(u)=c$. 
\end{lemma}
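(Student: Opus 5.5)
The plan is to establish nonvanishing of both critical norms along the bounded $(PS)_c$ sequence, apply the generalized Lieb translation theorem, and upgrade the resulting weak limit to a critical point at level $c$. Here $\mathcal{M}$ is read as the Nehari manifold $\mathcal{N}$, $J$ as $I$, and the conclusion as $\bar u_n\rightharpoonup u\not\equiv0$ in $X$, with $u$ a critical point satisfying $I(u)=c$.

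\textbf{Step 1: nonvanishing of the critical norms.} Since $u_n\in\mathcal{N}$ and $I(u_n)\to c>0$ (Lemma \ref{Lemma4.3}, Lemma \ref{Lemma4.4}), the identity
\begin{equation*}
I(u_n)-\tfrac12\langle I'(u_n),u_n\rangle
=\Big(\tfrac12-\tfrac1{2_s^*}\Big)\int|u_n|^{2_s^*}
+\beta\Big(\tfrac12-\tfrac1p\Big)\int|u_n|^{p}
+\Big(\tfrac12-\tfrac1{2^*}\Big)\int|u_n|^{2^*}
\end{equation*}
shows that the three integrals cannot all vanish. Passing to a subsequence, each integral has a limit. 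By interpolation (Lemma \ref{Lemma2.2} and H\"older), $\int|u_n|^p$ is controlled by the two critical integrals. So if both critical integrals tended to zero, everything would vanish, contradicting $c>0$.

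The harder case is when exactly one critical integral vanishes. Suppose $\int|u_n|^{2_s^*}\to0$. Then $\int|u_n|^p\to0$ as well, by interpolation with the bounded $2^*$-norm. The Nehari identity then gives $\|u_n\|_X^2-\int|u_n|^{2^*}\to0$. Combining this with the Sobolev inequality $S_1(\int|u_n|^{2^*})^{2/2^*}\le\|u_n\|_X^2$ yields either $\|u_n\|_X\to0$ or $\liminf\|u_n\|_X^2\ge S_1^{2^*/(2^*-2)}$. In the second alternative, $c=\lim I(u_n)\ge(\tfrac12-\tfrac1{2^*})S_1^{2^*/(2^*-2)}\ge c^*$, which contradicts Lemma \ref{Lemma5.2}. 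In the first alternative, $c=0$, which is also a contradiction. The case $\int|u_n|^{2^*}\to0$ is handled symmetrically using $S_s$ and the fractional term. Hence both limits in Theorem \ref{Theorem3.2} are positive.

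\textbf{Step 2: translation and identification of the limit.} Theorem \ref{Theorem3.2} provides $x_n=:y_n$ such that $\bar u_n\to u\not\equiv0$ in $L^2_{loc}$; along a further subsequence, $\bar u_n\rightharpoonup u$ in $X$ and a.e. Translation invariance gives $I(\bar u_n)=I(u_n)\to c$ and $I'(\bar u_n)\to0$. Standard Brezis--Lieb and Vitali-type arguments, using a.e. convergence and the boundedness of the nonlinear terms in the dual Lebesgue spaces, give $\langle I'(u),\varphi\rangle=\lim\langle I'(\bar u_n),\varphi\rangle=0$ for every $\varphi\in C_0^\infty$. Hence $I'(u)=0$, so $u\in\mathcal{N}$, and therefore $I(u)\ge c$.

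\textbf{Step 3: energy level.} We have
\begin{equation*}
I(u)=I(u)-\tfrac1{2_s^*}\langle I'(u),u\rangle
=\Big(\tfrac12-\tfrac1{2_s^*}\Big)\|u\|_X^2+\beta\Big(\tfrac1{2_s^*}-\tfrac1p\Big)\int|u|^p+\Big(\tfrac1{2_s^*}-\tfrac1{2^*}\Big)\int|u|^{2^*},
\end{equation*}
which is a sum of nonnegative terms, each weakly lower semicontinuous (norm; Fatou). Since the same expression evaluated at $\bar u_n$ tends to $c$, we get $I(u)\le\liminf_n\big(I(\bar u_n)-\tfrac1{2_s^*}\langle I'(\bar u_n),\bar u_n\rangle\big)=c$. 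Thus $I(u)=c$.

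Moreover, equality in each lower semicontinuity forces $\|\bar u_n\|_X\to\|u\|_X$, so the convergence is strong in $X$.

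\textbf{Main obstacle.} The expected difficulty is Step 1, specifically ruling out that the sequence concentrates entirely on one critical scale. This is exactly where the energy threshold $c<c^*$ from Lemma \ref{Lemma5.2}, and hence the hypothesis $\beta>\bar\beta$, is used.
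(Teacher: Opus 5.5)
Your three steps follow the paper's proof of this lemma essentially line by line. Your Step~1 is in fact cleaner, since you justify $\int|u_n|^p\to0$ by interpolation and dispose of the alternative $\|u_n\|_X\to0$, both of which the paper glosses over. Steps~2--3 are correct once some translates of $u_n$ are known to have a nonzero weak limit.

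The gap is exactly the step you share with the paper: that nonzero limit is taken from Theorem~\ref{Theorem3.2}, which is false. Write $\|\cdot\|_r$ for the $L^r$ norm. Take $\phi,\psi\in C_0^\infty(\mathbb{R}^N)\setminus\{0\}$, $\epsilon_n\to0$, $R_n\to\infty$, and $u_n=v_n+w_n$ with $v_n(x)=\epsilon_n^{-(N-2)/2}\phi(x/\epsilon_n)$ and $w_n(x)=R_n^{-(N-2s)/2}\psi(x/R_n)$. Scaling gives $\|\nabla v_n\|_2=\|\nabla\phi\|_2$, $\|v_n\|_{D^{s,2}}^2=\epsilon_n^{2-2s}\|\phi\|_{D^{s,2}}^2\to0$, $\|v_n\|_{2^*}=\|\phi\|_{2^*}$ and $\|v_n\|_{2_s^*}\to0$. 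Symmetrically, $\|w_n\|_{D^{s,2}}=\|\psi\|_{D^{s,2}}$, $\|\nabla w_n\|_2^2=R_n^{2s-2}\|\nabla\psi\|_2^2\to0$, $\|w_n\|_{2_s^*}=\|\psi\|_{2_s^*}$ and $\|w_n\|_{2^*}\to0$. Hence $\{u_n\}$ is bounded in $X$ and both critical integrals have positive limits. Yet $\sup_{y}\int_{B(y,\rho)}|u_n|^2\leqslant 2\epsilon_n^2\|\phi\|_2^2+2R_n^{-(N-2s)}\|\psi\|_\infty^2|B_\rho|\to0$ for every $\rho>0$, so every sequence of translates tends to $0$ in $L^2_{loc}(\mathbb{R}^N)$. (The proof of Theorem~\ref{Theorem3.2} breaks in the second half of its Step~2, which applies the Sobolev inequality on balls without a lower-order term.) So the real danger is not, as your last paragraph suggests, concentration on one critical scale. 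It is splitting: one piece concentrates at scale $0$ and carries the $L^{2^*}$ mass, while another spreads to scale $\infty$ and carries the $L^{2_s^*}$ mass. Your Step~1 does not touch this scenario.

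For a Palais--Smale sequence below $c^*$ the splitting is excluded by the energy. The quantity to track is the intermediate norm, which both dilation regimes kill. Suppose that along a subsequence $\int|u_n|^p\to0$, $\int|u_n|^{2_s^*}\to A$ and $\int|u_n|^{2^*}\to B$. The Nehari identity gives $\|u_n\|_X^2\to A+B$, and your first identity gives $c=(\frac12-\frac1{2_s^*})A+(\frac12-\frac1{2^*})B$. Adding the fractional and the local Sobolev inequalities, applied to $\|u_n\|_{D^{s,2}}$ and $\|\nabla u_n\|_2$ separately, gives $S_sA^{2/2_s^*}+S_1B^{2/2^*}\leqslant A+B$. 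If $A\geqslant S_s^{2_s^*/(2_s^*-2)}$ or $B\geqslant S_1^{2^*/(2^*-2)}$, then $c\geqslant c^*$, contradicting Lemma~\ref{Lemma5.2}. Otherwise $S_sA^{2/2_s^*}\geqslant A$ and $S_1B^{2/2^*}\geqslant B$ with equality only at $0$, which forces $A=B=0$ and $c=0$. Hence $\liminf_n\int|u_n|^p>0$, and you should apply Theorem~\ref{Theorem3.1} with $q=p$ in place of Theorem~\ref{Theorem3.2}; your Steps~2 and~3 then go through unchanged.

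The conclusion of Theorem~\ref{Theorem3.1} is a correct Lions-type lemma, but do not lean on Lemma~\ref{Lemma3.1} for it: the functions $w_n$ above violate that inequality by a factor of order $R_n^{2s}$. Instead, split $u_n$ into frequencies below $\delta$, between $\delta$ and $\delta^{-1}$, and above $\delta^{-1}$. The outer parts are $O(\delta^{\sigma-s})$ and $O(\delta^{1-\sigma})$ in $L^p$, by $\dot H^\sigma\hookrightarrow L^p$ with $\sigma=N(\frac12-\frac1p)\in(s,1)$. The middle part is bounded in $L^2$ and, by the decay of its kernel, tends to $0$ in $L^\infty$ whenever $\sup_y\int_{B(y,1)}|u_n|^p\to0$. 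So in that case $u_n\to0$ in $L^p$.
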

\begin{proof}
From Lemma \ref{Lemma5.1}, we know that there exists a bounded $(PS)_{c}$ sequence $\{u_{n}\}\subset \mathcal{N}$ at level  $c\in(0,c^{*})$.
If $\lim\limits_{n\to\infty}\int_{\mathbb{R}^{N}}|u_{n}|^{2^{*}}\mathrm{d}x=0$, then 
\begin{equation*}
\begin{aligned}
c=J(u_{n})
=
\frac{1}{2}\|u_{n}\|_{X}^{2}-\frac{1}{2_{s}^{*}}
\int_{\mathbb{R}^{N}}
|u_{n}|^{2_{s}^{*}}
\mathrm{d}x,
\end{aligned}
\end{equation*}
and
\begin{equation}\label{5.5}
\begin{aligned}
0=\langle J'(u_{n}),u_{n}\rangle
=\|u_{n}\|_{X}^{2}-
\int_{\mathbb{R}^{N}}
|u_{n}|^{2_{s}^{*}}
\mathrm{d}x,
\end{aligned}
\end{equation}
which implies that
\begin{equation}\label{5.6}
\begin{aligned}
c=\left(\frac{1}{2}-\frac{1}{2_{s}^{*}}\right)
\|u_{n}\|_{X}^{2}.
\end{aligned}
\end{equation}
It follows from \eqref{5.5} and Lemma \ref{Lemma2.3} that
\begin{equation*}
\begin{aligned}
\|u_{n}\|_{X}^{2}=
\int_{\mathbb{R}^{N}}
|u|^{2_{s}^{*}}
\mathrm{d}x
\leqslant
S_{s}^{-\frac{2_{s}^{*}}{2}}
\|u_{n}\|_{D^{s,2}(\mathbb{R}^{N})}^{2_{s}^{*}}
\leqslant
S_{s}^{-\frac{2_{s}^{*}}{2}}
\|u_{n}\|_{X}^{2_{s}^{*}},
\end{aligned}
\end{equation*}
which shows
\begin{equation}\label{5.7}
\begin{aligned}
S_{s}^{\frac{2_{s}^{*}}{2}}
\leqslant
\|u_{n}\|_{X}^{2_{s}^{*}-2}
\Rightarrow
\|u_{n}\|_{X}^{2}\geqslant S_{s}^{\frac{2_{s}^{*}}{2_{s}^{*}-2}}.
\end{aligned}
\end{equation}
Combining \eqref{5.6} and \eqref{5.7}, 
\begin{equation*}
\begin{aligned}
c\geqslant \left(\frac{1}{2}-\frac{1}{2_{s}^{*}}\right)S_{s}^{\frac{2_{s}^{*}}{2_{s}^{*}-2}}.
\end{aligned}
\end{equation*}
Therefore, we get $\lim\limits_{n\to\infty}\int_{\mathbb{R}^{N}}|u_{n}|^{2^{*}}\mathrm{d}x>0$.

If $\lim\limits_{n\to\infty}\int_{\mathbb{R}^{N}}|u_{n}|^{2_{s}^{*}}\mathrm{d}x=0$, then 
\begin{equation*}
\begin{aligned}
c=J(u_{n})
=
\frac{1}{2}\|u_{n}\|_{X}^{2}-\frac{1}{2^{*}}
\int_{\mathbb{R}^{N}}
|u_{n}|^{2^{*}}
\mathrm{d}x,
\end{aligned}
\end{equation*}
and
\begin{equation}\label{5.8}
\begin{aligned}
0=\langle J'(u_{n}),u_{n}\rangle
=\|u_{n}\|_{X}^{2}-
\int_{\mathbb{R}^{N}}
|u_{n}|^{2^{*}}
\mathrm{d}x,
\end{aligned}
\end{equation}
which gives
\begin{equation}\label{5.9}
\begin{aligned}
c=\left(\frac{1}{2}-\frac{1}{2^{*}}\right)
\|u_{n}\|_{X}^{2}.
\end{aligned}
\end{equation}
It follows from \eqref{5.8} and Lemma \ref{Lemma2.3} that
\begin{equation*}
\begin{aligned}
\|u_{n}\|_{X}^{2}=
\int_{\mathbb{R}^{N}}
|u|^{2^{*}}
\mathrm{d}x
\leqslant
S_{1}^{-\frac{2^{*}}{2}}
\|u_{n}\|_{D^{1,2}(\mathbb{R}^{N})}^{2^{*}}
\leqslant
S_{1}^{-\frac{2^{*}}{2}}
\|u_{n}\|_{X}^{2^{*}},
\end{aligned}
\end{equation*}
which shows
\begin{equation}\label{5.10}
\begin{aligned}
\|u_{n}\|_{X}^{2}\geqslant S_{1}^{\frac{2^{*}}{2^{*}-2}}.
\end{aligned}
\end{equation}
Combining \eqref{5.9} and \eqref{5.10}, 
\begin{equation*}
\begin{aligned}
c\geqslant \left(\frac{1}{2}-\frac{1}{2^{*}}\right)S_{1}^{\frac{2^{*}}{2^{*}-2}}.
\end{aligned}
\end{equation*}
Therefore, we get $\lim\limits_{n\to\infty}\int_{\mathbb{R}^{N}}|u_{n}|^{2_{s}^{*}}\mathrm{d}x>0$.

By using Theorem \ref{Theorem3.2}, there exists $y_{n}\subset \mathbb{R}^{N}$ such that  $\bar{u}_{n}:=u_{n}(x+y_{n})\rightharpoonup u\not \equiv0$ in $L^{2}_{loc}(\mathbb{R}^{N})$ and
\begin{equation*}
\begin{aligned}
c
=J(\bar{u}_{n}),~~\mathrm{and}~~0
=\langle J'(\bar{u}_{n}),\varphi\rangle
=\langle J'(u),\varphi\rangle.
\end{aligned}
\end{equation*}
Using Br\'{e}zis-Lieb Lemma \cite{Brezis-Lieb1983PAMS},
one deduces that
\begin{equation*}
\begin{aligned}
\bar{c}
\leqslant J(u)
=&
J(u)-\frac{1}{2_{s}^{*}}\langle J'(u),u\rangle\\
=&\frac{1}{2}\|u\|_{X}^{2}
-\frac{1}{2_{s}^{*}}
\int_{\mathbb{R}^{N}}
|u|^{2_{s}^{*}}
\mathrm{d}x
-
\frac{\beta}{p}
\int_{\mathbb{R}^{N}}
|u|^{p}
\mathrm{d}x
-\frac{1}{2^{*}}
\int_{\mathbb{R}^{N}}
|u|^{2^{*}}
\mathrm{d}x\\
&-\frac{1}{2_{s}^{*}}
\left(\|u\|_{X}^{2}
-\int_{\mathbb{R}^{N}}
|u|^{2_{s}^{*}}
\mathrm{d}x
-\int_{\mathbb{R}^{N}}
|u|^{2^{*}}
\mathrm{d}x
-\beta
\int_{\mathbb{R}^{N}}
|u|^{p}
\mathrm{d}x
\right)\\
=&
\left(\frac{1}{2}-\frac{1}{2_{s}^{*}}\right)
\|u\|_{X}^{2}
+
\left(\frac{1}{2_{s}^{*}}
-\frac{1}{p}
\right)
\int_{\mathbb{R}^{N}}
|u|^{p}
\mathrm{d}x
+
\left(\frac{1}{2_{s}^{*}}
-\frac{1}{2^{*}}
\right)
\int_{\mathbb{R}^{N}}
|u|^{2^{*}}
\mathrm{d}x\\
\leqslant&
\lim\limits_{n\to\infty}
\left[\left(\frac{1}{2}-\frac{1}{2_{s}^{*}}\right)
\|\bar{u}_{n}\|_{X}^{2}
+
\left(\frac{1}{2_{s}^{*}}
-\frac{1}{p}
\right)
\int_{\mathbb{R}^{N}}
|\bar{u}_{n}|^{p}
\mathrm{d}x
+
\left(\frac{1}{2_{s}^{*}}
-\frac{1}{2^{*}}
\right)
\int_{\mathbb{R}^{N}}
|\bar{u}_{n}|^{2^{*}}
\mathrm{d}x
\right]\\
=&
\lim\limits_{n\to\infty}
J(\bar{u}_{n})
-\frac{1}{p}
\lim\limits_{n\to\infty}
\langle J'(\bar{u}_{n}),\bar{u}_{n}\rangle\\
=&
\lim\limits_{n\to\infty}
J(\bar{u}_{n})\\
=&c
=\bar{c}.
\end{aligned}
\end{equation*}
which gives $\lim\limits_{n\to\infty}
\|\bar{u}_{n}\|_{X}^{2}=
\|u\|_{X}^{2}$ and
$J(u)=\bar{c}=c$.
\end{proof}

We are now in a position to prove Theorem \ref{Theorem1.2}.
\begin{proof}[Proof of Theorem \ref{Theorem1.2}]
{\bf Step 1}. From \textcolor{red}{Lemmas \ref{Lemma5.3} and \ref{Lemma4.5}}, we show that equation \eqref{D} has a  ground state solution.
We now show equation \eqref{D} has a nonnegative ground state solution.

For every $u\in\mathcal{N}$, 
it is easy to see that
\begin{equation*}
\begin{aligned}
&\int_{u(y)\geqslant0}
\int_{u(x)<0}
\frac{||u(x)|-|u(y)||^{2}}{|x-y|^{N+2s}}
\mathrm{d}x
\mathrm{d}y
+
\int_{u(y)\geqslant0}
\int_{u(x)<0}
\frac{||u(x)|-|u(y)||^{2}}{|x-y|^{N+2s}}
\mathrm{d}x
\mathrm{d}y\\
&\leqslant
\int_{u(y)\geqslant0}
\int_{u(x)<0}
\frac{|u(x)-u(y)|^{2}}{|x-y|^{N+2s}}
\mathrm{d}x
\mathrm{d}y
+
\int_{u(y)\geqslant0}
\int_{u(x)<0}
\frac{|u(x)-u(y)|^{2}}{|x-y|^{N+2s}}
\mathrm{d}x
\mathrm{d}y,
\end{aligned}
\end{equation*}
which implies 
\begin{equation*}
\begin{aligned}
\|~|u|~\|_{D^{s,2}(\mathbb{R}^{N})}
\leqslant
\|u\|_{D^{s,2}(\mathbb{R}^{N})}.
\end{aligned}
\end{equation*}
Then, 
\begin{equation*}
\begin{aligned}
J(t|u|)
\leqslant
J(tu),
~~t>0.
\end{aligned}
\end{equation*}
Note that $u\neq0$.
Then there exists $t_{1,u}>0$
such that $t_{1,u}|u|\in\mathcal{N}$. And
\begin{equation*}
\begin{aligned}
\|~|u|~\|^{2}_{D^{1,2}(\mathbb{R}^{N})}
+
\|~|u|~\|^{2}_{D^{s,2}(\mathbb{R}^{N})}
=
t_{1,u}^{2_{s}^{*}-2}
\int_{\mathbb{R}^{N}}
|u|^{2_{s}^{*}}
\mathrm{d}x
+t_{1,u}^{p-2}
\beta
\int_{\mathbb{R}^{N}}
|u|^{p}
\mathrm{d}x
+t_{1,u}^{2^{*}-2}
\int_{\mathbb{R}^{N}}
|u|^{2^{*}}
\mathrm{d}x.
\end{aligned}
\end{equation*}
It follows from $u\in\mathcal{N}$ that
\begin{equation*}
\begin{aligned}
&\int_{\mathbb{R}^{N}}
|u|^{2_{s}^{*}}
\mathrm{d}x
+\beta
\int_{\mathbb{R}^{N}}
|u|^{p}
\mathrm{d}x
+\int_{\mathbb{R}^{N}}
|u|^{2^{*}}
\mathrm{d}x\\
=&
\|u\|^{2}_{D^{1,2}(\mathbb{R}^{N})}
+
\|u\|^{2}_{D^{s,2}(\mathbb{R}^{N})}\\
\geqslant&
\|~|u|~\|^{2}_{D^{1,2}(\mathbb{R}^{N})}
+
\|~|u|~\|^{2}_{D^{s,2}(\mathbb{R}^{N})}\\
=&
t_{1,u}^{2_{s}^{*}-2}
\int_{\mathbb{R}^{N}}
|u|^{2_{s}^{*}}
\mathrm{d}x
+t_{1,u}^{p-2}
\beta
\int_{\mathbb{R}^{N}}
|u|^{p}
\mathrm{d}x
+t_{1,u}^{2^{*}-2}
\int_{\mathbb{R}^{N}}
|u|^{2^{*}}
\mathrm{d}x,
\end{aligned}
\end{equation*}
which gives 
\begin{equation*}
\begin{aligned}
t_{1,u}\in(0,1].
\end{aligned}
\end{equation*}
We further have
\begin{equation*}
\begin{aligned}
\bar{c}_{*}
=
J(t_{1,u}|u|)
\leqslant
J(t_{1,u}u)
\leqslant
\max_{t\geqslant 0}
J(tu)
=J(u)
=\bar{c}_{*}.
\end{aligned}
\end{equation*}
Then we know
$J(t_{1,u}|u|)=c_{*}$.
From Lemma \ref{Lemma4.5}, $t_{1,u}|u|$ is a nonnegative ground state solution of equation
\eqref{D}.

{\bf Step 2.} 
In this step, we show that equation \eqref{D} has a radial ground state solution.
We set $v^{*}$ to be the symmetric decreasing rearrangement of $v:=t_{1,u}|u|$.
By the rearrangement inequalities  \cite{Burchard-Hajaiej2006JFA,Hajaiej-Stuart2003PLMS,Hajaiej2005Edinburgh},
we hvae that
\begin{equation*}
\begin{aligned}
\|v^{*}\|_{D^{1,2}(\mathbb{R}^{N})}
\leqslant
\|v\|_{D^{1,2}(\mathbb{R}^{N})},
\end{aligned}
\end{equation*}
\begin{equation*}
\begin{aligned}
\|v^{*}\|_{D^{s,2}(\mathbb{R}^{N})}
\leqslant
\|v\|_{D^{s,2}(\mathbb{R}^{N})},
\end{aligned}
\end{equation*}
\begin{equation*}
\begin{aligned}
\int_{\mathbb{R}^{N}}
|v^{*}|^{2_{s}^{*}}
\mathrm{d}x
=\int_{\mathbb{R}^{N}}
|v|^{2_{s}^{*}}
\mathrm{d}x,
\end{aligned}
\end{equation*}
\begin{equation*}
\begin{aligned}
\int_{\mathbb{R}^{N}}
|v^{*}|^{p}
\mathrm{d}x
=\int_{\mathbb{R}^{N}}
|v|^{p}
\mathrm{d}x,
\end{aligned}
\end{equation*}
\begin{equation*}
\begin{aligned}
\int_{\mathbb{R}^{N}}
|v^{*}|^{2^{*}}
\mathrm{d}x
=\int_{\mathbb{R}^{N}}
|v|^{2^{*}}
\mathrm{d}x.
\end{aligned}
\end{equation*}
This implies:
\begin{equation*}
\begin{aligned}
I_{2^{*}}(t|v^{*}|)
\leqslant
I_{2^{*}}(tv),
~~t>0.
\end{aligned}
\end{equation*}
Notice that $v\not\equiv0$. Then there exists $t_{1,v^{*}}>0$
such that $t_{1,v^{*}}v^{*}\in\mathcal{M}$. And
\begin{equation*}
\begin{aligned}
\|v^{*}\|^{2}_{D^{1,2}(\mathbb{R}^{N})}
+
\|v^{*}\|^{2}_{D^{s,2}(\mathbb{R}^{N})}
=
t_{1,v^{*}}^{2_{s}^{*}-2}
\int_{\mathbb{R}^{N}}
|v^{*}|^{2_{s}^{*}}
\mathrm{d}x
+
t_{1,v^{*}}^{p-2}
\beta
\int_{\mathbb{R}^{N}}
|v^{*}|^{p}
\mathrm{d}x
+t_{1,v^{*}}^{2^{*}-2}
\int_{\mathbb{R}^{N}}
|v^{*}|^{2^{*}}
\mathrm{d}x.
\end{aligned}
\end{equation*}
It follows from $v\in\mathcal{N}$ that
\begin{equation*}
\begin{aligned}
&\int_{\mathbb{R}^{N}}
|v^{*}|^{2_{s}^{*}}
\mathrm{d}x
+\beta
\int_{\mathbb{R}^{N}}
|v^{*}|^{p}
\mathrm{d}x
+\int_{\mathbb{R}^{N}}
|v^{*}|^{2^{*}}
\mathrm{d}x\\
=&\int_{\mathbb{R}^{N}}
|v|^{2_{s}^{*}}
\mathrm{d}x
+\beta
\int_{\mathbb{R}^{N}}
|v|^{p}
\mathrm{d}x
+\int_{\mathbb{R}^{N}}
|v|^{2^{*}}
\mathrm{d}x\\
=&\|v\|^{2}_{D^{1,2}(\mathbb{R}^{N})}
+
\|v\|^{2}_{D^{s,2}(\mathbb{R}^{N})}\\
\geqslant&
\|v^{*}\|^{2}_{D^{1,2}(\mathbb{R}^{N})}
+
\|v^{*}\|^{2}_{D^{s,2}(\mathbb{R}^{N})}\\
=&
t_{1,v^{*}}^{2_{s}^{*}-2}
\int_{\mathbb{R}^{N}}
|v^{*}|^{2_{s}^{*}}
\mathrm{d}x
+
t_{1,v^{*}}^{p-2}
\beta
\int_{\mathbb{R}^{N}}
|v^{*}|^{p}
\mathrm{d}x
+t_{1,v^{*}}^{2^{*}-2}
\int_{\mathbb{R}^{N}}
|v^{*}|^{2^{*}}
\mathrm{d}x,
\end{aligned}
\end{equation*}
which gives
\begin{equation*}
\begin{aligned}
t_{1,v^{*}}\in(0,1].
\end{aligned}
\end{equation*}
and
\begin{equation*}
\begin{aligned}
\bar{c}_{*}
=
J(t_{1,v^{*}}|v^{*}|)
\leqslant
J(t_{1,v^{*}}v)
\leqslant
\max_{t\geqslant0}
J(tv)
=J(v)
=\bar{c}_{*}.
\end{aligned}
\end{equation*}
Then we know that
$J(t_{1,v^{*}}v^{*})=c$.
From Lemma \ref{Lemma4.5} again, $t_{1,v^{*}}v^{*}$ is a radial ground state solution.
\end{proof}

\section{$L^{\infty}$ estimate}
In this section,
we present the proof of Theorem \ref{Theorem1.3}, i.e., the $L^{\infty}$ estimate of non-negative solution for equations \eqref{F}.
\begin{lemma}\label{Lemma6.1}
Let $N\geqslant 3$, $s\in(0,1)$ and $u\in X$ be non-negative. 
For each $L>2$, we define
\begin{equation*}
\begin{aligned}
u_{L}(x)=
\begin{cases}
u(x)&\mathrm{if}~u(x)\leqslant  L,\\
L&\mathrm{if}~u(x)> L.
\end{cases}
\end{aligned}
\end{equation*}
Then 
$\bar{u}_{L}=uu_{L}^{2(\mu-1)} \in X$ for $\mu>1$.
\end{lemma}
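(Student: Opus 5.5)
The plan is to write $\bar u_L=\varphi(u)$ with $\varphi(t)=t\,\min\{t,L\}^{2(\mu-1)}$ for $t\geqslant0$, extended by $\varphi(t)=0$ for $t<0$. This is irrelevant since $u\geqslant0$, but it makes $\varphi$ a function on all of $\mathbb{R}$. The key observation is that $\varphi$ is globally Lipschitz with $\varphi(0)=0$. Indeed, on $[0,L]$ we have $\varphi(t)=t^{2\mu-1}$, so $|\varphi'(t)|\leqslant(2\mu-1)L^{2(\mu-1)}$. On $(L,\infty)$ we have $\varphi(t)=L^{2(\mu-1)}t$, with slope $L^{2(\mu-1)}$. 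The two pieces agree at $t=L$, so $\varphi$ is continuous and piecewise $C^{1}$. Hence
\[
|\varphi(a)-\varphi(b)|\leqslant K|a-b|,\qquad K:=(2\mu-1)L^{2(\mu-1)},\quad a,b\in\mathbb{R}.
\]
Once this is established, membership in $X$ reduces to the standard fact that Lipschitz functions vanishing at $0$ act on both the local and the nonlocal Dirichlet seminorm.

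For the nonlocal part, I would plug the Lipschitz bound pointwise into the Gagliardo double integral:
\[
\int_{\mathbb{R}^{N}}\int_{\mathbb{R}^{N}}\frac{|\bar u_L(x)-\bar u_L(y)|^{2}}{|x-y|^{N+2s}}\,\mathrm{d}x\,\mathrm{d}y\leqslant K^{2}\|u\|_{D^{s,2}(\mathbb{R}^{N})}^{2}.
\]
For the local part, I would use the chain rule for Lipschitz, piecewise $C^1$ functions of Sobolev functions (Stampacchia). This gives $\nabla\bar u_L=\varphi'(u)\nabla u$ a.e., where $\nabla u=0$ a.e. on the level set $\{u=L\}$. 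It follows that $\|\nabla\bar u_L\|_{L^2}\leqslant K\|\nabla u\|_{L^2}$. Also $|\bar u_L|\leqslant L^{2(\mu-1)}u$, so $\bar u_L\in L^{2^{*}}\cap L^{2_s^*}$ by Lemma \ref{Lemma2.2}. Combining these, $\|\bar u_L\|_X\leqslant K\|u\|_X$.

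The main obstacle is that $X$ is defined as a completion of $C_0^\infty(\mathbb{R}^N)$, not as a set of functions with finite seminorm. So finiteness of the seminorm alone does not formally place $\bar u_L$ in $X$. To handle this, I would take $u_k\in C_0^\infty$ with $u_k\to u$ in $X$. By Lemma \ref{Lemma2.2}, this also gives convergence in $L^{2^*}$, and after passing to a subsequence, a.e. convergence. The functions $\varphi(u_k)$ are Lipschitz with compact support, so after mollification they lie in $X$, and they are bounded in $X$ by the estimate above. Since $X$ is a Hilbert space, a subsequence converges weakly in $X$. Its limit must equal $\bar u_L$ because $\varphi(u_k)\to\varphi(u)$ a.e. and in $L^{2^*}$, using $|\varphi(u_k)-\varphi(u)|\leqslant K|u_k-u|$. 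Since $X$ is weakly closed, $\bar u_L\in X$.

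Alternatively, one can identify $X$ with $\{v\in L^{2^*}:\|v\|_X<\infty\}$, using the standard density argument for $D^{1,2}(\mathbb{R}^N)$ together with Lemma \ref{Lemma2.1}. With that identification, the bound $\|\bar u_L\|_X\leqslant K\|u\|_X$ finishes the proof directly.
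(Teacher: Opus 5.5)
Your proof is correct. For the gradient it matches the paper, and for the nonlocal term it takes a genuinely different route.

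\textbf{Gradient term.} The paper expands $\nabla(uu_L^{2(\mu-1)})$ by the product rule, which is the same computation as your chain rule. Carried out correctly, it gives exactly your constant $(2\mu-1)^2L^{4(\mu-1)}=\bigl(1+4(\mu-1)+4(\mu-1)^2\bigr)L^{4(\mu-1)}$. The paper's stated bound $3L^{4(\mu-1)}$ drops the factors $4(\mu-1)^2$ and $4(\mu-1)$.

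\textbf{Gagliardo seminorm.} The paper splits $\mathbb{R}^N\times\mathbb{R}^N$ into $\{u\leqslant L\}^2$, $\{u>L\}^2$ and the mixed region, and treats each piece by separate algebra. Your single inequality $|\varphi(a)-\varphi(b)|\leqslant K|a-b|$ covers all three regions at once. This buys more than brevity.
\begin{itemize}
\item On $\{u\leqslant L\}^2$ the paper asserts $|a^{2\mu-1}-b^{2\mu-1}|\leqslant L^{2\mu-2}|a-b|$ on $[0,L]$. This fails for $a,b$ near $L$; the correct constant is your $(2\mu-1)L^{2\mu-2}$.
\item On the mixed region the paper bounds the integrand by $L^{4(\mu-1)}\bigl(u(x)^2+u(y)^2\bigr)$ and then replaces $u(x)^2+u(y)^2$ by $|u(x)-u(y)|^2$. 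For $u\geqslant0$ that replacement goes in the wrong direction, so the step does not close as written.
\end{itemize}
Your global Lipschitz bound repairs both.

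\textbf{Membership in $X$.} Your approximation argument also supplies something the paper omits entirely. The paper only checks that the two seminorms of $\bar u_L$ are finite, whereas $X$ is defined as a completion of $C_0^\infty(\mathbb{R}^N)$. Two small precisions are worth adding there:
\begin{itemize}
\item $\varphi(u_k)$ itself lies in $X$ because it is the $H^1$-limit of its own mollifications and $\|\cdot\|_X\leqslant C\|\cdot\|_{H^1(\mathbb{R}^N)}$.
\item Your extension $\varphi=0$ on $(-\infty,0)$ is not irrelevant, since the approximants $u_k$ may change sign.
\end{itemize}
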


\begin{proof}
For $0\leqslant u\in X$,
by a direct calculation, one has
\begin{equation*}
\begin{aligned}
&\int_{\mathbb{R}^{N}}
\left|\nabla\left(uu_{L}^{2(\mu-1)}\right)\right|^{2}
\mathrm{d}x\\
=&
\int_{\mathbb{R}^{N}}
\left|
u_{L}^{2(\mu-1)}\nabla u
+
u\nabla\left(u_{L}^{2(\mu-1)}\right)\right|^{2}
\mathrm{d}x\\
=&
\int_{\mathbb{R}^{N}}
\left|
u_{L}^{2(\mu-1)}\nabla u
+
2(\mu-1)u u_{L}^{2(\mu-1)-1}\nabla u_{L}\right|^{2}
\mathrm{d}x\\
=&
\int_{\mathbb{R}^{N}}
|u_{L}|^{4(\mu-1)}
|\nabla u|^{2}
\mathrm{d}x
+4(\mu-1)^{2}
\int_{\{u(x)\leqslant L\}}
|u|^{2} |u_{L}|^{4(\mu-1)-2}
|\nabla u_{L}|^{2}
\mathrm{d}x\\
&+4(\mu-1)
\int_{\{u(x)\leqslant L\}}
u u_{L}^{4(\mu-1)-1}\nabla u
\nabla u_{L}
\mathrm{d}x\\
\leqslant&
L^{4(\mu-1)}
\int_{\mathbb{R}^{N}}
|\nabla u|^{2}
\mathrm{d}x
+
L^{4(\mu-1)}
\int_{\{u(x)\leqslant L\}}
|\nabla u_{L}|^{2}
\mathrm{d}x
+
L^{4(\mu-1)}
\int_{\{u(x)\leqslant L\}}
\nabla u
\nabla u_{L}
\mathrm{d}x\\
=&
L^{4(\mu-1)}
\left[
\int_{\mathbb{R}^{N}}
|\nabla u|^{2}
\mathrm{d}x
+2
\int_{\{u(x)\leqslant L\}}
|\nabla u|^{2}
\mathrm{d}x
\right]\\
\leqslant&
3L^{4(\mu-1)}
\int_{\mathbb{R}^{N}}
|\nabla u|^{2}
\mathrm{d}x.
\end{aligned}
\end{equation*}
This shows $\bar{u}_{L}=uu_{L}^{2(\mu-1)} \in D^{1,2}(\mathbb{R}^{N})$.

We now prove $\bar{u}_{L}=uu_{L}^{2(\mu-1)} \in D^{s,2}(\mathbb{R}^{N})$ as follows
\begin{equation*}
\begin{aligned}
&\int_{\mathbb{R}^{N}}
\int_{\mathbb{R}^{N}}
\frac{|uu_{L}^{2(\mu-1)}(x)-uu_{L}^{2(\mu-1)}(y)|^{2}}{|x-y|^{N+2s}}
\mathrm{d}x
\mathrm{d}y
\\
=&
\int_{u(x)\leqslant L}
\int_{u(x)\leqslant L}
\frac{|uu_{L}^{2(\mu-1)}(x)-uu_{L}^{2(\mu-1)}(y)|^{2}}{|x-y|^{N+2s}}
\mathrm{d}x
\mathrm{d}y
\\
&+\int_{u(x)> L}
\int_{u(x)> L}
\frac{|uu_{L}^{2(\mu-1)}(x)-uu_{L}^{2(\mu-1)}(y)|^{2}}{|x-y|^{N+2s}}
\mathrm{d}x
\mathrm{d}y
\\
&+2\int_{u(x)\leqslant L}
\int_{u(x)>L}
\frac{|uu_{L}^{2(\mu-1)}(x)-uu_{L}^{2(\mu-1)}(y)|^{2}}{|x-y|^{N+2s}}
\mathrm{d}x
\mathrm{d}y\\
=:&A_{1}+A_{2}+2A_{3}.
\end{aligned}
\end{equation*}	
In order to compute $A_{1}$, we need
\begin{equation*}
\begin{aligned}
&|u^{2\mu-1}(x)-u^{2\mu-1}(y)|^{2}
-
|u(x)-u(y)|^{2}\\
=&
|u|^{2}(|u|^{4\mu-4}-1)(x)
-
2u(x)u(y)
(|u|^{2\mu-2}(x)|u|^{2\mu-2}(y)-1)
+
|u|^{2}
(|u|^{4\mu-4}-1)(y)\\
\leqslant&
(L^{4\mu-4}-1)
(|u|^{2}(x)-
2u(x)u(y)
+
|u|^{2}(y)),
\end{aligned}
\end{equation*}
which implies
\begin{equation*}
\begin{aligned}
&|u|^{2(2\mu-1)}(x)
-
2|u|^{2\mu-1}(x)|u|^{2\mu-1}(y)
+
u^{2(2\mu-1)}(y)\\
\leqslant&
L^{4\mu-4}|u|^{2}(x)
-
2L^{4\mu-4}u(x)u(y)
+
L^{4\mu-4}
|u|^{2}(y).
\end{aligned}
\end{equation*}
It can be divided into two situations: Case(i)
$u(x)\geqslant u(y)$; Case(ii) 
$u(x)\leqslant u(y)$.
We show Case (i) as follows
\begin{equation*}
\begin{aligned}
&|u|^{2}(x)[L^{4\mu-4}-|u|^{4\mu-4}(x)]
-
2u(x)u(y)
[u^{2\mu-2}(x)u^{2\mu-2}(y)-L^{4\mu-4}]
+
|u|^{2}(y)[L^{4\mu-4}-|u|^{4\mu-4}(y)]
\\
\geqslant&
|u|^{2}(x)[L^{4\mu-4}-|u|^{4\mu-4}(x)]
-
2u(x)u(y)
[u^{4\mu-4}(x)-L^{4\mu-4}]
+
|u|^{2}(y)[L^{4\mu-4}-|u|^{4\mu-4}(x)]
\\
=&
[L^{4\mu-4}-|u|^{4\mu-4}(x)]
(u(x)+u(y))^{2}\geqslant0.
\end{aligned}
\end{equation*}
Similarly, for  Case (ii),
\begin{equation*}
\begin{aligned}
&|u|^{2}(x)[L^{4\mu-4}-|u|^{4\mu-4}(x)]
-
2u(x)u(y)
[u^{2\mu-2}(x)u^{2\mu-2}(y)-L^{4\mu-4}]
+
|u|^{2}(y)[L^{4\mu-4}-|u|^{4\mu-4}(y)]
\\
\geqslant&
[L^{4\mu-4}-|u|^{4\mu-4}(y)]
(u(x)+u(y))^{2}\geqslant 0.
\end{aligned}
\end{equation*}
Combining the above two cases,
we have
\begin{equation}\label{6.1}
\begin{aligned}
A_{1}=&\int_{u(x)\leqslant L}
\int_{u(x)\leqslant L}
\frac{|uu_{L}^{2(\mu-1)}(x)-uu_{L}^{2(\mu-1)}(y)|^{2}}{|x-y|^{N+2s}}
\mathrm{d}x
\mathrm{d}y\\
=&
\int_{u(x)\leqslant L}
\int_{u(x)\leqslant L}
\frac{|u^{2\mu-1}(x)-u^{2\mu-1}(y)|^{2}}{|x-y|^{N+2s}}
\mathrm{d}x
\mathrm{d}y\\
=&
\int_{u(x)\leqslant L}
\int_{u(x)\leqslant L}
\frac{|u|^{2(2\mu-1)}(x)
-
2|u|^{2\mu-1}(x)|u|^{2\mu-1}(y)
+
u^{2(2\mu-1)}(y)}{|x-y|^{N+2s}}
\mathrm{d}x
\mathrm{d}y\\
\leqslant&
\bar{C}\int_{u(x)\leqslant L}
\int_{u(x)\leqslant L}
\frac{|u|^{2}(x)
-
2u(x)u(y)
+
|u|^{2}(y)}{|x-y|^{N+2s}}
\mathrm{d}x
\mathrm{d}y\\
=&
\bar{C}\int_{u(x)\leqslant L}
\int_{u(x)\leqslant L}
\frac{|u(x)-u(y)|^{2}}{|x-y|^{N+2s}}
\mathrm{d}x
\mathrm{d}y.
\end{aligned}
\end{equation}
It is easy to see $A_{2}$ that
\begin{equation}\label{6.2}
\begin{aligned}
A_{2}=&\int_{u(x)> L}
\int_{u(x)> L}
\frac{|uu_{L}^{2(\mu-1)}(x)-uu_{L}^{2(\mu-1)}(y)|^{2}}{|x-y|^{N+2s}}
\mathrm{d}x
\mathrm{d}y\\
=&
\int_{u(x)> L}
\int_{u(x)> L}
\frac{|uL^{2(\mu-1)}(x)-uL^{2(\mu-1)}(y)|^{2}}{|x-y|^{N+2s}}
\mathrm{d}x
\mathrm{d}y\\
=&
L^{4(\mu-1)}
\int_{u(x)> L}
\int_{u(x)> L}
\frac{|u(x)-u(y)|^{2}}{|x-y|^{N+2s}}
\mathrm{d}x
\mathrm{d}y.
\end{aligned}
\end{equation}
Clearly, for
$A_{3}$, we have
\begin{equation}\label{6.3}
\begin{aligned}
&\int_{u(y)\leqslant L}
\int_{u(x)>L}
\frac{|uu_{L}^{2(\mu-1)}(x)-uu_{L}^{2(\mu-1)}(y)|^{2}}{|x-y|^{N+2s}}
\mathrm{d}x
\mathrm{d}y\\
=&
\int_{u(y)\leqslant L}
\int_{u(x)> L}
\frac{|u|^{2}|u_{L}|^{4(\mu-1)}(x)
+
|u|^{2}|u_{L}|^{4(\mu-1)}(y)}{|x-y|^{N+2s}}
\mathrm{d}x
\mathrm{d}y\\
\leqslant&
L^{4(\mu-1)}
\int_{u(y)\leqslant L}
\int_{u(x)> L}
\frac{|u|^{2}(x)
+
|u|^{2}(y)}{|x-y|^{N+2s}}
\mathrm{d}x
\mathrm{d}y\\
=&
L^{4(\mu-1)}
\int_{u(y)\leqslant L}
\int_{u(x)> L}
\frac{|u(x)
-
u(y)|^{2}}{|x-y|^{N+2s}}
\mathrm{d}x
\mathrm{d}y.
\end{aligned}
\end{equation}
Using \eqref{6.1}-\eqref{6.3},
one knows that
\begin{equation*}
\begin{aligned}
&\int_{\mathbb{R}^{N}}
\int_{\mathbb{R}^{N}}
\frac{|uu_{L}^{2(\mu-1)}(x)-uu_{L}^{2(\mu-1)}(y)|^{2}}{|x-y|^{N+2s}}
\mathrm{d}x
\mathrm{d}y
\leqslant
C
\int_{\mathbb{R}^{N}}
\int_{\mathbb{R}^{N}}
\frac{|u(x)
-
u(y)|^{2}}{|x-y|^{N+2s}}
\mathrm{d}x
\mathrm{d}y.
\end{aligned}
\end{equation*}
This shows that $\bar{u}_{L}=uu_{L}^{2(\mu-1)} \in D^{s,2}(\mathbb{R}^{N})$.
\end{proof}

\begin{lemma}\label{Lemma6.2}
Let $N\geqslant 3$, $s\in(0,1)$,
$2_{s}^{*}<p<2^{*}$ and $u\in X$ be a non-negative non-trivial solution of equation \eqref{F}. 
Then we have
\begin{equation*}
\begin{aligned}
\|uu_{L}^{\mu-1}\|^{2}_{L^{t}(\mathbb{R}^{N})}
\leqslant
C\mu^{2}
\left[
\int_{\mathbb{R}^{N}}
|u|^{2_{s}^{*}}|u_{L}|^{2(\mu-1)}\mathrm{d}x
+\int_{\mathbb{R}^{N}}
|u|^{2^{*}}|u_{L}|^{2(\mu-1)}\mathrm{d}x
\right],
\end{aligned}
\end{equation*}
where $t\in [2_{s}^{*},2^{*}]$.
\end{lemma}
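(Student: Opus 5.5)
We test the equation against $\bar u_L=uu_L^{2(\mu-1)}$, which is admissible in $X$ by Lemma \ref{Lemma6.1}. This is the standard Moser iteration first step. The weak formulation of \eqref{F} gives
\begin{equation*}
\int_{\mathbb{R}^{N}}\nabla u\cdot\nabla \bar u_L\,\mathrm{d}x+\int_{\mathbb{R}^{N}}\int_{\mathbb{R}^{N}}\frac{(u(x)-u(y))(\bar u_L(x)-\bar u_L(y))}{|x-y|^{N+2s}}\mathrm{d}x\mathrm{d}y=\int_{\mathbb{R}^{N}}f(u)uu_L^{2(\mu-1)}\mathrm{d}x .
\end{equation*}
By $(F_1)$ and $u\geqslant0$, the right-hand side is at most
\begin{equation*}
C\int_{\mathbb{R}^{N}}\bigl(u^{2_{s}^{*}}+u^{2^{*}}\bigr)u_L^{2(\mu-1)}\mathrm{d}x,
\end{equation*}
which is exactly the bracket in the claim. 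So it remains to bound the left-hand side from below by $c\mu^{-2}\|uu_L^{\mu-1}\|_{X}^{2}$. Once that is done, Lemma \ref{Lemma2.2} (with $\|w\|_{L^t}^2\leqslant C\|w\|_X^2$ for $t\in[2_s^*,2^*]$) applied to $w=uu_L^{\mu-1}$ finishes the proof.

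\textbf{Local term.} Write $w=uu_L^{\mu-1}$. Then
\begin{equation*}
\nabla\bar u_L=u_L^{2(\mu-1)}\nabla u+2(\mu-1)uu_L^{2\mu-3}\nabla u_L,\qquad \nabla w=u_L^{\mu-1}\nabla u+(\mu-1)uu_L^{\mu-2}\nabla u_L .
\end{equation*}
Since $\nabla u_L=\nabla u$ on $\{u\leqslant L\}$ and vanishes elsewhere, and $u=u_L$ there, both expressions are explicit powers. On $\{u\leqslant L\}$ we get $\nabla u\cdot\nabla\bar u_L=(2\mu-1)u^{2\mu-2}|\nabla u|^2$ and $|\nabla w|^2=\mu^2u^{2\mu-2}|\nabla u|^2$. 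On $\{u>L\}$ we get $\nabla u\cdot\nabla\bar u_L=L^{2\mu-2}|\nabla u|^2=|\nabla w|^2$. Hence pointwise
\begin{equation*}
\nabla u\cdot\nabla\bar u_L\geqslant \mu^{-2}|\nabla w|^2,
\end{equation*}
because $(2\mu-1)\geqslant1$ and $\mu^2\leqslant\mu^2(2\mu-1)$.

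\textbf{Nonlocal term (main obstacle).} Here I would prove the elementary two-point inequality, for $a,b\geqslant0$,
\begin{equation*}
(a-b)\bigl(a a_L^{2(\mu-1)}-b b_L^{2(\mu-1)}\bigr)\geqslant \frac{1}{\mu^{2}}\bigl(a a_L^{\mu-1}-b b_L^{\mu-1}\bigr)^2 ,
\end{equation*}
where $a_L=\min\{a,L\}$ and similarly for $b$. The approach follows the standard device (as in Iannizzotto--Mosconi--Squassina type arguments). Set $\phi(t)=t\,t_L^{2(\mu-1)}$ and $\psi(t)=t\,t_L^{\mu-1}$. Then $\phi$ is nondecreasing and absolutely continuous, with $\phi'\geqslant\mu^{-2}(\psi')^2$ a.e. (the same computation as in the local term). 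For $a>b$, Cauchy--Schwarz gives
\begin{equation*}
(a-b)(\phi(a)-\phi(b))=\int_b^a1\,\mathrm{d}t\int_b^a\phi'\,\mathrm{d}t\geqslant\Bigl(\int_b^a\sqrt{\phi'}\,\mathrm{d}t\Bigr)^2\geqslant\mu^{-2}(\psi(a)-\psi(b))^2 .
\end{equation*}
The case $a<b$ is symmetric. Integrating this against the kernel $|x-y|^{-N-2s}$ yields the nonlocal lower bound $\mu^{-2}\|w\|_{D^{s,2}}^2$.

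Adding the two lower bounds gives $\mu^{-2}\|w\|_X^2\leqslant C\int(u^{2_s^*}+u^{2^*})u_L^{2(\mu-1)}\mathrm{d}x$. Finally, $w\in X$ follows as in Lemma \ref{Lemma6.1}, since $w$ is a Lipschitz-type function of $u$ on bounded ranges. Applying Lemma \ref{Lemma2.2} then gives the stated estimate for every $t\in[2_s^*,2^*]$. The only delicate point is checking $\phi'\geqslant\mu^{-2}(\psi')^2$ across the truncation level $t=L$, where both derivatives jump. Since the inequality holds on each side separately and the functions are absolutely continuous, this suffices.
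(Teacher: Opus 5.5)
Your proposal is correct and follows essentially the paper's route. Both proofs test with $\bar u_L=uu_L^{2(\mu-1)}$, use the Cauchy--Schwarz two-point inequality built on $\sqrt{\phi'}$ (which the paper calls ``Jensen''), and then apply Sobolev (Lemma~\ref{Lemma2.2}) and $(F_1)$. The only difference is where the factor $\mu^{-1}$ enters: the paper applies Sobolev to the auxiliary function $Y(u)=\int_0^u\sqrt{\phi'(\tau)}\,\mathrm{d}\tau$ and then uses the pointwise bound $Y(u)\geqslant\mu^{-1}uu_L^{\mu-1}$, whereas you compare derivatives, $\sqrt{\phi'}\geqslant\mu^{-1}\psi'$, which bounds the $X$-seminorm of $uu_L^{\mu-1}$ itself (a slightly stronger intermediate estimate) and makes $Y$ unnecessary.
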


\begin{proof}
Let $u$ be a non-negative solution of equation \eqref{F}.
Then
\begin{equation}\label{6.4}
\begin{aligned}
\int_{\mathbb{R}^{N}}
\nabla u
\nabla\phi
\mathrm{d}x
+
\int_{\mathbb{R}^{N}}
\int_{\mathbb{R}^{N}}
\frac{(u(x)-u(y))(\phi(x)-\phi(y))}{|x-y|^{N+2s}}
\mathrm{d}x
\mathrm{d}y
=\int_{\mathbb{R}^{N}}
f(u)\phi
\mathrm{d}x.
\end{aligned}
\end{equation}
From Lemma \ref{Lemma6.1}, we know
\begin{equation}\label{6.5}
\begin{aligned}
\int_{\mathbb{R}^{N}}
\nabla u
\nabla \bar{u}_{L}
\mathrm{d}x
+
\int_{\mathbb{R}^{N}}
\int_{\mathbb{R}^{N}}
\frac{(u(x)-u(y))
(\bar{u}_{L}(x)-\bar{u}_{L}(y))}{|x-y|^{N+2s}}
\mathrm{d}x
\mathrm{d}y
=
\int_{\mathbb{R}^{N}}
f(u)\bar{u}_{L}
\mathrm{d}x.
\end{aligned}
\end{equation}
Let us define
\begin{equation*}
\begin{aligned}
\Lambda
:=
\frac{|t|^{2}}{2}
~\mbox{and}~
Y(t)
:=
\int_{0}^{t}
(\bar{u}'_{L}(\tau))^{\frac{1}{2}}\mathrm{d}\tau.
\end{aligned}
\end{equation*}
It follows from  
$\bar{u}_{L}$ 
is an increasing function that
\begin{equation*}
\begin{aligned}
(a-b)(\bar{u}_{L}(a)-\bar{u}_{L}(b))\geqslant0
~\mbox{for any}~
a,b\in\mathbb{R}^{N}.
\end{aligned}
\end{equation*}
Using Jensen's inequality, we obtain
\begin{equation*}
\begin{aligned}
\Lambda'(a-b)
(\bar{u}_{L}(a)-\bar{u}_{L}(b))
\geqslant
|Y(a)-Y(b)|^{2}
~\mbox{for any}~
a,b\in\mathbb{R}^{N},
\end{aligned}
\end{equation*}
from which it follows that
\begin{equation*}
\begin{aligned}
|Y(u)(x)-Y(u)(y)|^{2}
\leqslant
(u(x)-u(y))
((uu_{L}^{2(\mu-1)})(x)
-(uu_{L}^{2(\mu-1)})(y)).
\end{aligned}
\end{equation*}
Note that 
$Y(u)\geqslant\frac{1}{\mu}uu_{L}^{\mu-1}$.
By Lemma \ref{Lemma2.3}, one has
\begin{equation}\label{6.6}
\begin{aligned}
&\int_{\mathbb{R}^{N}}
\nabla u
\nabla \bar{u}_{L}
\mathrm{d}x
+
\int_{\mathbb{R}^{N}}
\int_{\mathbb{R}^{N}}
\frac{(u(x)-u(y))
(\bar{u}_{L}(x)-\bar{u}_{L}(y))}{|x-y|^{N+2s}}
\mathrm{d}x
\mathrm{d}y\\
\geqslant&
\|Y(u)\|^{2}_{D^{1,2}(\mathbb{R}^{N})}
+
\|Y(u)\|^{2}_{D^{s,2}(\mathbb{R}^{N})}\\
\geqslant&
C
\|Y(u)\|^{2}_{L^{t}(\mathbb{R}^{N})}\\
\geqslant&
\frac{C}{\mu^{2}}\|uu_{L}^{\mu-1}\|^{2}_{L^{t}(\mathbb{R}^{N})}.
\end{aligned}
\end{equation}
By virtue of \eqref{6.5}, \eqref{6.6} and $(F_{1})$, we have
\begin{equation*}
\begin{aligned}
\|uu_{L}^{\mu-1}\|^{2}_{L^{t}(\mathbb{R}^{N})}
\leqslant&
C\mu^{2}
\left(\int_{\mathbb{R}^{N}}
\nabla u
\nabla \bar{u}_{L}
\mathrm{d}x
+
\int_{\mathbb{R}^{N}}
\int_{\mathbb{R}^{N}}
\frac{(u(x)-u(y))
(\bar{u}_{L}(x)-\bar{u}_{L}(y))}{|x-y|^{N+2s}}
\mathrm{d}x
\mathrm{d}y\right)\\
=&
C\mu^{2}
\int_{\mathbb{R}^{N}}
f(u)\bar{u}_{L}
\mathrm{d}x\\
\leqslant&C\mu^{2}
\left[
\int_{\mathbb{R}^{N}}
|u|^{2_{s}^{*}}|u_{L}|^{2(\mu-1)}\mathrm{d}x
+\int_{\mathbb{R}^{N}}
|u|^{2^{*}}|u_{L}|^{2(\mu-1)}\mathrm{d}x
\right].
\end{aligned}
\end{equation*}
The proof is completed.
\end{proof}

We are now ready to show the $L^{\infty}$ estimation of solution $u$.

\begin{proof}[The proof of Theorem \ref{Theorem1.3}] 
{\bf Step 1.}
From Lemma \ref{Lemma6.2}, we have
\begin{equation*}
\begin{aligned}
\|uu_{L}^{\mu-1}\|^{2}_{L^{2^{*}}(\mathbb{R}^{N})}
\leqslant
C\mu^{2}
\left[
\int_{\mathbb{R}^{N}}
|u|^{2_{s}^{*}}|u_{L}|^{2(\mu-1)}\mathrm{d}x
+
\int_{\mathbb{R}^{N}}
|u|^{2^{*}}|u_{L}|^{2(\mu-1)}\mathrm{d}x
\right].
\end{aligned}
\end{equation*}
Let $\bar{d}\in \mathbb{R}^{+}$ be chosen later.
By using H\"{o}lder's inequality,
we have
\begin{equation*}
\begin{aligned}
&\int_{\mathbb{R}^{N}}
|u|^{2^{*}-2}|uu_{L}^{\mu-1}|^{2}
\mathrm{d}x\\
\leqslant&
\int_{\{u(x)\leqslant \bar{d}\}}
|u|^{2^{*}-2}|uu_{L}^{\mu-1}|^{2}
\mathrm{d}x
+
\int_{\{u(x)>\bar{d}\}}
|u|^{2^{*}-2}|uu_{L}^{\mu-1}|^{2}
\mathrm{d}x\\
\leqslant&
\bar{d}^{2^{*}-2_{s}^{*}}
\int_{\{u(x)\leqslant \bar{d}\}}
|u|^{2_{s}^{*}-2}
|uu_{L}^{\mu-1}|^{2}
\mathrm{d}x
+
\int_{\{u(x)>\bar{d}\}}
|u|^{2^{*}-2}|uu_{L}^{\mu-1}|^{2}
\mathrm{d}x\\
\leqslant&
\bar{d}^{2^{*}-2_{s}^{*}}
\int_{\{u(x)\leqslant \bar{d}\}}
|u|^{2_{s}^{*}-2}
|uu_{L}^{\mu-1}|^{2}
\mathrm{d}x
+
\left(
\int_{\{u(x)>\bar{d}\}}
|u|^{2^{*}}
\mathrm{d}x
\right)^{\frac{2^{*}-2}{2^{*}}}
\left(
\int_{\{u(x)>\bar{d}\}}
|uu_{L}^{\mu-1}|^{2^{*}}
\mathrm{d}x
\right)^{\frac{2}{2^{*}}}.
\end{aligned}
\end{equation*}
We can fix $\bar{d}$ such that
\begin{equation*}
\begin{aligned}
\left(
\int_{\{u(x)>\bar{d}\}}
|u|^{2^{*}}
\mathrm{d}x
\right)^{\frac{2^{*}-2}{2^{*}}}
\leqslant
\frac{1}{2C\mu^{2}}.
\end{aligned}
\end{equation*}
Then we have
\begin{equation*}
\begin{aligned}
\|uu_{L}^{\mu-1}\|^{2}_{L^{2^{*}}(\mathbb{R}^{N})}
\leqslant
C\mu^{2}(1+\bar{d}^{2^{*}-2_{s}^{*}})
\int_{\mathbb{R}^{N}}
|u|^{2_{s}^{*}}|u_{L}|^{2(\mu-1)}\mathrm{d}x.
\end{aligned}
\end{equation*}
Taking the limit as
$L\rightarrow\infty$
in the above inequality,
we deduce
\begin{equation}\label{6.7}
\begin{aligned}
\left(
\int_{\mathbb{R}^{N}}
|u|^{2^{*}\mu}
\mathrm{d}x
\right)^{\frac{2}{2^{*}}}
\leqslant
C\mu^{2}(1+\bar{d}^{2^{*}-2_{s}^{*}})
\int_{\mathbb{R}^{N}}
|u|^{2_{s}^{*}+2\mu-2}\mathrm{d}x.
\end{aligned}
\end{equation}

{\bf Step 2.}
Let $\mu_{1}-1=\frac{2^{*}-2_{s}^{*}}{2}$. Then from \eqref{6.7}, we have
\begin{equation*}
\begin{aligned}
\left(
\int_{\mathbb{R}^{N}}
|u|^{2^{*}\mu_{1}}
\mathrm{d}x
\right)^{\frac{2}{2^{*}}}
\leqslant
C\mu_{1}^{2}(1+\bar{d}^{2^{*}-2_{s}^{*}})
\int_{\mathbb{R}^{N}}
|u|^{2^{*}}\mathrm{d}x<+\infty,
\end{aligned}
\end{equation*}
which gives 
\begin{equation*}
\begin{aligned}
\left(
1+
\int_{\mathbb{R}^{N}}
|u|^{2^{*}\mu_{1}}
\mathrm{d}x
\right)^{\frac{2}{2^{*}(\mu_{1}-1)}}<+\infty.
\end{aligned}
\end{equation*}

{\bf Step 3.} We choose $\mu_{2}-1=\frac{2^{*}}{2}(\mu_{1}-1)$. Then 
\begin{equation*}
\begin{aligned}
\left(
\int_{\mathbb{R}^{N}}
|u|^{2^{*}\mu_{2}}
\mathrm{d}x
\right)^{\frac{2}{2^{*}}}
\leqslant
C\mu_{2}^{2}
\left[
\int_{\mathbb{R}^{N}}
|u|^{2_{s}^{*}+2\mu_{2}-2}\mathrm{d}x
+
\int_{\mathbb{R}^{N}}
|u|^{2^{*}+2\mu_{2}-2}\mathrm{d}x
\right].
\end{aligned}
\end{equation*}
Let 
\begin{equation*}
\begin{aligned}
a=\frac{2^{*}(2^{*}-2_{s}^{*})}{2(\mu_{2}-1)},
~~
b=2_{s}^{*}+2\mu_{2}-2-a.
\end{aligned}
\end{equation*}
Then $\frac{2^{*}b}{2^{*}-a}=2^{*}+2\mu_{2}-2$. By applying Young's inequality, we have
\begin{equation*}
\begin{aligned}
\int_{\mathbb{R}^{N}}
|u|^{2_{s}^{*}+2\mu_{2}-2}\mathrm{d}x
\leqslant&
\frac{a}{2^{*}}
\int_{\mathbb{R}^{N}}
|u|^{2^{*}}\mathrm{d}x
+
\frac{2^{*}-a}{2^{*}}
\int_{\mathbb{R}^{N}}
|u|^{2^{*}+2\mu_{2}-2}\mathrm{d}x\\
\leqslant&
C\left(
1+
\int_{\mathbb{R}^{N}}
|u|^{2^{*}+2\mu_{2}-2}\mathrm{d}x\right).
\end{aligned}
\end{equation*}
Thus
\begin{equation*}
\begin{aligned}
\left(
\int_{\mathbb{R}^{N}}
|u|^{2^{*}\mu_{2}}
\mathrm{d}x
\right)^{\frac{2}{2^{*}}}
\leqslant
C\mu_{2}^{2}
\left(
1+
\int_{\mathbb{R}^{N}}
|u|^{2^{*}+2\mu_{2}-2}\mathrm{d}x\right).
\end{aligned}
\end{equation*}
Using $\mu_{2}>1$, we deduce
\begin{equation*}
\begin{aligned}
\left(1+
\int_{\mathbb{R}^{N}}
|u|^{2^{*}\mu_{2}}
\mathrm{d}x
\right)^{\frac{2}{2^{*}}}
\leqslant&
1+\left(
\int_{\mathbb{R}^{N}}
|u|^{2^{*}\mu_{2}}
\mathrm{d}x
\right)^{\frac{2}{2^{*}}}\\
\leqslant&
C\mu_{2}^{2}
\left(
1+
\int_{\mathbb{R}^{N}}
|u|^{2^{*}+2\mu_{2}-2}\mathrm{d}x\right).
\end{aligned}
\end{equation*}
Then 
\begin{equation*}
\begin{aligned}
\left(1+
\int_{\mathbb{R}^{N}}
|u|^{2^{*}\mu_{2}}
\mathrm{d}x
\right)^{\frac{2}{2^{*}(\mu_{2}-1)}}
\leqslant&
(C\mu_{2}^{2})^{\frac{1}{\mu_{2}-1}}
\left(
1+
\int_{\mathbb{R}^{N}}
|u|^{2^{*}+2\mu_{2}-2}\mathrm{d}x\right)^{\frac{1}{\mu_{2}-1}}\\
\leqslant&(C\mu_{2})^{\frac{2}{\mu_{2}-1}}
\left(
1+
\int_{\mathbb{R}^{N}}
|u|^{2^{*}\mu_{1}}\mathrm{d}x\right)^{\frac{2}{2^{*}(\mu_{1}-1)}}
\end{aligned}
\end{equation*}

{\bf Step 4.}
Iterating this process and recalling that
\begin{equation*}
\begin{aligned}
\mu_{i+1}-1=\frac{2^{*}}{2}(\mu_{i}-1),
\end{aligned}
\end{equation*}
and
\begin{equation*}
\begin{aligned}
\left(1+
\int_{\mathbb{R}^{N}}
|u|^{2^{*}\mu_{i+1}}
\mathrm{d}x
\right)^{\frac{2}{2^{*}(\mu_{i+1}-1)}}
\leqslant&
(C\mu_{i+1})^{\frac{2}{\mu_{i+1}-1}}
\left(
1+
\int_{\mathbb{R}^{N}}
|u|^{2^{*}\mu_{i}}\mathrm{d}x\right)^{\frac{2}{2^{*}(\mu_{i}-1)}}.
\end{aligned}
\end{equation*}
Denoting $C_{i+1}:=C\mu_{i+1}$ and $K_{i}=\left(
1+
\int_{\mathbb{R}^{N}}
|u|^{2^{*}\mu_{i}}\mathrm{d}x\right)^{\frac{2}{2^{*}(\mu_{i}-1)}}$, we know that there exists a constant $C > 0$ independent of $i$ such that
\begin{equation*}
\begin{aligned}
K_{m+1}\leqslant \prod_{i=1}^{m}
C_{i+1}^{\frac{2}{\mu_{i+1}-1}}
K_{1}\leqslant CK_{1}<+\infty.
\end{aligned}
\end{equation*}
Therefore,
\begin{equation*}
\begin{aligned}
\|u\|_{L^{\infty}(\mathbb{R}^{N})}\leqslant CK_{1}<+\infty.
\end{aligned}
\end{equation*}
\end{proof}

\section*{Conflict of interest}
The author declares that he has no known competing financial interests or personal relationships that could have appeared to influence the work reported in
this article.

\section*{Data availability statement}
No data were used for the research described in the article.

\end{document}